\newcommand{\df}{\ensuremath{\partial}}
\DeclareMathOperator{\image}{im}
\def\TM+{T^*(\rr_+ \times M)}
\newcommand{\cc}{\ensuremath{\mathbb{C}}}
\newcommand{\rr}{\ensuremath{\mathbb{R}}}
\newcommand{\nn}{\ensuremath{\mathbb{N}}}
\newcommand{\ff}{\ensuremath{\mathbb{F}}}
\theoremstyle{plain}
\newtheorem{thm}{Theorem}[section]
\newtheorem{cor}[thm]{Corollary}
\newtheorem{lem}[thm]{Lemma}
\newtheorem{prop}[thm]{Proposition}
\newtheorem{ques}[thm]{Question}
\theoremstyle{definition}
\newtheorem{defn}[thm]{Definition}
\theoremstyle{remark}
\newtheorem{rem}[thm]{Remark}
\newtheorem{ex}[thm]{Example}
\numberwithin{equation}{section} 
\newcommand{\dfn}[1]{{\textbf {#1}}}
\newcommand{\leg}{\ensuremath{\Lambda}}
\def\a{\mathfrak{a}}
\begin{document}

\title[The Minimal  Length of a Lagrangian Cobordism]{The Minimal Length of a Lagrangian Cobordism between Legendrians}

\date{\today}

\author[J. Sabloff]{Joshua M. Sabloff} \address{Haverford College,
Haverford, PA 19041} \email{jsabloff@haverford.edu} \thanks{JS is
partially supported by NSF grant DMS-1406093. LT gratefully acknowledges the hospitality of
  the Institute for Advanced Study in Princeton and support at IAS  from The Fund for Mathematics during a portion of this work.}

\author[L. Traynor]{Lisa Traynor} \address{Bryn Mawr College, Bryn
Mawr, PA 19010} \email{ltraynor@brynmawr.edu} 

\begin{abstract}  
To investigate the rigidity and flexibility of Lagrangian cobordisms between Legendrian submanifolds, we investigate the minimal length of such a cobordism, which is a $1$-dimensional measurement of the non-cylindrical portion of the cobordism.  Our primary tool is a set of real-valued capacities for a Legendrian submanifold, which are derived from a filtered version of Legendrian Contact Homology.  Relationships between capacities of Legendrians at the ends of a Lagrangian cobordism yield lower bounds on the length of the cobordism.  We apply the capacities to Lagrangian cobordisms realizing vertical dilations (which may be arbitrarily short) and contractions (whose lengths are bounded below).  We also study the interaction between length and the linking of multiple cobordisms as well as the lengths of cobordisms derived from non-trivial loops of Legendrian isotopies.
\end{abstract}

\maketitle

\section{Introduction}
\label{sec:intro}

\subsection{Context and Motivation}  This paper introduces and addresses new quantitative questions about
Lagrangian cobordisms between Legendrian submanifolds.  
The cobordisms under consideration are exact, orientable Lagrangian submanifolds of the symplectization $\rr \times Y$ of a contact manifold $Y$ that in the 
complement of $[s_-, s_+] \times Y$ coincide with cylinders over Legendrians $\leg_\pm$.  
This type of Lagrangian cobordism has come under increasing scrutiny in recent years,  especially as a part of the TQFT package for a relative version of Symplectic Field Theory; see, among others, \cite{chantraine, rizell:surgery, ekholm:rsft, ekholm:lagr-cob,  ehk:leg-knot-lagr-cob}.  

Prior research into Lagrangian cobordisms investigated \emph{qualitative} questions:   fixing the Legendrians at the ends, when does such a cobordism exist?  What restrictions are there on the topology of such a cobordism?     Is the Lagrangian cobordism relation a partial order on the set of Legendrian submanifolds (up to isotopy)? 

For another viewpoint on  the rigidity and flexibility of Lagrangian cobordisms, we introduce a new \emph{quantitative} question: given two Legendrians, what is the shortest length of all Lagrangian cobordisms between them?
Here, the length of a  Lagrangian cobordism $L$, $\ell(L)$, is the infimum of the lengths of the intervals $[s_-, s_+]$ 
   such that the Lagrangian is cylindrical in the complement of $[s_-, s_+] \times Y$.
  In Section~\ref{ssec:results}, we will describe examples that show that for some pairs of Legendrian submanifolds, Lagrangian cobordisms are flexible in the sense that they can be
arbitrarily short;  for other pairs of Legendrians submanifolds, Lagrangian cobordisms are rigid in that there is a positive lower bound
to their length.

Rigidity of length brings to mind non-squeezing and more general symplectic embedding phenomena.  
That is, even when topological and volume obstructions to embedding one symplectic domain into another vanish, the embedding may still be impossible.  The original example was of embedding a ball into a cylinder as in \cite{gromov:hol}, though other classic problems involve ellipsoids and polydisks, and, more generally, embeddings of one Liouville domain into another as in \cite{hutchings:ech-cap}.  In the last case, an embedding induces a symplectic cobordism between contact boundaries of the domains in question, and the question of measuring Lagrangian cobordisms between Legendrians can be thought of as a relative version of this question.  The notion of length, however, does not seem to have an analogue in the non-relative case. In fact, an interesting feature of the length is that it arises from a $1$-dimensional measurement rather than the usual $2$-dimensional measurements that appear in non-squeezing theorems, even in the case of Lagrangian cobordisms \cite{josh-lisa:cap}.

There is one more respect in which our results are reminiscent of non-squeezing: Gromov's use of pseudo-holomorphic curves has been refined and given algebraic structure in several contexts, the most relevant for our purposes being Legendrian Contact Homology (LCH) and Symplectic Field Theory (SFT) \cite{egh}.  Using a framework reminiscent of, though independent of, Hutchings' definition of Embedded Contact Homology capacities \cite{hutchings:ech-cap}, our main tool for measuring rigidity in the length of a Lagrangian cobordism is a notion of a capacity for a Legendrian submanifold, which is  based on a filtered version of LCH, and which we will begin to describe in Section~\ref{ssec:tools}, below.

\subsection{Results}
\label{ssec:results}

To build intuition for the length of a Lagrangian cobordism, we introduce a series of examples, each of which will be explored in more detail in Sections~\ref{sec:constructions} and \ref{sec:applications}.  First, consider the Legendrian unknot $U(1) \subset \rr^3$ depicted in Figure~\ref{fig:unknot}; for this knot, the unique Reeb chord has height $1$. Let $U(v)$ denote the image of $U(1)$ under the contact diffeomorphism $(x,y,z) \mapsto (x,vy,vz)$.  Through spinning constructions, we can produce $n$-dimensional flying saucers $U^n(v)$.
 
\begin{figure}
	\labellist
	\small
	\pinlabel $1$ [r] at 45 35
	\endlabellist

\centerline{\includegraphics{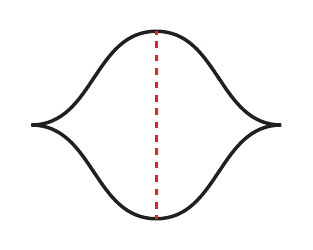}}
\caption{A front diagram of the Legendrian unknot $U(1)$, which has a single Reeb chord of height $1$.  By spinning this front around its central axis, one obtains similar ``flying saucers'', $U^n(1)$, in $\rr^{2n+1}$.}
\label{fig:unknot}
\end{figure}

\begin{thm} \label{thm:unknot-dilation}  For $v \geq 1$, there exist arbitrarily short Lagrangian cobordisms from $\leg_- = U^n(1)$ to $\leg_+ = U^n(v)$.  For $v < 1$, any such cobordism must have length at least $\ln \frac{1}{v}$, and the bound is the best possible.
\end{thm}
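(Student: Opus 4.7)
The plan is to split the theorem into an upper-bound construction and a lower-bound capacity argument.

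For the construction, observe that the dilation $\phi_\tau(x, y, z) = (x, \tau y, \tau z)$ is a contact diffeomorphism of $(\rr^{2n+1}, \alpha)$ sending $U^n(1)$ to $U^n(\tau)$ and satisfying $\phi_\tau^* \alpha = \tau \alpha$.  It therefore lifts to a symplectomorphism of the symplectization $(\rr \times \rr^{2n+1}, d(e^s \alpha))$ via $(s, p) \mapsto (s - \ln \tau, \phi_\tau(p))$.  Suspending the family $\{U^n(\tau)\}$ along these lifts and smoothly interpolating to cylindrical ends yields an exact Lagrangian cobordism from $U^n(1)$ to $U^n(v)$ whose minimal $s$-extent is governed by the net vertical shift $\ln(1/v)$.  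For $v \geq 1$ this shift is non-positive, may be absorbed into the cylindrical ends, and the resulting cobordism can be made arbitrarily short.  For $v < 1$ the shift is exactly $\ln(1/v)$, and the construction produces cobordisms of length $\ln(1/v) + \varepsilon$ for every $\varepsilon > 0$, establishing sharpness.

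For the lower bound when $v < 1$, I would apply the filtered LCH capacity formalism.  A direct LCH computation---using that the spun unknot has, after a small perturbation of its Morse-Bott Reeb chords, a distinguished generator whose action is $v$---should give $c(U^n(v)) = v$.  The key structural ingredient is a capacity monotonicity inequality
\[
c(\leg_+) \;\geq\; e^{-\ell(L)}\, c(\leg_-)
\]
for any exact Lagrangian cobordism $L$ from $\leg_-$ to $\leg_+$.  This follows from the fact that a pseudo-holomorphic disk contributing to the cobordism chain map has non-negative symplectic area with respect to $d(e^s \alpha)$; Stokes' theorem translates this non-negativity into an inequality on the Reeb chord actions at the positive and negative punctures of the disk, weighted by the Liouville factors $e^{s_+}$ and $e^{s_-}$.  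Substituting $c(U^n(1)) = 1$ and $c(U^n(v)) = v$ yields $\ell(L) \geq \ln(1/v)$.

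The main obstacle is proving the capacity monotonicity with the precise $e^{-\ell}$ factor and confirming that the distinguished classes realizing the capacities of $U^n(1)$ and $U^n(v)$ are genuinely related by the cobordism chain map.  This amounts to tracking the interaction between the intrinsic action filtration on the LCH chain complex and the Liouville weights $e^{s_\pm}$ at the cylindrical ends.  The LCH computation for the flying saucer itself and the identification of its capacity should be comparatively routine once the Morse-Bott degeneracy of its Reeb chords is resolved by a standard perturbation.
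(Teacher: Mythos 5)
Your proposal mirrors the paper's split into a construction step and a capacity lower bound, and the lower-bound sketch matches the route actually taken (Theorem~\ref{thm:mono} plus Proposition~\ref{prop:chord-diff}). The genuine gap is in the construction. Suspending the family $\{U^n(\tau)\}$ ``along these lifts'' does not produce a Lagrangian: the lifted symplectomorphism $F_\tau(s,p)=(s-\ln\tau,\phi_\tau(p))$ carries the whole cylinder $\rr\times U^n(1)$ to the whole cylinder $\rr\times U^n(\tau)$, so there is nothing to glue, and the trace $\{(s,\lambda_s(t))\}$ of the scaling isotopy is Lagrangian only if $\eta(s,t)=\alpha(\partial_s\lambda_s(t))=\rho'(s)z(t)$ is constant in $t$, which fails precisely because $U^n(1)$ has a Reeb chord of positive height. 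One needs the EHK correction (Lemma~\ref{lem:isotopy-cobord}), which adds $\eta$ to the $z$-coordinate, and then the embeddedness criterion of Lemma~\ref{lem:reeb-cobord}: no Reeb chord of $\leg_s$ may have height $\eta(s,t)-\eta(s,t')$. It is only when you run this through (Proposition~\ref{prop:scaling}) that the constraint $\frac{d}{ds}\bigl(e^s\rho(s)\bigr)>0$ appears and forces the interval length $A$ to exceed $\ln\frac{1}{v}$; the ``net Liouville shift'' heuristic gets the right number but is not a proof that such an embedded Lagrangian exists or that its length can approach $\ln\frac{1}{v}$.

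Two smaller points on the lower bound. First, the naive $\int u^*d(e^s\alpha)$ is not the right energy since it diverges on the cylindrical ends; the paper uses a cutoff weight $\varphi$ that is $e^{s_\pm}$ outside $[s_-,s_+]$, and the positivity of that $L$-energy (Lemma~\ref{lem:disk-energy}) gives the action inequality you want. Second, the non-vanishing of the cobordism map on the distinguished class is not automatic --- the paper establishes it via the fundamental class and the duality exact sequence in Proposition~\ref{prop:non-ker}, so this piece you flagged as an obstacle is a real ingredient, not a formality. Also, a minor correction: $U^n(v)$ obtained by spinning around the central axis already has a single non-degenerate Reeb chord at the axis, so no Morse--Bott perturbation is required.
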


This theorem points to some initial intuition about the length of a cobordism:  ``expanding'' a Legendrian can be done with a cobordism of arbitrarily short length, while ``shrinking'' a Legendrian requires nontrivial length.  In fact, the lower bound in Theorem~\ref{thm:unknot-dilation} can be generalized  
to give a lower bound to the length of a Lagrangian cobordism in terms of the lengths of Reeb chords of $\leg_-$ and $\leg_+$; see Proposition~\ref{prop:chord-diff}.

Second, it is easy to show that a vertical shift of a Legendrian unknot --- or any Legendrian submanifold of $J^1M$ --- can be achieved by arbitrarily short Lagrangian cobordisms; see Corollary~\ref{cor:vert-shift}. 
 On the other hand,  simultaneous shifts of two components cannot necessarily be achieved by disjoint Lagrangian cobordisms of arbitrarily short length.
 For example, if $H(v)$ is the Hopf link of Legendrian unknots $U(1)$ with the top component shifted up by $0<v<1$ from the bottom component as in Figure~\ref{fig:hopf}, then we obtain:

\begin{figure}
	\labellist
	\small
	\pinlabel $v$ [l] at 41 121
	\pinlabel $u$ [l] at 41 18
	\pinlabel $v$ [l] at 330 38
	\pinlabel $\ell$ [b] at 163 150
	\pinlabel $(a)$ [b] at 41 -7
	\pinlabel $(b)$ [b] at 240 -7
	\endlabellist
\centerline{\includegraphics{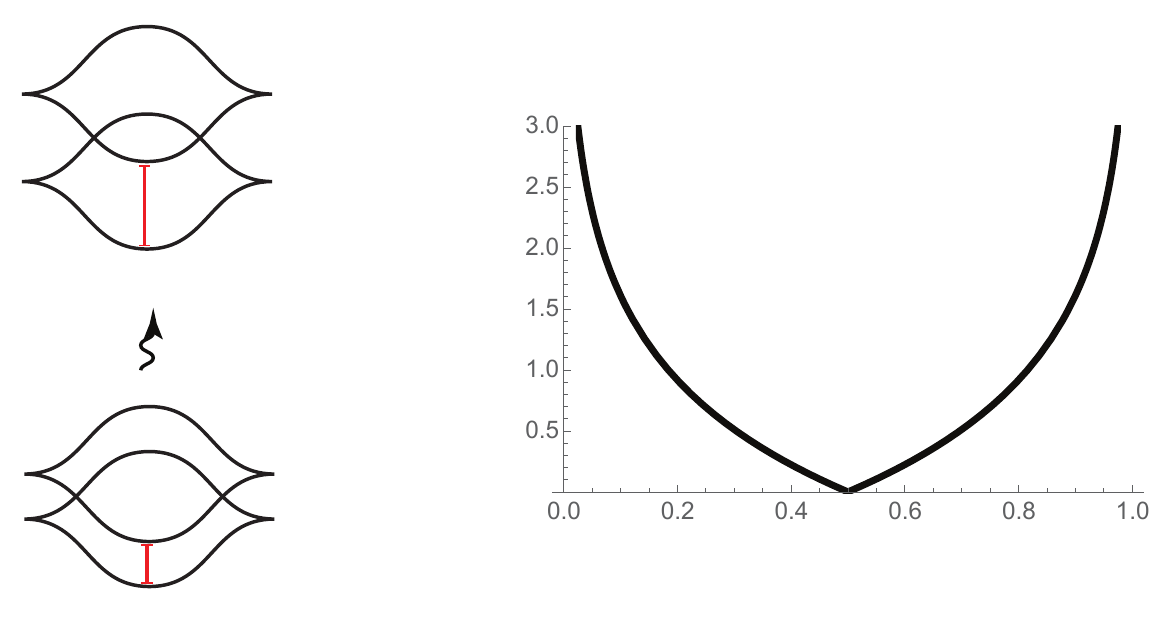}}
\caption{(a) A schematic picture of a Lagrangian cobordism between $\leg_- = H(u)$  and $\leg_+ = H(v)$, and (b) a plot of the lower bounds on the length of such a cobordism for $u = \frac{1}{2}$ and $0 < v < 1$.  }
\label{fig:hopf}
\end{figure}

\begin{thm} \label{thm:hopf}
	A Lagrangian cobordism from $\leg_- = H(u)$ to $\leg_+ = H(v)$ composed of two disjoint Lagrangian cylinders that join the upper (resp. lower) component of $H(u)$ to the upper (resp. lower) component of $H(v)$ has length at least 
	\begin{itemize}
	\item $\ln \frac{1-u}{1-v}$ if $u\leq v$ and
	\item $\ln \frac{u}{v}$ if $u \geq v$,
	\end{itemize} 
	and the bounds are the best possible.
\end{thm}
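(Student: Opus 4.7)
The plan is to derive the two lower bounds from Proposition~\ref{prop:chord-diff} by matching mixed Reeb chords at the two ends of the cobordism, and then to construct explicit disjoint cobordisms that realize these bounds. First I would enumerate the Reeb chords of $H(w)$: beyond the two self-Reeb chords of height $1$, one per unknot component, a direct front-diagram computation should show that the Hopf link has two \emph{mixed} Reeb chords running between the components, of heights $w$ and $1-w$, corresponding respectively to the ``small'' vertical gap created by the shift and the ``complementary'' gap determined by the common unknot height $1$.

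Next, since the cobordism $L = L_{\mathrm{top}} \sqcup L_{\mathrm{bot}}$ consists of two disjoint cylinders joining corresponding components, the induced cobordism map on filtered LCH must pair mixed Reeb chord classes at the two ends in the natural way, sending the $u$-chord class to the $v$-chord class and the $(1-u)$-chord class to the $(1-v)$-chord class. Applying Proposition~\ref{prop:chord-diff} to whichever pair strictly decreases in height from $\leg_-$ to $\leg_+$ then yields
\[
\ell(L) \geq \ln \frac{u}{v} \text{ when } u \geq v, \qquad \ell(L) \geq \ln \frac{1-u}{1-v} \text{ when } u \leq v.
\]

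For sharpness, I would construct explicit disjoint cylindrical cobordisms. In the $u \geq v$ case, I would leave the bottom component as a trivial cylinder and contract the upper component's vertical displacement from $u$ down to $v$ by adapting the contraction construction in Theorem~\ref{thm:unknot-dilation}; in the $u \leq v$ case, a symmetric construction contracts the complementary displacement from $1-u$ down to $1-v$. A careful choice of profile in each case should keep the cylinders disjoint while pushing the total length arbitrarily close to the stated bound. The main obstacle lies in the second step: identifying capacities of $H(u)$ and $H(v)$ realized by the mixed Reeb chords and verifying that the cobordism map respects them, so that Proposition~\ref{prop:chord-diff} can actually be invoked on the correct pair. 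A secondary challenge is carrying out the explicit constructions to sharpness while preserving disjointness of the two cobordism cylinders.
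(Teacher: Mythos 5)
Your overall strategy is close, but there are two concrete problems with the obstruction step. First, Proposition~\ref{prop:chord-diff} cannot be invoked in the way you suggest: that proposition requires $\leg_\pm$ to be \emph{connected} Legendrians and the bounds $u$ and $v$ are constraints on the heights of \emph{all} Reeb chords of $\leg_-$ and $\leg_+$, respectively. It does not apply to a pair of mixed chords of a link, and the Hopf link has Reeb chords of wildly varying heights (self-chords of height $1$, mixed chords of heights $u$, $u$, $1-u$, and $1+u$), so there is no single pair $(u,v)$ of extremal heights to feed into Proposition~\ref{prop:chord-diff}. The correct tool is Theorem~\ref{thm:mono} applied directly to specific cohomology classes. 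To identify those classes you must locate nonzero elements of $LCH^*(H(u),\varepsilon_u)$ corresponding to the chords of heights $u$ and $1-u$; the paper does this via Mishachev's homotopy splitting of the complex by ordered pairs of link components. You must then verify that the cobordism map $\Psi_1^{L,\varepsilon_-}$ carries $[\gamma_u]\mapsto [\gamma_v]$ and $[\delta_{1-u}]\mapsto[\delta_{1-v}]$; this follows because the cobordism (being two disjoint cylinders on corresponding components) preserves Mishachev's splitting, combined with nonvanishing via Proposition~\ref{prop:non-ker}. Your remark that this is ``the main obstacle'' is right, but the tool you reach for does not do the job.

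Second, your enumeration of mixed Reeb chords is incomplete: there are four, not two (two of height $u$ plus one each of heights $1-u$ and $1+u$, in addition to the two self-chords of height $1$, giving six total). This matters for the sharpness construction: Lemma~\ref{lem:reeb-cobord} requires that $\eta(s,t)-\eta(s,t')$ avoid the heights of \emph{all} Reeb chords of $\leg_s$, so you must check the embeddedness condition against all six chord families along the interpolating isotopy. Your proposed construction (leave the bottom component cylindrical and shift the top by a profile $\rho(s)$) is essentially correct, but carrying it out requires translating each of the four mixed-chord conditions into a constraint of the form ``$e^s\rho(s)$ is monotone'' or ``$e^s(\rho(s)\pm1)$ is monotone,'' and observing that the two nontrivial constraints are exactly those coming from the heights $\rho(s)$ and $1-\rho(s)$, which reproduce the claimed bounds, while the height-$(1+\rho(s))$ condition imposes no new restriction. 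Without verifying all four, you have not shown the immersion is an embedding.
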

\noindent
See Figure~\ref{fig:hopf}(b) for a visualization of these bounds for $u = \frac{1}{2}$.  In fact, this statement holds for $n$-dimensional Hopf links;
see Theorem~\ref{thm:n-dim-hopf}.

Theorem~\ref{thm:hopf} hints at a boundary-value problem for Lagrangian cobordisms. Generalizing the notation for the Hopf link, let $H(v_1, \ldots, v_k)$ denote the $(k+1)$-copy of $U(1)$, with components numbered from $0$ to $k$, with the $i^{th}$ component shifted up by a height  $v_i$ from the bottom, and with the shifts satisfying $0 < v_1 < \cdots < v_k < 1$.  We call $H\left(\frac{1}{k+1}, \ldots, \frac{k}{k+1}\right)$ the \dfn{evenly shifted $(k+1)$-copy} of $U(1)$.

\begin{ques} \label{ques:packing}
	Let $H$ denote the evenly shifted $(k+1)$-copy of $U(1)$. For which vectors $(v_1, \ldots, v_k)$ is there a Lagrangian cobordism 
	from $\leg_- = H$ to $\leg_+ = H(v_1, \ldots, v_k)$  of length at most $1$?
	\end{ques}

\begin{figure}
	\labellist
	\small
	\pinlabel $1$ [r] at 90 7
	\tiny 
	\pinlabel $\frac{1}{4}$ [l] at 98 31
	\pinlabel $\frac{1}{4}$ [l] at 107 41
	\pinlabel $\frac{1}{4}$ [l] at 116 50
	\pinlabel $v_1$ [l] at 242 100
	\pinlabel $v_2$ [l] at 251 107
	\pinlabel $v_3$ [l] at 260 121
	\endlabellist
	\centerline{\includegraphics{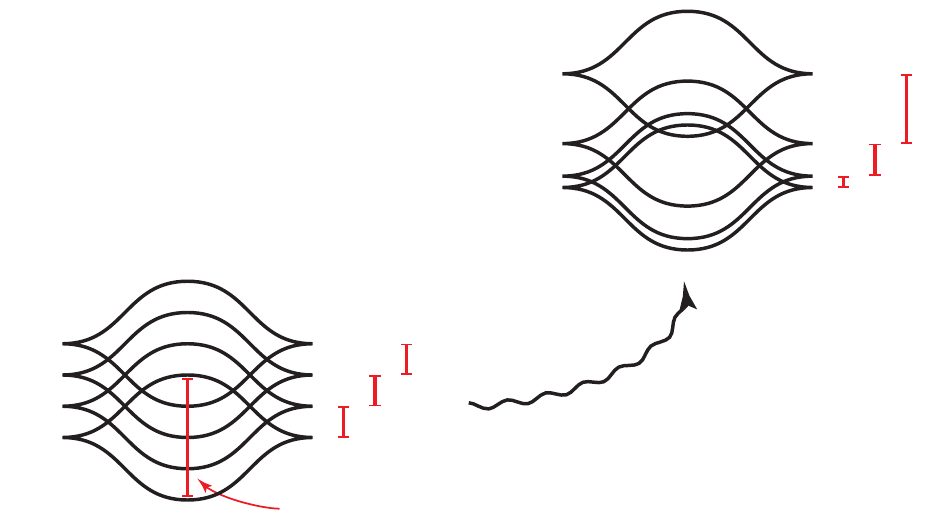}}
	\caption{The setup for the packing problem introduced in Question~\ref{ques:packing}.}
	\label{fig:packing}
\end{figure}
\noindent
See Figure~\ref{fig:packing}.  We will answer this question in Theorem~\ref{thm:packing} by showing that a Lagrangian cobordism exists for 
 $(v_1, \dots, v_k)$ if and only if the following system of linear inequalities are satisfied:
 \begin{equation} \label{eq:pack-ineq}
 \frac{i-j}{(k+1)e} \leq v_i-v_j \leq 1- \frac{(k+1) - (i-j)}{(k+1)e},
 \end{equation}
	where $i\in \{1, \ldots, k\}$, $j \in \{0, \ldots, k-1\}$, $i>j$, and $v_0=0$.
In the case of the $3$-copy, these inequalities are visualized in Figure~\ref{fig:packing-ineq}.
\begin{figure}
		\labellist
		\small
		\pinlabel $v_1$ [l] at 163 17
		\pinlabel $v_2$ [b] at 17 163
		\pinlabel $\frac{2}{3e}$ [t] at 54 15
		\pinlabel $\frac{2}{3e}$ [r] at 15 54
		\pinlabel $\frac{1}{3e}$ [t] at 35 15
		\pinlabel $\frac{1}{3e}$ [r] at 15 35
		\pinlabel $\frac{3e-1}{3e}$ [t] at 145 15
		\pinlabel $\frac{3e-1}{3e}$ [r] at 15 145
		\pinlabel $\frac{3e-2}{3e}$ [t] at 126 15
		\pinlabel $\frac{3e-2}{3e}$ [r] at 15 126
		\endlabellist
	\centerline{\includegraphics{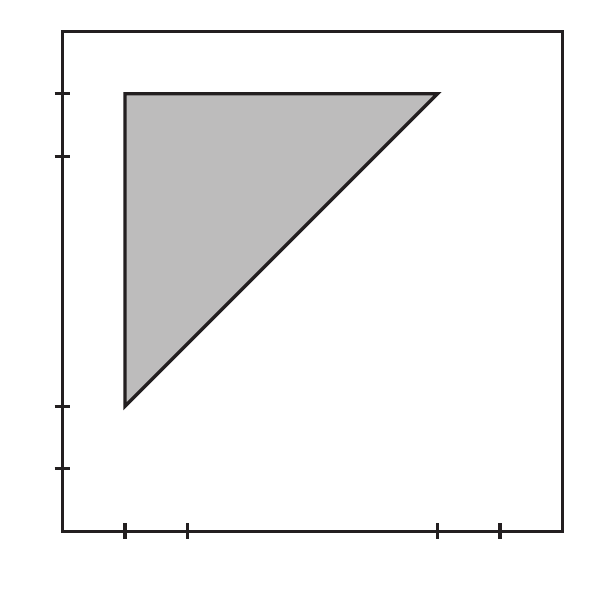}}
	\caption{The shaded region depicts the realizable shifts $v_1$, $v_2$ of the top two components of a $3$-copy as specified by Inequalities~(\ref{eq:pack-ineq}).}
	\label{fig:packing-ineq}
\end{figure}

Finally, it is well known (see \cite{chantraine} and also \cite{bst:construct, ehk:leg-knot-lagr-cob, eg:finite-dim, golovko:tb}) that a Legendrian isotopy induces a Lagrangian cobordism. We consider Lagrangian cobordisms derived from loops of Legendrian submanifolds, i.e.\ isotopies that begin and end at the same Legendrian submanifold.  Unlike in the previous examples,  we
will now restrict attention to a given isotopy class of Lagrangian cobordisms: for a Lagrangian cobordism $L_\gamma$ derived from a loop of Legendrian
submanifolds $\gamma$,
we denote the set of Lagrangian cobordisms that are Lagrangian isotopic to $L_\gamma$ through compactly supported isotopies  by $[L_\gamma]$. 
Clearly if the loop $\gamma$ is trivial, then there will be Lagrangian cobordisms of arbitrarily short length in $[L_\gamma]$.  If
the loop is non-trivial, the corresponding isotopy class of Lagrangian cobordisms may or may not contain short representatives:

\begin{thm} \label{thm:loops} 
For any $b > 0$, there exists a non-trivial loop $\gamma^b$ of Legendrian submanifolds with induced Lagrangian cobordism $L_{\gamma^b}$ such that the length of any Lagrangian cobordism in $\left[L_{\gamma^b}\right]$ is bounded below by $b$.
 On the other hand, there exists a non-trivial loop of Legendrian submanifolds $\gamma'$ with induced Lagrangian cobordism $L_{\gamma'}$ such that $[L_{\gamma'}]$ contains arbitrarily short cobordisms.   

\end{thm}
\noindent
The non-trivial loop $\gamma$ that leads to long cobordisms comes from a loop of Legendrian trefoil knots constructed by K\'alm\'an \cite{kalman:mono1}. The non-trivial loop $\gamma'$ that leads to short cobordisms comes from a loop of Legendrian spheres in $\rr^{2n+1}$ constructed by the first author and Sullivan \cite{ss:pi-k}.

\subsection{Tools}
\label{ssec:tools}

The results above require two types of tools:  those that allow us to construct Lagrangian cobordisms and examine their lengths, and those that allow us to obstruct the existence of a short Lagrangian cobordism. To prove upper bounds on the length of a Lagrangian cobordism, we re-examine existing constructions of Lagrangian cobordisms  using Legendrian isotopies as in \cite{ehk:leg-knot-lagr-cob}, slightly refined with the notion of length in mind.  
To obtain the lower bounds on length, we develop a framework for capacities of Legendrian submanifolds, and show how those capacities are related under Lagrangian cobordism. 

Our capacities are derived from  a filtered version of Legendrian Contact Cohomology, linearized by an augmentation $\varepsilon$; these capacities are, in a sense, monotonic under Lagrangian cobordism.  Note that this framework --- though not the actual construction --- is similar to that used in  for slices of ``flat-at-infinity'' Lagrangians \cite {josh-lisa:cap} and to Hutchings ECH capacities \cite{hutchings:ech-cap}.  More specifically, for each linearized Legendrian Contact Cohomology  class $\theta \in LCH^*(\leg, \varepsilon)$, we  define  a quantity $c(\leg, \varepsilon, \theta) \in (0,+\infty]$ 
  that, essentially, measures the Reeb height of the class $\theta$.  Ekholm showed that a Lagrangian cobordism $L$ from $\leg_-$ to $\leg_+$ with an augmentation $\varepsilon_-$ for $\leg_-$ induces an augmentation $\varepsilon_+$ of $\leg_+$ and a map
\[ \Psi^{L, \varepsilon_-}_1: LCH^*(\leg_-,\varepsilon_-) \to LCH^*(\leg_+,\varepsilon_+).\]
We show that this map is actually a filtered map, and prove the following inequality:

\begin{thm} \label{thm:mono}
	Given a Lagrangian cobordism $L$ from  $\leg_-$ to $\leg_+$ that is cylindrical outside $[s_-,s_+]$,  an augmentation $\varepsilon_-$ of $\leg_-$, and the augmentation $\varepsilon_+$ of $\leg_+$ induced by $L$,   the following inequality holds for any $\theta \in LCH^*(\leg_-, \varepsilon_-)$: 
\begin{equation}
e^{s_-} c(\leg_-, \varepsilon_-,\theta) \leq e^{s_+}c(\leg_+, \varepsilon_+, \Psi_1^{L,\varepsilon_-}(\theta)).
\end{equation}
		Thus, for any augmentation $\varepsilon_-$ of $\leg_-$ and any $\theta \in LCH^*(\leg_-, \varepsilon_-)$ not in $\ker \Psi_1^{L,\varepsilon_-}$, we may bound the length of $L$ by 
\begin{equation} \label{eq:length-mono}
		\ell(L) \geq \ln c(\leg_-, \varepsilon_-,\theta) - \ln c (\leg_+, \varepsilon_+, \Psi_1^{L,\varepsilon_-}(\theta)).
\end{equation}
\end{thm}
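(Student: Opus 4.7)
The plan is to deduce the capacity inequality by showing that the chain-level cobordism map underlying $\Psi_1^{L,\varepsilon_-}$ is filtered with respect to a Reeb-height filtration whose scaling factors are $e^{s_\pm}$, and then to derive the length bound as a formal consequence. The technical heart of the argument is an action inequality for the holomorphic disks that compute $\Psi_1^{L,\varepsilon_-}$.

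Concretely, consider a $J$-holomorphic disk $u$ with boundary on $L$, one positive puncture asymptotic to a Reeb chord $a$ of $\leg_+$, one distinguished negative puncture asymptotic to a Reeb chord $b$ of $\leg_-$, and additional negative punctures at chords $c_1,\dots,c_m$ of $\leg_-$ on which $\varepsilon_-$ is nonzero. I would aim to show
\[ e^{s_+} h(a) \;\geq\; e^{s_-} h(b) + e^{s_-}\sum_{i=1}^{m} h(c_i). \]
To do so, replace the Liouville form $e^s\alpha$ on $\mathbb{R} \times Y$ by a modified form $\phi(s)\alpha$, where $\phi:\mathbb{R}\to\mathbb{R}_{>0}$ is a smooth monotone function equal to $e^s$ on $[s_-,s_+]$ and tapered to the constants $e^{s_\pm}$ outside. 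Because the non-cylindrical portion of $L$ already lies in $[s_-,s_+]$ and $\alpha$ vanishes on the Legendrian cross-sections $\leg_\pm$, the associated two-form $\omega' = d(\phi\alpha)$ still makes $L$ an exact ``Lagrangian'' in this modified sense, with primitive inherited from the exactness of $L$ in the original symplectization. Moreover, $u^*\omega'\geq 0$ pointwise: on $[s_-,s_+]\times Y$ this follows from $J$-holomorphicity with the original $\omega$, while on the cylindrical ends $u^*\omega' = \phi(s)\, u^*(d\alpha) \geq 0$ by compatibility of $J$ with the contact distribution. Applying Stokes to $\int_\Sigma u^*\omega' = \int_{\partial\Sigma} u^*(\phi\alpha)$ and analyzing the asymptotic behavior of $u$ near each puncture, the boundary integral receives no net contribution from arcs on the cylindrical part of $L$ (where $\phi\alpha|_L = 0$), cancels on the compact part via a suitable choice of the primitive of $\phi\alpha|_L$, and at the asymptotic caps reduces to $e^{s_+} h(a) - e^{s_-}h(b) - e^{s_-}\sum_i h(c_i)$. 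Nonnegativity then yields the inequality.

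Granted this action inequality, the filtered property of the cobordism map is immediate. Filter the linearized cochain complex $LCC^*(\leg,\varepsilon)$ by the decreasing family of subcomplexes $F^{\geq H}$ spanned by duals of Reeb chords of height at least $H$; each $F^{\geq H}$ is a subcomplex by the standard symplectization action inequality, which forces the cochain differential to raise Reeb height. On chains, $\Psi_1^{L,\varepsilon_-}$ sends each $b^*$ (with $b$ a chord of $\leg_-$) to $\sum_a n(a,b)\,a^*$ (with $a$ a chord of $\leg_+$), and the action inequality above forces $h(a) \geq e^{s_- - s_+}\, h(b)$ whenever $n(a,b) \neq 0$. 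Thus $\Psi_1^{L,\varepsilon_-}\bigl(F^{\geq H}(\leg_-)\bigr) \subseteq F^{\geq e^{s_- - s_+} H}(\leg_+)$. With $c(\leg,\varepsilon,\theta)$ interpreted as the supremum of those $H$ for which $\theta$ admits a cocycle representative in $F^{\geq H}$, any such representative is sent under $\Psi_1^{L,\varepsilon_-}$ to a cocycle representative of $\Psi_1^{L,\varepsilon_-}(\theta)$ in $F^{\geq e^{s_- - s_+} H}$. Taking suprema over $H < c(\leg_-,\varepsilon_-,\theta)$ gives $c(\leg_+,\varepsilon_+,\Psi_1^{L,\varepsilon_-}(\theta)) \geq e^{s_- - s_+}\, c(\leg_-,\varepsilon_-,\theta)$, which rearranges to the capacity inequality in the theorem.

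The length bound is then formal: taking logarithms yields $\ln c(\leg_-,\varepsilon_-,\theta) - \ln c(\leg_+,\varepsilon_+,\Psi_1^{L,\varepsilon_-}(\theta)) \leq s_+ - s_-$ for every pair $s_- \leq s_+$ outside of which $L$ is cylindrical, and passing to the infimum over such pairs replaces the right-hand side with $\ell(L)$. The main obstacle is the Stokes computation in the action inequality: in particular, the primitive $f$ of $\phi\alpha|_L$ must be chosen so that it is locally constant on the cylindrical ends of $L$ with matching values at the two endpoints of each Reeb chord, and one must verify exponential convergence of $u$ to its asymptotic Reeb-chord strips with enough precision that the cap integrals really contribute $e^{s_\pm} h(\cdot)$ in the limit. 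Handling these asymptotic and primitive-choice subtleties carefully is the crux of the proof.
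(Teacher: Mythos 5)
Your proposal is correct and follows essentially the same route as the paper's proof: you establish the weighted action inequality for disks with boundary on $L$ (this is exactly Lemma~\ref{lem:disk-energy}, which the paper cites to \cite{c-dr-g-g} rather than rederiving, using the same modified primitive $\varphi(s)\alpha$), deduce that $\psi_1^{L,\varepsilon_-}$ is filtered with the stated rescaling (this is the $k=1$ case of Lemma~\ref{lem:filtered-map}), and then conclude formally. Your characterization of $c(\leg,\varepsilon,\theta)$ as the supremum of $H$ for which $\theta$ admits a cocycle representative in $F^{\geq H}$ is an equivalent reformulation of the paper's definition via the vanishing of $P^w(\theta)$, so the final step matches the paper's commutative-diagram argument in substance.
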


As hinted by the subscript $1$ in the notation for the cobordism map, there is actually a full $A_\infty$ map between the Legendrian Contact Cohomology $A_\infty$ algebras of $\leg_\pm$.  A more general version of Theorem~\ref{thm:mono} holds for the components $\Psi^{L, \varepsilon_-}_k$ of the $A_\infty$ map; see Theorem~\ref{thm:a-infty-mono}.

\begin{rem}
	The capacity framework may also be developed using Generating Family Homology using the tools in \cite{josh-lisa:obstr}, but we chose to focus on the Legendrian Contact Homology tools in this paper.  
\end{rem}

The remainder of the paper is structured as follows:  in Section~\ref{sec:lagr-cob}, we set down  precise definitions, and in Section~\ref{sec:constructions}, we describe several constructions of Lagrangian cobordisms that will prove useful in later sections.  In Section~\ref{sec:capacity}, we establish the framework for capacities from Legendrian Contact Homology, culminating in a proof of Theorem~\ref{thm:mono} and its generalization to $A_\infty$ maps.  Applications of the capacities, including proofs of Theorems~\ref{thm:unknot-dilation}, \ref{thm:hopf}, and \ref{thm:loops}, are then given in Section~\ref{sec:applications}.  

\subsection*{Acknowledgements}  We thank Baptiste Chantraine and Richard Hind for several stimulating conversations about the foundations of capacities and the example in Section~\ref{ssec:hopf}, respectively.   We also thank the referee for insightful suggestions.

\section{Legendrian Submanifolds and Lagrangian Cobordisms}
\label{sec:lagr-cob}

In this section, we set notation  and specify the precise definition of a Lagrangian cobordism between two Legendrian submanifolds. Throughout the paper, we assume familiarity with basic ideas in contact and symplectic topology, especially with regards to Legendrian and (exact) Lagrangian submanifolds.  See, for example, Etnyre  \cite{etnyre:knot-intro} for background on Legendrian submanifolds and Audin, Lalonde, and Polterovich \cite{audin-lalonde-polterovich} on Lagrangian submanifolds. 

\subsection{Notation for Legendrian Submanifolds}
\label{ssec:legendrian-background}

We will work primarily in the $1$-jet space of a smooth manifold $M^m$ where $M$ is either compact or equal to $\rr^m$.  Topologically, we have $J^1M = T^*M \times \rr$ with local coordinates $(\mathbf{x}, \mathbf{y}, z)$.  A $1$-jet space is naturally a contact manifold with contact form $\alpha = dz - \mathbf{y} \cdot d\mathbf{x}$. 

A \dfn{Legendrian submanifold} $\leg \subset J^1M^m$ is an $m$-dimensional submanifold that is everywhere tangent to the contact structure $\xi = \ker \alpha$.  A Legendrian submanifold has two important projections:  the \dfn{front projection} to $M \times \rr$ and the \dfn{Lagrangian projection} to $T^*M$.  Note that the image of a Legendrian submanifold under the Lagrangian projection is an exact and possibly immersed Lagrangian submanifold.  

A \dfn{Reeb chord} of a Legendrian submanifold $\leg$ is an integral curve of the Reeb vector field, which is $\frac{\partial}{\partial z}$ in our setting,
whose endpoints both lie on $\leg$.  In particular, a Reeb chord will have 
constant $T^*M$ coordinates and will go from a lesser to a greater $z$ value.  Denote the collection of Reeb chords of $\leg$ by $\mathcal{R}_\leg$. The  \dfn{height of a Reeb chord} $b$  is:
\begin{equation} \label{eqn:height} h(b) = \int_b \alpha  = \int_b dz > 0.\end{equation}
Reeb chords are clearly in bijective correspondence with double points of the Lagrangian projection of $\leg$.  A Legendrian submanifold is \dfn{chord generic} if the corresponding double points of the Lagrangian projection of $\leg$ are transverse.

\subsection{Exact Cobordisms, Primitives, and Lengths}
\label{ssec:length}

We next discuss the formal definition of the Lagrangian cobordisms we consider in this paper.  Let $Y$ be an odd-dimensional manifold with contact form $\alpha$. We work  in the symplectization $(\rr \times Y, d(e^s\alpha))$ of $Y$, where $s$ denotes the coordinate on the $\rr$ factor.  

\begin{defn} \label{defn:cobordism}
	An \dfn{(exact, {orientable}, cylindrical-at-infinity) Lagrangian cobordism} is an exact, orientable Lagrangian submanifold $L \subset (\rr \times Y, d(e^s\alpha))$ such that there exist Legendrian submanifolds $\leg_\pm$ of $Y$ and real numbers $s_- \leq s_+$ satisfying:
	\begin{enumerate}
	\item $L \cap \left( (-\infty, s_-] \times Y \right) = (-\infty, s_-] \times \leg_-$,
	\item $L \cap \left([s_+,\infty) \times Y \right) = [s_+,\infty) \times \leg_+$, and
	\item The primitive of $e^s \alpha$ along $L$ is constant for $s < s_-$ and for $s > s_+$.\footnote{See \cite{chantraine:disconnected-ends} for an explanation of the third condition.  It is easy to check that the third condition is automatically satisfied if the Legendrians at the ends are connected. In general,  the primitive of $e^s \alpha$ along $L$ is constant on each component of $\leg_\pm$.}
	\end{enumerate}
	We say that $L$ is \dfn{cylindrical} outside of $[s_-,s_+]$.   We will say such a Lagrangian is a Lagrangian cobordism
	from $\leg_-$ to $\leg_+$.   
\end{defn}

For succinctness, we will henceforth drop the qualifiers ``exact, orientable, cylindrical-at-infinity'' when referring to Lagrangian cobordisms, though they are still understood to hold.  

The primary object of study in this paper is:

\begin{defn}
	The \dfn{length} of a Lagrangian cobordism $L$ is 
	\[\ell(L) = \inf \{s_+-s_-\;:\; L \text{ is cylindrical outside of } [s_-,s_+]\}.\]
\end{defn}

\section{Constructions of Lagrangian Cobordisms}
\label{sec:constructions}

In recent years, several methods for constructing Lagrangian cobordisms have been developed, yielding constructions based on Legendrian isotopy \cite{bst:construct, chantraine, ehk:leg-knot-lagr-cob, eg:finite-dim, golovko:tb}, spinning \cite{bst:construct, golovko:higher-spin}, and Lagrangian handle attachment \cite{bst:construct, rizell:surgery, ehk:leg-knot-lagr-cob}. 
In this section,  we will review a  method  to construct a Lagrangian
cobordism from a Legendrian isotopy and will then apply this construction to  give upper bounds on lengths of Lagrangian cobordisms induced by Legendrian isotopies.

We begin by reviewing the construction of a Lagrangian cobordism induced by a Legendrian isotopy from \cite[\S6]{ehk:leg-knot-lagr-cob}.  Note that the original construction in \cite{ehk:leg-knot-lagr-cob} was performed for Legendrian links in the standard contact $\rr^3$, but the proof goes through almost word-for-word in the more general setting.

Let $\leg_s$, with $s \in \rr$, be a smooth $1$-parameter family of closed, but not necessarily connected, Legendrian submanifolds of $J^1M$, where $\leg_s = \leg_-$ for $s \leq s_-$ and $\leg_s = \leg_+$ for $s \geq s_+$.  Parametrize the Legendrians in this isotopy by $\lambda_s: \Sigma \to J^1M$, where we write $\lambda_s(t) = (\mathbf{x}(s,t),\mathbf{y}(s,t),z(s,t))$.  While the trace of the isotopy in $\rr \times J^1M$ is not necessarily Lagrangian, we may use the function
\begin{equation} \label{eqn:eta}
	\eta(s, t) = \alpha ( \partial_s \lambda_s(t) )
\end{equation}
to perturb the trace into a (potentially immersed) exact Lagrangian cobordism.

\begin{lem}[\cite{ehk:leg-knot-lagr-cob}] \label{lem:isotopy-cobord} 
The map $\Gamma: \rr \times \Sigma \to \rr \times J^1M$ defined by 
\begin{equation} \label{eqn:immersion}
	\Gamma(s,t) = (s, \mathbf{x}(s,t), \mathbf{y}(s,t), z(s,t) + \eta(s,t))
\end{equation}
is an exact Lagrangian immersion.  If $\eta(s,  t)$ is
sufficiently small, then the image of $\Gamma$ is an exact Lagrangian cobordism from $\leg_-$ to  $\leg_+$. 
\end{lem}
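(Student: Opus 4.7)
The plan is to compute $\Gamma^*(e^s\alpha)$ directly and recognize the result as an exact $1$-form, thereby verifying the exact Lagrangian condition in one stroke; the immersion and embedding properties then follow from the shape of $\Gamma$ together with compactness of the Legendrians.

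First, I would compute $\Gamma^*\alpha = d(z+\eta) - \mathbf{y} \cdot d\mathbf{x}$ in the coordinates $(s,t)$ on $\rr \times \Sigma$, splitting each exterior derivative into its $ds$ and $dt$ components. The Legendrian condition on each $\lambda_s$ eliminates the tangential terms, yielding $\partial_{t_i} z - \mathbf{y}\cdot \partial_{t_i}\mathbf{x} = 0$ for every $i$, while the definition of $\eta$ supplies $\partial_s z - \mathbf{y}\cdot \partial_s \mathbf{x} = \eta$. The pulled-back form therefore collapses to
\begin{equation*}
\Gamma^*\alpha = (\eta + \partial_s \eta)\, ds + \sum_i \partial_{t_i}\eta\, dt_i,
\end{equation*}
and a direct comparison with $d(e^s\eta) = e^s[(\eta + \partial_s\eta)\,ds + \sum_i \partial_{t_i}\eta\, dt_i]$ yields $\Gamma^*(e^s\alpha) = d(e^s\eta)$. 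Hence the image of $\Gamma$ is isotropic and exact, with primitive $e^s\eta$.

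The immersion claim follows because $\partial_s \Gamma$ has $\partial/\partial s$-component equal to $1$ while each $\partial_{t_i}\Gamma$ has vanishing $\partial/\partial s$-component and projects to a tangent vector of the immersion $\lambda_s$; linear independence of these tangent vectors gives injectivity of $d\Gamma$. As the domain is half-dimensional in the symplectization, the isotropy upgrades to Lagrangian. Moreover, outside $[s_-,s_+]$ the family is constant in $s$, so $\eta \equiv 0$ and $\Gamma$ restricts to the cylinders $(-\infty,s_-]\times\leg_-$ and $[s_+,\infty)\times\leg_+$ required by Definition~\ref{defn:cobordism}; the primitive $e^s\eta$ also vanishes on these ends, confirming the third condition.

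The main obstacle is upgrading the immersion to an embedding when $\eta$ is small. Since $\Gamma$ preserves the $s$-coordinate, any self-intersection must occur on a single slice $\{s\}\times\Sigma$, so it suffices to prevent distinct points $t_1 \neq t_2$ with $\lambda_s(t_1)$ and $\lambda_s(t_2)$ sharing a $T^*M$-projection from acquiring equal $z$-coordinates after the $\eta$-shift. Over the compact interval $[s_-,s_+]$, the shortest Reeb chord of $\leg_s$ varies continuously in $s$ and thus admits a uniform positive lower bound $h_0$, while points with distinct $T^*M$-projections enjoy a uniform positive separation in $J^1M$. Reparametrizing the isotopy to be slow enough that $\sup|\eta| < h_0/2$ ensures the $z$-perturbation cannot merge distinct sheets, producing the embedded Lagrangian cobordism from $\leg_-$ to $\leg_+$.
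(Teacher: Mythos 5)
Your proof is correct and follows essentially the same path as the paper's sketch: a direct computation of $\Gamma^*(e^s\alpha)$ using the Legendrian condition $\partial_{t_i}z = \mathbf{y}\cdot\partial_{t_i}\mathbf{x}$ and the definition $\eta = \partial_s z - \mathbf{y}\cdot\partial_s\mathbf{x}$ to identify the primitive $e^s\eta$, then a smallness argument against the minimum Reeb chord height to pass from immersion to embedding. One small clarification worth making: the vectors $\partial_{t_i}\Gamma$ do not project to tangent vectors of $\lambda_s$ itself (the $z$-components differ by $\partial_{t_i}\eta$); rather, their $T^*M$-components coincide with those of $\partial_{t_i}\lambda_s$, and linear independence follows because the Lagrangian projection of a Legendrian immersion is an immersion. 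Also, the clause about points with distinct $T^*M$-projections enjoying uniform separation is unnecessary, since such points can never be identified by $\Gamma$ anyway.
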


The proof is a direct calculation that makes essential use of the fact that $\lambda_s(t)$ is Legendrian for each $s$, yielding a primitive $e^s \eta(s,t)$ for the pullback $\Gamma^*(e^s\alpha)$.  Since $\partial_s \lambda_s(t)$ has compact support, the primitive vanishes at  both ends of the cobordism.
 
 Observe that by ``spreading out'' the Legendrian isotopy, we can  
 guarantee that $\eta(s,t)$ is sufficiently small, and hence isotopic Legendrians can always
 be connected by a long Lagrangian cobordism.  We may, however, analyze when the immersion $\Gamma$ is an embedding more precisely:
 
\begin{lem}  \label{lem:reeb-cobord}
	The exact Lagrangian immersion $\Gamma$  from (\ref{eqn:immersion}) is an embedding if for all $s \in \rr$, no Reeb chord of $\leg_s$ (from $\lambda_s(t)$ to $\lambda_s(t')$) has height
$\eta(s,t) - \eta(s,t')$.
\end{lem}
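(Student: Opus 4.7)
The plan is to verify injectivity of $\Gamma$ directly; Lemma \ref{lem:isotopy-cobord} already supplies the immersion property and the cylindrical behavior outside $[s_-, s_+]$, so injectivity combined with the properness coming from those cylindrical ends will promote $\Gamma$ from an immersion to an embedding.

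First, I would suppose $\Gamma(s_1, t_1) = \Gamma(s_2, t_2)$ and unpack what this means coordinate by coordinate. Comparing the first coordinates forces $s_1 = s_2 =: s$. Comparing the $(\mathbf{x}, \mathbf{y})$-coordinates then shows that $\lambda_s(t_1)$ and $\lambda_s(t_2)$ project to the same point under the Lagrangian projection of $\leg_s$. Invoking the bijection between double points of the Lagrangian projection and Reeb chords recalled in Section \ref{ssec:legendrian-background}, either $\lambda_s(t_1) = \lambda_s(t_2)$ — in which case $t_1 = t_2$, since $\lambda_s$ parametrizes the embedded Legendrian $\leg_s$ — or $\lambda_s(t_1)$ and $\lambda_s(t_2)$ are the two distinct endpoints of a Reeb chord of $\leg_s$.

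In the Reeb chord case, I would relabel so that the chord runs from $\lambda_s(t_1)$ to $\lambda_s(t_2)$, i.e., $z(s, t_1) < z(s, t_2)$, and set $h = z(s, t_2) - z(s, t_1) > 0$ for its height. Equating the final coordinates of $\Gamma$ then yields
\[ z(s, t_1) + \eta(s, t_1) = z(s, t_2) + \eta(s, t_2),\]
which rearranges to $\eta(s, t_1) - \eta(s, t_2) = h$. This is precisely what the hypothesis of the lemma forbids, so this case cannot occur. Hence $\Gamma$ is injective, and when combined with the cylindrical-at-infinity structure this gives a proper embedding. The only real point requiring care — and the main, if mild, obstacle — is keeping the sign convention for Reeb chord heights consistent with the assignment of $t$ versus $t'$ to the two endpoints; once that is pinned down, the argument is a direct unpacking of the formula (\ref{eqn:immersion}) for $\Gamma$.
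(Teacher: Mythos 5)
Your proof is correct and follows essentially the same line of reasoning as the paper: equate coordinates of $\Gamma$, identify a potential double point with a Reeb chord of $\leg_s$, and observe that the height of that chord would then have to equal $\eta(s,t)-\eta(s,t')$, which the hypothesis rules out. The extra remark about properness via the cylindrical ends is a small technical completion the paper leaves implicit, but the argument is the same.
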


\begin{proof}   A double point of the image of $\Gamma$ comes from points $(s,t)$ and $(s',t')$ that satisfy $s=s'$, $\mathbf{x}(s,t) = \mathbf{x}(s,t')$, and $\mathbf{y}(s,t) = \mathbf{y}(s,t')$ --- in particular, $\lambda_s(t)$ and $\lambda_s(t')$ must be endpoints of a Reeb chord of $\leg_s$.  Equality of the last coordinate of the immersion $\Gamma$ shows that $z(s,t) + \eta(s,t) = z(s,t') + \eta(s,t')$. Assume without loss of generality that $z(s,t) < z(s,t')$.
We then see that double points occur when  the height of that Reeb chord from $\lambda_s(t)$ to $\lambda_s(t')$ equals $\eta(s,t) - \eta(s,t')$.  
\end{proof}

From this, we see that two Legendrians related by a strict contact isotopy can be connected by an arbitrarily short Lagrangian cobordism:

\begin{prop} \label{prop:form-preserving-diffeo}  Let $\leg \subset J^1M$ be a Legendrian submanifold.  If $\Phi: J^1M \times \rr \to J^1M$ is a contact isotopy such that $\Phi_s = id$ for $s\leq 0$, $\Phi_s = \Phi_1$ for $s\geq 1$, and each $\Phi_s$ preserves the contact form $\alpha$, then there exist arbitrarily short Lagrangian cobordisms from $\leg_- = \leg$ to $\leg_+ = \Phi_1(\leg)$.
\end{prop}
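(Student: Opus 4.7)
The plan is to apply the construction of Lemmas~\ref{lem:isotopy-cobord} and \ref{lem:reeb-cobord} to a time-reparametrized version of the given isotopy, exploiting the strict-contact hypothesis to ensure that the resulting immersion is an embedding no matter how short the time interval. The key point will be that preservation of $\alpha$ forces the function $\eta$ of (\ref{eqn:eta}) to take equal values at the two endpoints of every Reeb chord of $\leg_s$, so the embedding criterion of Lemma~\ref{lem:reeb-cobord} holds automatically, regardless of the magnitude of $\eta$.

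First, I would identify $\eta$ with the contact Hamiltonian of the isotopy. Let $X_s := (\partial_s \Phi_s)\circ\Phi_s^{-1}$ and $H_s := \alpha(X_s)$; parametrizing $\leg_s = \Phi_s(\leg)$ by $\lambda_s := \Phi_s\circ\lambda_0$ gives $\eta(s,t) = H_s(\lambda_s(t))$. The hypothesis $\Phi_s^*\alpha = \alpha$ is equivalent to $\mathcal{L}_{X_s}\alpha = 0$, which by Cartan's formula $\mathcal{L}_{X_s}\alpha = dH_s + i_{X_s}d\alpha$ (together with $i_R d\alpha = 0$) forces $R(H_s) = 0$; since on $J^1M$ the Reeb field is $R = \partial_z$, $H_s$ is independent of $z$. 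Now if $(t,t')$ parametrize the endpoints of a Reeb chord of $\leg_s$, then $\lambda_s(t)$ and $\lambda_s(t')$ share their $(\mathbf{x},\mathbf{y})$ coordinates, so $H_s$ takes the same value on both, and hence $\eta(s,t) - \eta(s,t') = 0$. Since Reeb chord heights are strictly positive, Lemma~\ref{lem:reeb-cobord} guarantees that the map $\Gamma$ associated to $\Phi_s$ is an embedded exact Lagrangian cobordism from $\leg$ to $\Phi_1(\leg)$.

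To obtain cobordisms of arbitrarily small length, I would reparametrize time: for each $\epsilon > 0$, set $\Phi^{\epsilon}_s := \Phi_{s/\epsilon}$ on $[0,\epsilon]$, extended as $\mathrm{id}$ for $s\leq 0$ and as $\Phi_1$ for $s\geq\epsilon$. Since all $s$-derivatives of $\Phi_s$ vanish at $s=0$ and $s=1$, this yields a smooth strict-contact isotopy with the correct endpoints, and the preceding analysis applies verbatim: the new $\eta^\epsilon(s,t) = (1/\epsilon)\,\eta(s/\epsilon, t)$ is larger in absolute value, but the vanishing $\eta^\epsilon(s,t) - \eta^\epsilon(s,t') = 0$ at Reeb chord pairs persists. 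The resulting cobordism is cylindrical outside $[0,\epsilon]$, yielding the required family.

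The main obstacle, such as it is, lies in the short but essential Cartan-formula computation identifying strict-contact preservation of $\alpha$ with Reeb-invariance of $H_s$; this is standard but merits explicit verification since it is precisely where the full strength of the hypothesis enters. Everything else --- smoothness of the reparametrized isotopy at the transition points and the identification of Reeb chord endpoints of $\leg_s$ with those of $\leg$ via $\Phi_s$ --- is routine.
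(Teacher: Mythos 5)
Your proof is correct and matches the paper's argument: both use the Cartan formula and the strict-contact hypothesis to show that $\alpha(X_s)$ is Reeb-invariant, hence $\eta$ agrees at the two endpoints of every Reeb chord of $\leg_s$, so Lemma~\ref{lem:reeb-cobord} applies for any time-parametrization of the isotopy. Your explicit time-reparametrization step to realize arbitrarily short lengths is left implicit in the paper but is the same idea.
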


\begin{proof} 
Let $X_s$ be the contact vector field generating $\Phi_s$ and note that $\eta$ is the restriction of $\alpha(X_s)$ to the trace of $\leg$ under $\Phi_s$.  Since $\Phi_s$ preserves the contact form, we may compute:
\begin{equation} \label{eq:lie-alpha}
0 = \mathcal{L}_{X_s} \alpha = d(\alpha(X_s)) + \iota_{X_s} d\alpha.
\end{equation}
Plugging the Reeb field $\partial_z$ into both sides of Equation (\ref{eq:lie-alpha}) and using that $d\alpha(\partial_z, \cdot) = 0$ tells us that $0 = d(\alpha(X_s))(\partial_z) = \partial_z(\alpha(X_s))$. 
  That is, $\eta$ is constant along Reeb chords,  hence $\eta(s,t) - \eta(s,t') = 0$ at the endpoints of any Reeb chord.  Since the height of a Reeb chord is never $0$, Lemma~\ref{lem:reeb-cobord} implies that the Lagrangian induced by the Legendrian isotopy $\leg_s$ is always embedded.  
\end{proof}

As an immediate application, we see that any two Legendrians that are related by a ``vertical shift" can be connected by an arbitrarily short Lagrangian cobordism.

 \begin{cor} \label{cor:vert-shift}
	Given a Legendrian $\leg \subset J^1M$, for any $\nu \in \rr$, let $\leg^\nu$  denote the Legendrian that is a vertical translation of $\leg$ 
	by $\nu$:
	 if $\leg$ is parameterized by $\lambda(t) = (x(t),y(t),z(t))$, $\leg^\nu$ is parameterized by $\lambda(t) = (x(t),y(t),z(t)+ \nu)$.
	Then, for any $\nu \in \rr$,  
	there exist arbitrarily short cobordisms from $\leg_- = \leg$ to $\leg_+ = \leg^{\nu}$.
\end{cor}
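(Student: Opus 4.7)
The plan is to exhibit vertical translation as the time-$1$ map of an isotopy of strict contactomorphisms and then invoke Proposition~\ref{prop:form-preserving-diffeo} directly. Concretely, I would pick a smooth cutoff $f: \rr \to [0,1]$ with $f(s) = 0$ for $s \leq 0$ and $f(s) = 1$ for $s \geq 1$, and define the isotopy $\Phi: J^1M \times \rr \to J^1M$ by
\[
\Phi_s(\x,\y,z) \;=\; \bigl(\x,\y,\, z + f(s)\nu\bigr).
\]
By construction, $\Phi_s = \mathrm{id}$ for $s \leq 0$, $\Phi_s = \Phi_1$ for $s \geq 1$, and $\Phi_1(\leg) = \leg^{\nu}$.

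The key point to check is that each $\Phi_s$ preserves the contact form $\alpha = dz - \y \cdot d\x$ on the nose, not just its kernel. Since $f(s)\nu$ depends only on $s$ and is constant on $J^1M$, we have
\[
\Phi_s^*\alpha \;=\; d\bigl(z + f(s)\nu\bigr) - \y \cdot d\x \;=\; dz - \y \cdot d\x \;=\; \alpha,
\]
so $\Phi_s$ is a strict contactomorphism. Hence Proposition~\ref{prop:form-preserving-diffeo} applies and produces arbitrarily short Lagrangian cobordisms from $\leg$ to $\leg^{\nu}$.

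There is essentially no technical obstacle here: the corollary is really just a sanity check that the vertical-shift family fits the hypotheses of the previous proposition. The only subtlety worth flagging is that one must use $\alpha$-preservation rather than mere preservation of $\xi = \ker\alpha$, because it is this stronger property that forces the function $\eta(s,t)$ appearing in Lemma~\ref{lem:reeb-cobord} to be constant along Reeb chords, and hence forces the induced Lagrangian trace to be embedded no matter how sharply one compresses the isotopy in the $s$-direction.
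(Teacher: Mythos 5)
Your proof is correct and takes exactly the approach the paper has in mind: the paper presents Corollary~\ref{cor:vert-shift} as ``an immediate application'' of Proposition~\ref{prop:form-preserving-diffeo}, obtained by viewing vertical translation as the time-$1$ map of the strict contact isotopy $\Phi_s(\x,\y,z)=(\x,\y,z+f(s)\nu)$. Your observation that one needs full $\alpha$-preservation (not merely $\ker\alpha$-preservation) so that $\eta$ is constant along Reeb chords also matches the mechanism in the paper's proof of Proposition~\ref{prop:form-preserving-diffeo}.
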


\begin{rem}\label{rmk:vert-distort}   For later purposes, it will be useful to more carefully examine the Lagrangian generated in the proof of Corollary~\ref{cor:vert-shift}.
By construction, the Lagrangian cobordism is the image of 
$$\Gamma(s,t) = (s, \mathbf{x}(t), \mathbf{y}(t), z(t) + \rho(s) + \rho'(s)).$$
 For any fixed $\nu \neq 0$, as the cobordism gets shorter, $\|\rho'\|_\infty$ must get larger.  Thus, the 
Lagrangian will be quite different than the trace of the isotopy.   
\end{rem}

In addition to being able to realize vertical displacements by arbitrarily short cobordisms, 
Legendrians related by a horizontal displacement or other  
changes that result from
transformations of the base coordinates can be connected by
 arbitrarily short cobordisms.

\begin{cor} \label{cor:base-diffeo}   Let $\leg \subset J^1M$ be parametrized by $\lambda(t) = (x(t),y(t),z(t))$ and suppose
$\Phi$ is a diffeomorphism of $M$ that is isotopic to the identity.  Let $\leg_\Phi$ denote the corresponding Legendrian that is parameterized by  \[\lambda_\Phi(t) = \left(\Phi(x(t)), \, [D\Phi(x(t))^{-1}]^T y(t), \,z(t)\right).\] 
Then there are  arbitrarily short Lagrangian cobordisms from $\leg_-= \leg$ to
$\leg_+ = \leg_\Phi$.
\end{cor}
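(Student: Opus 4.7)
The plan is to reduce this to Proposition~\ref{prop:form-preserving-diffeo} by exhibiting $\leg_\Phi$ as the image of $\leg$ under a strict contact isotopy of $J^1M$, i.e.\ an isotopy through contactomorphisms that preserve the contact form $\alpha = dz - \y \cdot d\x$ (not merely the contact distribution).

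Starting from an isotopy $\Phi_s$ of $M$ from the identity to $\Phi$, I would reparametrize in the $s$-variable so that $\Phi_s = \mathrm{id}$ for $s \leq 0$ and $\Phi_s = \Phi$ for $s \geq 1$. I would then consider the natural cotangent-style lift $\widetilde{\Phi}_s : J^1M \to J^1M$ given by
\[
\widetilde{\Phi}_s(\x, \y, z) \;=\; \left(\Phi_s(\x),\; [D\Phi_s(\x)^{-1}]^T \y,\; z\right),
\]
and observe directly from the formula in the statement that $\widetilde{\Phi}_1(\leg) = \leg_\Phi$.

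The main (and essentially only) content of the proof is the check that each $\widetilde{\Phi}_s$ preserves $\alpha$ on the nose. A short computation gives $\widetilde{\Phi}_s^*(dz) = dz$ and
\[
\widetilde{\Phi}_s^*(\y \cdot d\x) \;=\; \bigl([D\Phi_s(\x)^{-1}]^T \y\bigr) \cdot \bigl(D\Phi_s(\x)\, d\x\bigr) \;=\; \y \cdot d\x,
\]
so $\widetilde{\Phi}_s^* \alpha = \alpha$ as desired. Hence $\{\widetilde{\Phi}_s\}$ is a strict contact isotopy satisfying the hypotheses of Proposition~\ref{prop:form-preserving-diffeo}, which immediately produces arbitrarily short Lagrangian cobordisms from $\leg$ to $\widetilde{\Phi}_1(\leg) = \leg_\Phi$.

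There is no genuine obstacle here; the only subtlety is recognizing that the standard cotangent lift of a base diffeomorphism, combined with the identity on the $z$-coordinate, is form-preserving (not just contact-preserving), which is exactly what makes Proposition~\ref{prop:form-preserving-diffeo} applicable. The potential concern that generic contact lifts only preserve $\alpha$ up to a conformal factor is avoided by this particular, canonical choice of lift.
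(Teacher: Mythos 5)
Your proof is correct and matches the paper's (implicit) argument: the paper states Corollary~\ref{cor:base-diffeo} without a written proof, but the intended route is precisely the reduction to Proposition~\ref{prop:form-preserving-diffeo} via the $1$-jet lift $\widetilde{\Phi}_s(\x,\y,z) = (\Phi_s(\x), [D\Phi_s(\x)^{-1}]^T\y, z)$ and the observation that $\widetilde{\Phi}_s^*\alpha = \alpha$ on the nose, which you verify cleanly.
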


It is also natural to consider Legendrians whose fronts differ by vertical expansions and contractions.  Explicit constructions show that
vertical expansions can be achieved by arbitrarily short cobordisms, while vertical contractions require some length using these 
constructions:

\begin{prop} \label{prop:scaling} Let $\leg \subset J^1M$ be a Legendrian submanifold.  Let $\leg^\sigma$ denote the image of $\leg$ under the $yz$ scaling contact diffeomorphism $(x,y,z) \mapsto (x,\sigma y, \sigma z)$.
\begin{enumerate}
\item If $\sigma > 1$, then there are arbitrarily short Lagrangian cobordisms from $\leg_- = \leg$ to $\leg_+ = \leg^\sigma$.
\item  If $\sigma < 1$, then there exists a Lagrangian cobordism from  $\leg_- = \leg$ to $\leg_+ = \leg^\sigma$ of length arbitrarily close to $\ln \frac{1}{\sigma}$.
\end{enumerate}
\end{prop}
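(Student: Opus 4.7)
The plan is to construct both cobordisms by applying Lemmas~\ref{lem:isotopy-cobord} and~\ref{lem:reeb-cobord} to an explicit Legendrian isotopy that realizes the scaling. Let $r: \rr \to \rr$ be a smooth positive function with $r(s) = 1$ for $s \leq s_-$ and $r(s) = \sigma$ for $s \geq s_+$, and parametrize $\leg_s$ by $\lambda_s(t) = (x(t), r(s)y(t), r(s)z(t))$, where $(x(t), y(t), z(t))$ parametrizes $\leg$. Each $\leg_s$ is Legendrian since the Legendrian condition for $\leg$ factors out $r(s)$; plugging $\partial_s \lambda_s = (0, r'(s)y(t), r'(s)z(t))$ into $\alpha = dz - y \cdot dx$ at $\lambda_s(t)$ yields
\[
  \e(s,t) = r'(s) z(t).
\]

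I would then apply Lemma~\ref{lem:reeb-cobord} to determine when the trace is embedded. Every Reeb chord of $\leg_s$ arises from a Reeb chord of $\leg$ with endpoints $\lambda_s(t), \lambda_s(t')$ (say $z(t) < z(t')$); it has height $r(s)(z(t') - z(t))$, while $\e(s,t) - \e(s,t') = -r'(s)(z(t') - z(t))$. After canceling the positive factor $z(t') - z(t)$, the embedding condition collapses, independently of which chord is considered, to the single inequality
\[
  r(s) + r'(s) \neq 0 \quad \text{for all } s \in \rr,
\]
or equivalently $f'(s) \neq 0$, where $f(s) := e^s r(s)$.

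Part~(1) is then immediate: if $\sigma > 1$, any smooth monotonically non-decreasing interpolation $r$ from $1$ to $\sigma$ satisfies $r(s) + r'(s) \geq r(s) > 0$, so the transition can be compressed into an arbitrarily short interval. For part~(2), observe that $f'(s) = e^s(r(s) + r'(s)) > 0$ outside $[s_-, s_+]$, where $r$ is a positive constant, so the nonvanishing condition forces $f$ to be strictly increasing on all of $\rr$. Comparing $f(s_-) = e^{s_-}$ with $f(s_+) = e^{s_+} \sigma$ then requires $s_+ - s_- > \ln(1/\sigma)$; conversely, for any $\delta > 0$ and prescribed length $\ln(1/\sigma) + \delta$, I would construct a smooth strictly increasing $f$ on $[s_-, s_+]$ that matches the values and derivatives of $e^s$ at $s_-$ and $e^s \sigma$ at $s_+$, and then set $r(s) := e^{-s} f(s)$. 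The main subtlety is recognizing that the entire embedding obstruction funnels through a single monotonicity condition on the auxiliary function $f$: this monotonicity is precisely what prevents us from reaching length $\ln(1/\sigma)$ exactly, consistent with the sharp lower bound in Theorem~\ref{thm:unknot-dilation}, while still allowing us to approach it arbitrarily closely.
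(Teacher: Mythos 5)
Your proof is correct and follows essentially the same route as the paper: the same scaling isotopy $\lambda_s(t) = (x(t), r(s)y(t), r(s)z(t))$, the same computation of $\eta$, and the same reduction of Lemma~\ref{lem:reeb-cobord} to the single monotonicity condition $\frac{d}{ds}\bigl(e^s r(s)\bigr) \neq 0$. The only differences are notational ($r$ and $[s_-,s_+]$ in place of $\rho$ and $[0,A]$).
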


\begin{proof}  Parametrize the Legendrian $\leg$ by $\lambda(t) = (x(t),y(t),z(t))$, and suppose $\leg$ has $k$ Reeb chords of heights
$0 < h_1 \leq h_2 \leq \dots \leq h_k$. 
Let $\rho: \rr \to \rr$ be a smooth function that is equal to $1$ for $s \leq 0$ and is equal to   $\sigma$ for $s \geq A$.  Consider the Legendrian isotopy 
	\begin{equation*}
		\lambda_s(t) = (x(t), \rho(s)y(t), \rho(s)z(t)).
	\end{equation*}
It follows that $\eta(s,t) = \rho'(s)z(t)$.    The Legendrian $\leg_s$ given as the image of $\lambda_s$ will again have
$k$ Reeb chords, now of heights 
$0 < \rho(s) h_1 \leq \rho(s) h_2 \leq \dots \leq \rho(s) h_k$.  
For every Reeb chord of $\leg_s$ of height $\rho(s)h_i$, there will be a pair of points $t, t'$ with 
$\eta(s,t) - \eta(s, t') = \rho'(s) (-h_i)$.  Thus 
we see that the embedding condition in Lemma~\ref{lem:reeb-cobord} is guaranteed when
 $\rho(s)  \neq -\rho'(s)$, for all $s$; equivalently, the condition is
 \begin{equation} \label{eqn:scaling}
 \frac{d}{ds} (e^s \rho(s)) \neq 0.
 \end{equation}
Since $\rho(s) = 1$ when $s \leq 0$, this equation is satisfied if and only if $e^s \rho(s)$ is a strictly increasing function.
 
It is possible to choose $\rho(s)$ that satisfies the boundary conditions $\rho(0) = 1$ and $\rho(A) = \sigma$ in addition to Equation (\ref{eqn:scaling}) whenever 
\begin{equation} \label{eq:dilation}
1 = e^s \rho(s) |_{s = 0} < e^s \rho(s) |_{s = A} = e^A \sigma.
\end{equation}
For an expansion, i.e. when $\sigma > 1$, Equation~(\ref{eq:dilation}) is satisfied for any $A>0$; statement (1) follows. For a contraction, i.e. when $\sigma < 1$,  we can 
 construct $\rho(s)$ so that $e^s \rho(s)$ is strictly increasing as long as 
 $\ln \frac{1}{\sigma} < A$; statement (2) follows.
\end{proof}

Are the cobordisms constructed in the proof of Proposition~\ref{prop:scaling}(2) the shortest possible?  We will return to this question in Section~\ref{sec:applications} after developing tools to bound the length of a cobordism.

\section{Capacities from Legendrian Contact Homology}
\label{sec:capacity}

In this section, we will define a set of numbers from $(0, \infty]$ that can be associated to a Legendrian submanifold that is equipped with an augmentation $\varepsilon$ of its DGA.    These capacities, defined using a filtered version of Linearized Legendrian Contact Cohomology,  will \emph{not} be Legendrian invariants;  however, as will be shown in Section~\ref{sec:cobord-maps},  these capacities can  give lower bounds to the length of a Lagrangian cobordism.

\subsection{Legendrian Contact Homology}
\label{ssec:lch-review}

The Legendrian Contact Homology (LCH) differential graded algebra is motivated by the infinite-dimensional Morse-Floer theory of the action functional on the relative path space of a Legendrian submanifold $\leg$.  The analytic framework for Legendrian Contact Homology was developed by Eliashberg \cite{yasha:icm, egh} and was first made rigorous for $1$-dimensional Legendrians using combinatorial methods by Chekanov \cite{chv}.  The theory was established for  higher dimensional Legendrian submanifolds in $1$-jet spaces by Ekholm, Etnyre, and Sullivan   \cite{ees:high-d-analysis, ees:high-d-geometry,  ees:pxr}.   Our description below owes more to \cite{egh} and to Dimitroglou Rizell's  translation between the two perspectives \cite{rizell:lifting}.

We begin our brief synopsis with a chord-generic Legendrian submanifold $\leg$ of $J^1M$ with its standard contact structure $\xi = \ker \alpha$,
where $M$ is either  a compact manifold or   $\rr^{n}$ for $n \geq 1$.  
 As Legendrian Contact Homology takes the form of a differential graded algebra, we first define $A_\leg$ to be the vector space generated by the set of Reeb chords  $\mathcal{R}_\leg$ over the field $\ff_2$ of two elements.  We then define $\mathcal A_\leg$ to be the unital tensor algebra $TA_\leg = \bigoplus_{i = 0}^\infty A_\leg^{\otimes i}$.  The generators of $A_\leg$ are graded by a Conley-Zehnder index, with the grading extended to all of $\mathcal{A}_\leg$ by letting the grading of a word be
the sum of the gradings of its constituent generators.  The gradings are well-defined up to the Maslov number of the Lagrangian projection of $\leg$. If $\leg$ has $n$ components, then the gradings of Reeb chords between different components are defined up to a shift that is constant for all Reeb chords between the same two components.

The differential $\partial_\leg: \mathcal{A}_\leg \to \mathcal{A}_\leg$ comes from a count of rigid moduli spaces of $J$-holomorphic disks.  More specifically, first choose a sufficiently generic, compatible, and $\rr$-invariant almost complex structure $J$  on the symplectization $(\rr \times J^1M, d(e^s \alpha))$ satisfying
$J(\xi) = \xi$ and $J(\partial_s) = \partial_z$.  Let $D_k$ denote the closed unit disk in $\cc$ with $k+1$ punctures $\{z_0, \ldots, z_k\}$ on its boundary. Given a Reeb chord $a \in \mathcal R_\leg$ and a monomial $\mathbf{b} = b_1 \cdots b_k$ in $\mathcal{A}_\leg$, consider the set of $J$-holomorphic maps $u: D_k \to \rr \times J^1M$ that satisfy:
\begin{enumerate} 
\item The boundary of $D_k$ maps to $\rr \times \leg$;
\item Near the puncture $z_0$, the $\rr$ coordinate of $u$ approaches $+\infty$ and the $J^1M$ coordinate is asymptotic to $\rr \times a$; and
\item Near the puncture $z_l$, for $1 \leq l \leq k$, the $\rr$ coordinate of $u$ approaches $-\infty$ and the $J^1M$ coordinate is asymptotic to $\rr \times b_l$.
\end{enumerate}
See Figure~\ref{fig:j-disk}(a). The moduli space $\mathcal{M}^J_{\rr \times \leg}(a;\mathbf{b})$ is the set of such maps up to reparametrization. Generically, the moduli space is a manifold that is invariant under the $\rr$-action induced by translation in the symplectization \cite{ees:high-d-analysis,ees:pxr}. 
\begin{figure}
\labellist
\tiny
\pinlabel $M$ [l] at 240 240
\pinlabel $M$ [r] at 425 240
\pinlabel $\rr$ at 217 130
\pinlabel $\rr$ at 442 130
\pinlabel $\leg \times \rr$ [l] at 175 180
\pinlabel $L$ [l] at 585 130
\pinlabel $\leg_+$ [l] at 593 240
\pinlabel $\leg_-$ [l] at 567 20
\pinlabel $a$ at 120 240
\pinlabel $b_1$ at 105 20
\pinlabel $b_2$ at 137 27 
\pinlabel $a$ at 540 240
\pinlabel $b_1$ at 525 20
\pinlabel $b_2$ at 552 27 
\pinlabel $z_0$ [b] at 330 75
\pinlabel $z_1$ [r] at 310 10
\pinlabel $z_2$ [l] at 350 10
\endlabellist
\centerline{\includegraphics[width=5.5in]{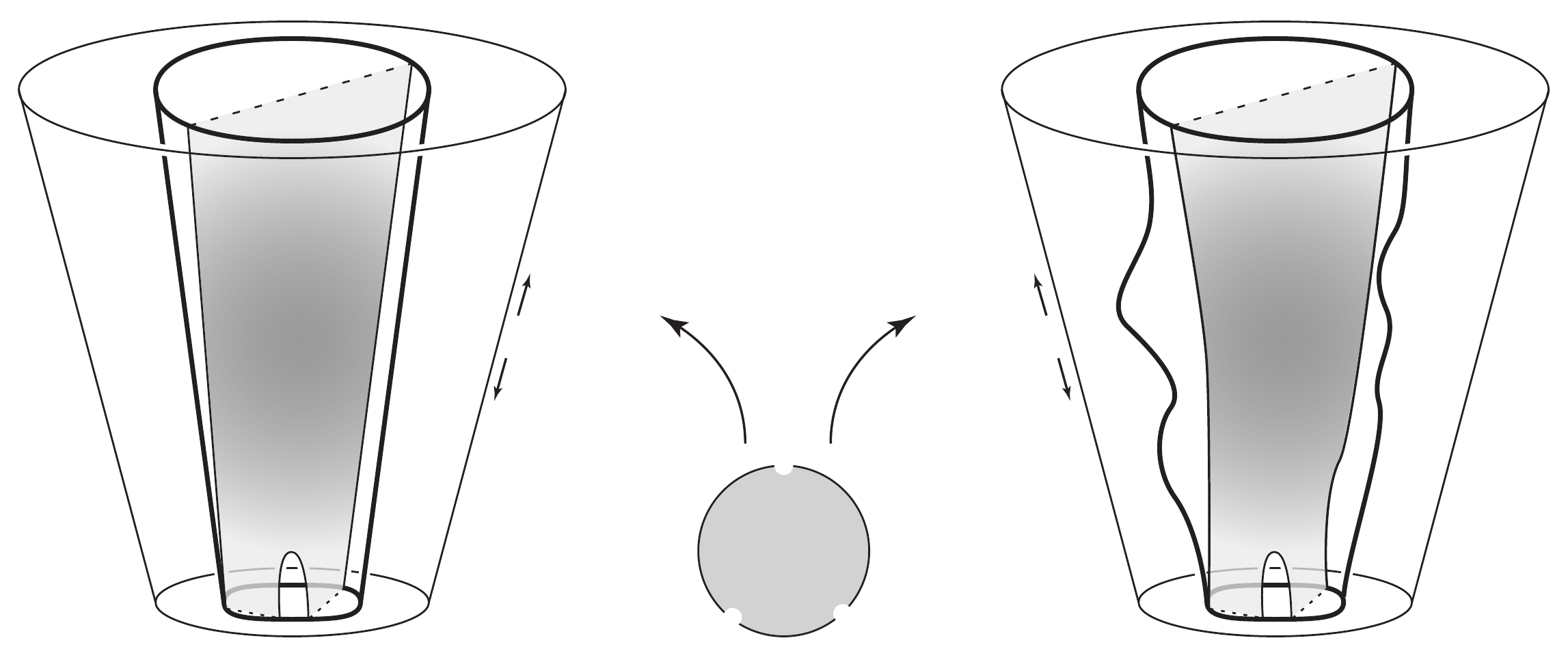}}
\caption{Schematic pictures of (a) a disk in the moduli space $\mathcal{M}^J_{\rr \times \leg}(a;\mathbf{b})$ and (b) a disk in  the moduli space $\mathcal{M}^J_{L}(a;\mathbf{b})$. }
\label{fig:j-disk}
\end{figure}
The differential of a Reeb chord $a \in \mathcal{R}_\leg$ counts $1$-dimensional moduli spaces, with that one dimension coming from translation invariance:
\begin{equation} \label{eq:lch-d}
\partial_\leg(a) = \sum_{ \dim \mathcal M^J_{\rr \times \leg} (a; \mathbf b) = 1} \# (\mathcal M^J_{\rr \times \leg} (a; \mathbf b)/\rr) \, \mathbf{b}.
\end{equation}
The differential is then extended to all of $\mathcal A_\leg$ via the Leibniz rule
and linearity.  

That the sum in Equation~(\ref{eq:lch-d}) is finite follows from the compactness of the moduli space and from an estimate on the \dfn{$\leg$-energy} of an element $u \in \mathcal{M}^J_{\rr \times \leg}$, defined to be
\[E_\leg(u) = \int_{D^2} u^*d\alpha.\]
The compatibility of $J$ with $d(e^t\alpha)$ and Stokes' Theorem imply:

\begin{lem} \label{lem:leg-energy}
	For a $J$-holomorphic disk $u \in M^J_{\rr \times \leg}(a; \mathbf{b})$, the $\leg$-energy satisfies
	\[ 0 \leq E_\leg(u) = h(a) - \sum h(b_i).\]
\end{lem}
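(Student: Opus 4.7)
The strategy is the standard two-part argument: first, pointwise non-negativity of the integrand $u^\ast d\alpha$ coming from the compatibility of $J$ with the contact form, then Stokes' theorem together with the asymptotic behavior at punctures to identify the boundary contribution with $h(a) - \sum h(b_i)$.

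For the pointwise non-negativity, I would work in the splitting $T(\rr \times J^1M) = \rr\langle \partial_s\rangle \oplus \rr\langle \partial_z \rangle \oplus \xi$, and fix $p \in D_k$ with a conformal basis $v_1, v_2 = jv_1$ of $T_pD_k$. Writing $du(v_1) = a\partial_s + b\partial_z + X$ with $X \in \xi$, the $J$-holomorphic condition and the relations $J\partial_s = \partial_z$, $J(\xi) = \xi$ give $du(v_2) = -b\partial_s + a\partial_z + JX$. Since $\alpha$ has no $ds$ component and $\partial_z$ is Reeb, both $\iota_{\partial_s} d\alpha = 0$ and $\iota_{\partial_z} d\alpha = 0$; hence
\[
(u^\ast d\alpha)(v_1, v_2) = d\alpha\bigl(du(v_1), du(v_2)\bigr) = d\alpha(X, JX) \geq 0,
\]
using that $d\alpha|_\xi$ tames $J|_\xi$. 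So $u^\ast d\alpha$ is a non-negative area form on $D_k$, which gives $E_\leg(u) \geq 0$.

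For the equality, I would first note that $\alpha$ is smooth on $\rr \times J^1M$ so the integrand extends across the punctures as needed, and apply Stokes' theorem to a sequence of compact exhaustions of $D_k$ obtained by removing small neighborhoods of each $z_l$, taking the limit:
\[
E_\leg(u) = \int_{D_k} u^\ast d\alpha = \int_{\partial D_k} u^\ast \alpha - \sum_{l=0}^{k} \lim_{\epsilon \to 0} \int_{\gamma_{l,\epsilon}} u^\ast \alpha,
\]
where $\gamma_{l,\epsilon}$ is a small arc in $\partial D_k$ bounding a neighborhood of $z_l$, oriented consistently. On the remaining boundary arcs, $u$ maps into $\rr \times \leg$; since $\leg \subset J^1M$ is Legendrian, $\alpha$ pulls back to zero on $\rr \times \leg$, killing those terms.

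It then remains to evaluate the asymptotic contributions. Using strip-like coordinates $(\tau, \theta) \in [0, \infty) \times [0,1]$ near the positive puncture $z_0$, the hypothesis that $u$ is asymptotic to the trivial cylinder $\rr \times a$ over the Reeb chord $a$ means $u(\tau, \theta) \to (\tau + c, a(\theta))$ where $a(\theta)$ parametrizes the chord, so that $a^\ast \alpha = h(a)\, d\theta$. The induced orientation on the bounding arc at the positive puncture traces $a$ in its natural direction, contributing $+h(a)$ in the limit. The analogous computation at each negative puncture $z_l$ uses $(\tau, \theta) \in (-\infty, 0] \times [0,1]$ and asymptotics to $\rr \times b_l$, but with the reversed orientation convention, contributing $-h(b_l)$. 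Summing gives the stated identity. The main care point is just keeping track of the orientation conventions at the punctures; the underlying positivity and Stokes steps are standard once the splitting of the tangent space is set up.
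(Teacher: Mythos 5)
Your argument is correct and matches the route the paper indicates (its one-line justification cites exactly the compatibility of $J$ with $d(e^s\alpha)$ and Stokes' theorem, which are the two ingredients you use): pointwise non-negativity of $u^*d\alpha$ from the splitting $\rr\langle\partial_s\rangle\oplus\rr\langle\partial_z\rangle\oplus\xi$ and $J$-taming on $\xi$, then Stokes with vanishing of $u^*\alpha$ on the Lagrangian boundary and the asymptotic evaluation at punctures. One small slip in wording: the arcs $\gamma_{l,\epsilon}$ cutting off neighborhoods of the boundary punctures are interior half-circular arcs, not arcs lying in $\partial D_k$ (on the latter $u^*\alpha$ would already vanish by the Legendrian condition), but this does not affect the substance of the computation.
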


The facts that the differential satisfies $\partial^2=0$ and that the homology of the DGA is invariant under Legendrian isotopy follows from a compactification and gluing argument \cite{ees:high-d-analysis,ees:pxr}.  

\subsection{Linearized LCH and its $A_\infty$ Structure}

In order to extract readily computable information from the Legendrian Contact Homology DGA, we use Chekanov's technique of linearization from \cite{chv}, enhanced to a full $A_\infty$ algebra as in \cite{products}.     Note that linearization is not always possible \cite{fuchs:augmentations, fuchs-ishk, rulings} and that different linearizations may lead to different linearized (co)homologies \cite{melvin-shrestha}.

We begin the linearization process by introducing some notation for the differential. The differential is determined by its action on $A_\leg$, and its restriction to $A_\leg$ may be written as a sum $\partial = \sum_{i=0}^\infty \partial_i$ with $\partial_i$ mapping $A_\leg$ to $A_\leg^{\otimes i}$.  In particular, the obstruction to the linear part of the differential, $\partial_1$, being a differential on $A_\leg$ is $\partial_0$. The goal of the linearization technique is to change coordinates so that $\partial_0 = 0$.   

The key idea in Chekanov's linearization technique is an \dfn{augmentation},  a unital DGA map $
\varepsilon: (\mathcal A_\leg, \partial_\leg) \to (\ff_2, 0)$ supported in degree $0$. Given an 
augmentation $\varepsilon$ of $(\mathcal{A}_\leg, \partial_\leg)$, one can construct a change of 
coordinates on $\mathcal{A}_\leg$ by $\eta^\varepsilon(a) = a + \varepsilon(a)$ and a new 
differential $\partial^\varepsilon = \eta^\varepsilon \partial_\leg (\eta^\varepsilon)^{-1}$.  It is easy to 
check that $\partial^\varepsilon_0 = 0$. We use the adjoint $d^\varepsilon$ of $\partial^
\varepsilon_1$ to define a differential on the the dual vector space $A^*_\leg$.   The cohomology 
groups of $(A^*_\leg, d^\varepsilon)$ are denoted $LCH^*(\leg, \varepsilon)$ and referred to as the 
\dfn{Linearized Legendrian  Contact Cohomology  (with respect to $\varepsilon$)}.  One may 
similarly define homology groups $LCH_*(\leg, \varepsilon)$ using the differential $\partial^
\varepsilon_1$, though we will not use these groups except in Example~\ref{ex:fund-class}, below.

At least some of the non-linear information of $\partial_\leg$ may be recovered in the linearized setting by introducing an $A_\infty$ algebra structure on $A^*_\leg$.  That is, we define a sequence of degree $1$, linear maps \[\mathbf{m}^\varepsilon = \{m_k^\varepsilon: (A_\leg^*)^{\otimes k} \to  A^*_\leg\}_{k \geq 1}\] 
to be the adjoints of the maps $\partial^\varepsilon_k$.

As shown in \cite{products}, the fact that $\partial_\leg^2 = 0$ implies that the sequence $\mathbf{m}^\varepsilon$ satisfies the $A_\infty$ relations
\[\sum_{i+j+k=n} m^\varepsilon_{i+1+k} \circ (1^{\otimes i} \otimes m^\varepsilon_j \otimes 1^{\otimes k}) = 0,\]
 for all $n \in \nn$.  In particular, $m_1^\varepsilon$ is a differential on $A^*_\leg$ (in fact, it is $d^\varepsilon$).  It is a standard fact that $\mathbf{m}^\varepsilon$ descends to an $A_\infty$ structure $\boldsymbol{\mu}^\varepsilon$ on the cohomology $LCH^*(\leg, \varepsilon)$ with $\mu^\varepsilon_1 = 0$ and with $\mu_2^\varepsilon$ giving an associative product; see \cite{kadeishvili}.

\begin{rem} \label{rem:lch-direct-defn}
 One can also define the codifferential operator $d^\varepsilon$ directly on the generators by:
\begin{equation} \label{eq:codiff}
d^\varepsilon(c) = \sum_{\dim \mathcal M^J_{\rr \times \leg} (a; \mathbf{b}c\mathbf{d}) = 1} \#(\mathcal M^J_{\rr \times \leg} (a; \mathbf b c \mathbf d)/\rr) \, \varepsilon(\mathbf b) \varepsilon(\mathbf d) a.
\end{equation}
Note, in particular, that the codifferential goes ``up'' the symplectization, while the differential $\partial_\leg$ goes ``down.''

Similarly, one may define the $A_\infty$ operations directly:
\begin{multline*}
m_k^\varepsilon(c_1, \ldots,  c_k) = \\ 
\sum_{\dim \mathcal M^J_{\rr \times \leg} (\cdots) = 1} \#(\mathcal M^J_{\rr \times \leg} (a; \mathbf{b}_0 c_1 \mathbf{b}_1 \cdots \mathbf{b}_{k-1} c_k \mathbf{b}_k)/\rr) \, \varepsilon(\mathbf{b}_0) \cdots \varepsilon(\mathbf{b}_k) a.
\end{multline*}
\end{rem}

\begin{rem}
	The grading convention in the definition of $A_\infty$ algebra above is not standard:  one usually wants $m_k$ to be a map of degree $2-k$.  As remarked in \cite{products} and as carried out in \cite{bc:bilinear}, one could fix this by working with the grading-shifted complex $A_\leg^*[1]$.  Further, we avoid the usual sign conventions for the $A_\infty$ relation as we are working over $\ff_2$. See \cite{fooo, seidel:fukaya} for a discussion of the standard grading and sign conventions for $A_\infty$ algebras.
\end{rem}

\begin{rem}
	Following \cite{bc:bilinear}, one can use \emph{two} augmentations $\varepsilon_1$ and $\varepsilon_2$ to  define the \dfn{Bilinearized Legendrian Contact (Co)homology}.  The underlying vector space is the same as above, but  the definition of the codifferential in Equation~(\ref{eq:codiff}) replaces 
 the terms $\varepsilon(\mathbf b) \varepsilon(\mathbf d)$ with $\varepsilon_1(\mathbf b) \varepsilon_2(\mathbf d)$.   Bourgeois and Chantraine show that the Bilinearized Legendrian Contact Homology may be enriched into an $A_\infty$ category called the \dfn{Augmentation Category}. 
 
All of the constructions developed later in this section also apply to the Bilinearized Legendrian 
Contact Cohomology and the Augmentation Category without significant change,   though we stay in 
the $A_\infty$ algebra context for ease of notation. The definition of the augmentation category may 
be adjusted to a \emph{unital} augmentation category \cite{nrssz:aug-sheaf}, but the original augmentation 
category is more suitable for the capacities to be defined below as it has a fundamental class instead 
of a unit.\end{rem}

\subsection{Capacities for Legendrian Submanifolds}
\label{ssec:leg-capacity}

In this subsection, we will associate to a fixed Legendrian submanifold $\leg$ and an augmentation $\varepsilon$ of its Legendrian Contact Homology DGA a set of  elements from
 $(0, \infty]$ called capacities.  These quantities are similar to capacities defined by Viterbo \cite{viterbo:generating} for a Lagrangian submanifold and by the authors \cite{josh-lisa:cap} for slices of a ``flat at infinity'' Lagrangian.  Keep in mind that these capacities are attached to a fixed Legendrian submanifold, and are \emph{not} invariant under Legendrian isotopy.  

The first step is to define a filtration on $A_\leg$. For any $w \in \rr$, define the sets 
\[\mathcal{R}_\leg^w = \{ \gamma \in \mathcal R_{\leg}\; : \; h(\gamma)  \geq w\},\]
where, as in (\ref{eqn:height}), $h(\gamma) = \int_\gamma \alpha.$
Let $F^wA^*_\leg$ be the graded vector space with basis $\mathcal{R}_\leg^w$.  By Lemma~\ref{lem:leg-energy}, $F^wA^*_\leg$ is a subcomplex of $A^*_\leg$ with respect to the differential $d^\varepsilon$ for any augmentation $\varepsilon$.  Hence the filtration descends to the Linearized Legendrian Contact Cohomology to define {\bf Filtered Linearized Legendrian Contact Cohomology},
  $LCH_w^*(\leg, \varepsilon)$,  the cohomology groups of the quotient $A^*_\leg / F^w A^*_\leg$.

\begin{rem}
	The filtration $F^wA^*_\leg$ is different from the one defined on the $A_\infty$ algebra associated to a Lagrangian in \cite[\S3.2]{fooo}, as we have no need for Novikov ring coefficients.
\end{rem}

Consider the projection $p^w: A^*_\leg \to A^*_\leg / F^w A^*_\leg$.  We may use $p^w$ to calculate the height of a Reeb chord $b$:
\[h(b) = \sup\{w : p^w(b) = 0\}.\]
The capacity of a cohomology class is simply an extension of this idea of height.
 Let $P^w: LCH^*(\leg, \varepsilon) \to LCH_w^*(\leg, \varepsilon)$ denote the map induced by $p^w$.  For $w$ close to zero, $P^w$ is the zero map, while for sufficiently large $w$, $P^w$ is an isomorphism.  Thus, for nonzero $\theta$, the set $\{w : P^w(\theta) = 0\}$ is non-empty and bounded above.

\begin{defn} \label{defn:capacity}
	Given a Legendrian submanifold $\leg \subset J^1M$, an augmentation $\varepsilon$ of its Legendrian Contact Homology DGA, and 
 $\theta \in LCH^*(\leg, \varepsilon)$, the \dfn{capacity} $c(\leg, \varepsilon, \theta)$ is defined to be:
$$ c(\leg, \varepsilon, \theta) =
	\begin{cases}
	  \sup \{w : P^w(\theta) = 0\}, & \theta \neq 0 \\
	  \infty, & \theta =0.
	\end{cases}
	$$
\end{defn}

A key fact for computing capacities is the following:

\begin{lem} \label{lem:cap-Reeb-length} For nonzero $\theta$,  $c(\leg, \varepsilon, \theta)$ is always the height of some Reeb chord.
\end{lem}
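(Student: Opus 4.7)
The plan is to exploit the fact that the filtration $\{F^w A^*_\leg\}_{w \in \rr}$ is locally constant in $w$, jumping only at the (discrete) values $w = h(\gamma)$ for some Reeb chord $\gamma$ of $\leg$. Since $c(\leg,\varepsilon,\theta)$ is defined as a supremum taken over a family of maps that only change at these jumps, one expects $c$ to coincide with a jump value. The argument then takes the form of a contradiction: if $c$ missed every Reeb chord height, then on a small window around $c$ nothing in the definition changes, and one could nudge past $c$ without disturbing the value of $P^w(\theta)$, contradicting the supremum.

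First I would record the monotonicity that makes the sup well-behaved. For $w' \leq w$ the inclusion $F^{w'} A^*_\leg \supseteq F^w A^*_\leg$ gives a factorization of the quotient map $p^{w'}$ through $p^w$, and hence on cohomology $P^w(\theta) = 0 \Rightarrow P^{w'}(\theta) = 0$. Therefore $S := \{w \in \rr : P^w(\theta) = 0\}$ is a down-set in $\rr$; it contains $(-\infty,0]$ because all Reeb chord heights are positive (so $F^w A^*_\leg = A^*_\leg$ for $w \leq 0$, making $P^w$ the zero map), and since $\theta \neq 0$ and $P^w$ is an isomorphism once $w$ exceeds every Reeb chord height, $S$ is bounded above. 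Hence $c = \sup S$ is finite.

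Next I would prove that $c$ must be a Reeb chord height. Assume otherwise. Since $\leg$ is chord generic and compact, $\mathcal R_\leg$ has a discrete (in our setting, finite) set of heights, so there exists $\epsilon > 0$ with no Reeb chord of height in $[c-\epsilon, c+\epsilon]$. On that closed interval the set $\mathcal R_\leg^w$ is independent of $w$, so $F^w A^*_\leg$, the quotient complex $A^*_\leg / F^w A^*_\leg$, and the map $P^w$ are all literally the same for every $w$ in $[c-\epsilon, c+\epsilon]$. Since $(c-\epsilon, c) \subset S$, we may pick such a $w$ with $P^w(\theta) = 0$; by constancy, $P^{c+\epsilon/2}(\theta) = 0$ as well, contradicting $c = \sup S$. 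Thus $c$ lies in the Reeb chord spectrum.

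The only genuinely delicate point is the local constancy of $F^w A^*_\leg$, which relies on the discreteness of the Reeb chord heights; in the chord-generic compact setting considered here this is automatic, and no further analytic input is required. One small bookkeeping issue to watch is the choice of inequality ($h(\gamma) \geq w$ rather than $>$) in the definition of $\mathcal R_\leg^w$, which determines whether $S$ is of the form $(-\infty,c]$ or $(-\infty,c)$; the contradiction above does not depend on that choice, so the conclusion that $c$ is a Reeb chord height holds regardless.
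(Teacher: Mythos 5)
Your argument is correct and is the natural, self-contained version of the standard argument; the paper itself only says the proof is "analogous to Lemma 4.5 in \cite{josh-lisa:cap}" and does not reproduce it, so your write-up is doing the work the paper elides. The two points you rely on (monotonicity of $w \mapsto F^wA^*_\leg$ giving that $S = \{w : P^w(\theta) = 0\}$ is a down-set that is nonempty and bounded above, and local constancy of the filtration away from the finite set of chord heights for a compact chord-generic Legendrian) are exactly the ingredients the reference uses, and the contradiction at the supremum is sound. Your closing remark about the $\geq$ versus $>$ convention is a worthwhile check and correctly concluded to be immaterial.
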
 

The proof of this lemma is analogous to proof of Lemma 4.5 in \cite{josh-lisa:cap}.  

\begin{ex} \label{ex:unknot-computation}
Let us compute capacities for the Legendrian unknot $U(v)$ and its higher-dimensional generalizations  $U^n(v) \subset \rr^{2n+1}$.  The submanifold $U^n(v)$ has a single Reeb chord $\gamma$ of grading $n$ and height $v$.  For $n \geq 2$, it follows for grading reasons that $\partial \gamma = 0$, and the same result holds for $n=1$ if one computes as in \cite{chv}. Thus, we may use the trivial augmentation $\varepsilon$, and we can see that $LCH^*(U^n(v), \varepsilon)$ is generated by $[\gamma]$ in degree $n$.  Since $p^w(\gamma)$ vanishes for $w < h(\gamma) = v$ and is nonzero for $w > v$, we see that
	\[c(U^n(v), \varepsilon, [\gamma]) = v.\]
\end{ex}

\begin{ex} \label{ex:fund-class}
	For a connected Legendrian submanifold $\leg \subset \rr^{2n+1}$ with augmentation $\varepsilon$, there is a distinguished class $\lambda \in LCH^n(\leg, \varepsilon)$ called the \dfn{fundamental class}.\footnote{The fundamental class in this paper is hom-dual to the usual definition of the fundamental class of \cite{high-d-duality, duality}.  See below.}  To define $\lambda$, we use the duality exact sequence of \cite{high-d-duality}:
	\[ \xymatrix{
		\cdots \ar[r] & LCH_0(\leg, \varepsilon) \ar[r]^-{\rho_*} & H_0(\leg) \ar[r]^-{\sigma_*} & LCH^n(\leg, \varepsilon) \ar[r] & \cdots
	}\]
Theorem 5.5 of \cite{high-d-duality} shows that $\rho_*$ is trivial in degree $0$, and hence $\sigma_*$ is injective.  Define the fundamental class $\lambda \in LCH^n(\leg, \varepsilon)$ to be the image of the generator $[pt]$ of $H_0(\leg)$ (remember that we are using $\ff_2$ coefficients, so there is only one generator).  The usual definition of a fundamental class $\eta \in LCH_n(\leg, \varepsilon)$ is similar:  the map $\rho_*$ is onto in degree $n$, and $\eta$ has the property that $\rho_*(\eta) = [\leg]$, where the latter is the fundamental class in $H_n(\leg)$; note that this is equivalent to the definition in \cite[\S7.4]{ehk:leg-knot-lagr-cob} once one unwinds the definition of $\rho_*$. Using Theorem 1.1 of \cite{high-d-duality}, we see that
\begin{equation} \label{eq:fund-classes}
	\langle \lambda, \eta \rangle = \langle PD[pt], [\leg] \rangle = 1,
\end{equation}
where $\langle, \rangle$ is the usual pairing between homology and cohomology and $PD$ is Poincar\'e duality on $\leg$.

We define the \dfn{fundamental capacity} of $\leg$ with respect to the augmentation $\varepsilon$ to be $c(\leg, \varepsilon, \lambda)$.  The computation in Example~\ref{ex:unknot-computation}, above, is an example of a fundamental capacity.
\end{ex}

\section{Induced Maps from Lagrangian Cobordism} 
\label{sec:cobord-maps}

In this section, we will establish a lower bound for the length of a Lagrangian cobordism between two fixed Legendrians.  The bound arises from understanding 
induced maps between the filtered Linearized Legendrian Contact Cohomology of the Legendrians at the ends.  
   In particular, we   
establish the lower bounds for a Lagrangian cobordism
stated  in Theorem~\ref{thm:mono}, which is  generalized   to an $A_\infty$ version in Theorem~\ref{thm:a-infty-mono}.

\subsection{DGA cobordism maps}

That Legendrian Contact Homology has the structure of a field theory with respect to Lagrangian cobordism goes back to its initial conception \cite{egh}; the analytic details were eventually worked out in \cite[\S B]{ekholm:rsft} and \cite[\S 11]{compactness} and gathered together in \cite[\S 3]{ehk:leg-knot-lagr-cob}.
While the theory described in \cite[\S3]{ehk:leg-knot-lagr-cob} specifies $\rr^3$ as the ambient contact manifold, the analysis underlying the results holds in the more general case of $J^1M$.   In this section, we sketch the constructions of maps induced by a Lagrangian cobordism; summarizing sketches of the theory with goals similar to ours appear in \cite{c-dr-g-g, rizell:lifting, ehk:leg-knot-lagr-cob}. 

To fit Legendrian Contact Homology into a field theory, we must define a DGA map associated to a Lagrangian cobordism $L$ from
$\leg_-$ to $\leg_+$:
\[ \phi_L: (\mathcal{A}_{\leg_+}, \partial_{\leg_+}) \to (\mathcal{A}_{\leg_-}, \partial_{\leg_-}).\]
To define the map $\phi_L$, we introduce a new moduli space of $J$-holomorphic disks, where $J$ is an almost complex structure on the symplectization that is sufficiently generic, compatible, and cylindrical at the ends.  Given $a \in \mathcal{R}_{\leg_+}$ and a monomial $\mathbf{b} = b_1 \cdots b_k$ in $\mathcal{A}_{\leg_-}$, consider the set of $J$-holomorphic maps $u: D_k \to \rr \times J^1M$ that map the boundary of $D_k$ to $L$ and that satisfy asymptotic conditions similar to those in Section~\ref{sec:capacity}; see Figure~\ref{fig:j-disk}(b).  The moduli space $\mathcal{M}^J_L(a;\mathbf{b})$ is the set of such maps up to reparametrization, and it is, generically, a compact manifold \cite{ees:high-d-analysis,ees:pxr}.
The map $\phi_L$ may now be defined on the generators of $\mathcal{A}_{\leg_+}$ by:
\begin{equation} \label{eq:cob-map}
\phi_L(a) = \sum_{\dim \mathcal{M}^J_L(a;\mathbf{b}) = 0} \# \mathcal{M}^J_L(a;\mathbf{b}) \, \mathbf{b}.
\end{equation}

In Section~\ref{sec:capacity}, we explained that the finiteness of the sum in $\partial_\leg(a)$ follows from compactness and an examination of the $\Lambda$-energy of a $J$-holomorphic disk.  To prove that the sum in Equation~(\ref{eq:cob-map}) is finite, we use a notion of ``Lagrangian energy," which is a  
close relative to 
an energy introduced in \cite{c-dr-g-g}.   Given a Lagrangian cobordism $L$ that is cylindrical
outside $[s_-, s_+]$, consider the piecewise smooth function $\varphi: \rr \to [e^{s_-}, e^{s_+}]$
defined by
\[\varphi(t) = 
\begin{cases}
e^{s_-}, & t \leq s_-\\
e^t, &t \in [s_-, s_+] \\
e^{s_+}, & t \geq s_+.
\end{cases}
\]
The \dfn{$L$-energy of  a $J$-holomorphic disk} $u: (D_k, \partial D_k) \to (\rr \times J^1M, L)$
is defined to be:
\[E(u) = \int_D u^* d(\varphi \alpha).\]
 The $L$-energy depends on the choice of $s_\pm$, though this will not matter in applications.

To better understand the relationship between the $L$-energy and the cobordism map $\phi_L$, we introduce the ``{cobordism action}" of a Reeb chord $\gamma$.  Suppose that a Lagrangian cobordism $L$ is cylindrical
outside of $[s_-, s_+]$, where $s_\pm$ are the same constants chosen for the definition of the $L$-energy. We then define the {\bf cobordism action} of a Reeb chord of $\leg_\pm$ to be scaled heights: 
\begin{equation}\label{eqn:cob-action}
\a(\gamma) = 
\begin{cases}
e^{s_+} h(\gamma), & \gamma \in \mathcal R_{\leg_+}\\
e^{s_-} h(\gamma), & \gamma \in \mathcal R_{\leg_-}.
\end{cases}
\end{equation}

As in Lemma~\ref{lem:leg-energy}, we may relate the $L$-energy of a $J$-holomorphic disk in $\mathcal{M}^J_L$ to the cobordism actions of the Reeb chords at its ends.

\begin{lem}[\cite{c-dr-g-g}] \label{lem:disk-energy} Given a Lagrangian cobordism $L$ from $\leg_-$ to $\leg_+$ that is cylindrical outside of $[s_-,s_+]$, 
the $L$-energy of $u \in \mathcal M_L^J(a; \mathbf b)$ satisfies
$$ 0 \leq E(u) = \a(a) - \sum_i \a(b_i).$$
\end{lem}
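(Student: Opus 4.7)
The plan is to apply Stokes' theorem to $E(u) = \int_D u^* d(\varphi\alpha)$ on a compact approximation of the punctured disk, combining pointwise positivity of the integrand for the inequality with a careful boundary analysis for the equality.

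For the inequality $E(u)\geq 0$, observe that $\varphi$ is piecewise smooth and split the integral according to the $u$-preimages of $(-\infty,s_-]\times Y$, $[s_-,s_+]\times Y$, and $[s_+,\infty)\times Y$. On the middle region $\varphi(s)=e^s$, so $d(\varphi\alpha)$ is the symplectization symplectic form $\omega=d(e^s\alpha)$ and $u^*\omega\geq 0$ pointwise by $\omega$-compatibility of $J$. On each cylindrical end $\varphi$ is a positive constant, so $d(\varphi\alpha)=\varphi\,d\alpha$; the same positivity argument for cylindrical $J$ (with $J(\xi)=\xi$ and $J(\partial_s)=\partial_z$) that underlies Lemma~\ref{lem:leg-energy} gives $u^* d\alpha\geq 0$ pointwise.

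For the equality, excise a small half-disk of radius $\epsilon$ around each puncture $z_i$ and apply Stokes' theorem to the resulting compact domain $D_k^\epsilon$, obtaining $E(u)=\lim_{\epsilon\to 0}\int_{\partial D_k^\epsilon} u^*(\varphi\alpha)$. The boundary decomposes into arcs mapping to $L$ together with small half-circles $\gamma_\epsilon^i$ around the punctures. On the arcs, note that $\varphi\alpha|_L=e^s\alpha|_L$: in the middle region the two forms agree directly, while on the cylindrical ends $\alpha|_L=0$ by the Legendrian condition so both sides vanish. Exactness of $L$ for $e^s\alpha$ writes $\varphi\alpha|_L=df$ for a primitive $f$ that is constant on each component of $\leg_\pm$ at the ends (the third condition of Definition~\ref{defn:cobordism}), and therefore each arc integral reduces to a difference of $f$-values at its endpoints. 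Closing up the punctured boundary by inserting segments along the Reeb chords produces a closed path on which the $df$ contributions from the arcs cancel against the integrals over the inserted chord segments, so the arc contributions telescope cleanly against the corresponding pieces of the half-circle limits.

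The remaining contribution comes from the half-circles themselves. Near the positive puncture $z_0$ the image of $u$ lies in $\{s\gg s_+\}$ where $\varphi\equiv e^{s_+}$, and asymptotic convergence of $u$ to the trivial cylinder $\rr\times a$ yields $\lim_{\epsilon\to 0}\int_{\gamma_\epsilon^0}u^*(\varphi\alpha)=-e^{s_+}h(a)=-\a(a)$, with the minus sign arising from the clockwise orientation of the small half-circle as the inner boundary of $D_k^\epsilon$. Analogously, each negative puncture $z_l$ contributes $+e^{s_-}h(b_l)=+\a(b_l)$, assembling to the claimed equality. The hard part will be the careful bookkeeping of boundary orientations at each puncture together with the consistency of the telescoping given that the primitive $f$ is only required to be locally constant on the components of the cylindrical ends — both standard SFT maneuvers but easy to mishandle.
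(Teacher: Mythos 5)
The paper does not actually prove this lemma---it is cited from \cite{c-dr-g-g} and used as a black box---so there is no paper proof to compare against. Evaluating your argument on its own terms: the overall plan (Stokes' theorem on an excised domain, pointwise positivity for the inequality, exactness on the Lagrangian boundary plus asymptotic convergence at the punctures for the equality) is the standard and correct approach. The positivity argument is fine.

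There are, however, two genuine problems in the equality part. First, you read condition~(3) of Definition~\ref{defn:cobordism} as saying the primitive $f$ is ``constant on each component of $\leg_\pm$.'' That is the wrong reading: the footnote to that definition explicitly says per-component constancy is \emph{automatic}, and that condition~(3) is the extra hypothesis needed when the ends are disconnected, namely that $f$ is a single global constant on $\{s<s_-\}$ and a single global constant on $\{s>s_+\}$. This matters. The total arc contribution in the limit is $\sum_i\bigl(f(q_i)-f(p_i)\bigr)$, where $p_i,q_i$ are the two endpoints of the Reeb chord at $z_i$; with only per-component constancy this sum need not vanish (the two ends of a chord can lie on different components of $\leg_\pm$ carrying different $f$-values), and the claimed formula would then pick up correction terms. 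Global constancy is precisely what makes each term $f(q_i)-f(p_i)$ vanish. Your ``telescoping against the inserted chord segments'' does not repair this: the chord segments are not in $L$, so $df$ is not defined along them, and the quantity $\sum\int_{A_i}u^*df$ is either zero (by global constancy) or an honest error term; it does not cancel against $\pm\a(\cdot)$. Second, your signs at the punctures are inconsistent with the statement you are trying to prove: you assign $-\a(a)$ to the positive puncture and $+\a(b_l)$ to each negative puncture, which assembles to $-\a(a)+\sum\a(b_i)$, the negative of the claimed equality. A check on the trivial strip $u(s,t)=(s,a(t))$, where one computes directly $E(u)=\int\varphi'(s)\,ds\,dt=(e^{s_+}-e^{s_-})h(a)$, shows that the positive end must contribute $+\a(a)$ and the negative end $-\a(b)$.
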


The finiteness of the sum in Equation~(\ref{eq:cob-map}) follows. That the map $\phi_L$ is, indeed, a chain map follows from compactness and gluing arguments in \cite[\S B]{ekholm:rsft} or in \cite[\S 11]{compactness}.  In all of our examples, the Lagrangian cobordisms will be diffeomorphic to cylinders over Legendrians with vanishing rotation number, and hence will have Maslov number zero.  Thus, we find ourselves in the setting of \cite{ekholm:rsft}, and the cobordism map has degree $0$.  Finally, if the cobordisms $L$ and $L'$ differ by an exact Lagrangian isotopy, then $\phi_L$ and $\phi_{L'}$ differ by a chain homotopy \cite{ekholm:rsft, ehk:leg-knot-lagr-cob}.

\subsection{$A_\infty$ cobordism maps}
\label{ssec:lin-cob}

 The material in this section has not been written down explicitly before, but its existence should be clear to those familiar with the theory.
 
In the presence of a Lagrangian cobordism $L$ from $\leg_-$ to  $\leg_+$, we simplify notation by 
writing $\mathcal{A}_\pm$ for $\mathcal{A}_{\leg_\pm}$, etc. Given an augmentation $\varepsilon_-$ 
of $(\mathcal{A}_-, \partial_-)$, the Lagrangian cobordism induces an augmentation $\varepsilon_+ = 
\varepsilon_- \circ \phi_L$ on $(\mathcal{A}_+, \partial_+)$.  We may use these augmentations to 
change coordinates and define a new cobordism map $\phi_{L, \varepsilon_-}: (\mathcal{A}_+, 
\partial^{\varepsilon_+}) \to (\mathcal{A}_-, \partial^{\varepsilon_-})$. If we decompose the restriction 
of $\phi_{L, \varepsilon_-}$ to $A_+$ by components of $TA_- = \bigoplus_{i = 0}^\infty A_-^{\otimes i}$, it is straightforward to check that the constant term vanishes.\footnote{This is dual to the results of 
\cite[\S5.2]{fooo} on removing obstructions to $A_\infty$ maps being ``strict'' using bounding 
cochains.} By taking the adjoints of the components of $\phi_{L, \varepsilon_-}$, we obtain a (degree 
$0$) $A_\infty$ map $\boldsymbol{\psi}^{L,\varepsilon_-}: (A_-^*, \mathbf{m}^-) \to (A_+^*, \mathbf{m}^+)$.  Recall that an $A_\infty$ map is really a sequence of maps
 $\psi^{L, \varepsilon_-}_k: (A_-^*)^{\otimes k} \to A_+^*$ that satisfy: 
\begin{equation} \label{eq:a-infty-map}
	\sum_{i+j+k=n} \psi^{L,\varepsilon_-}_{i+1+k} \circ (1^{\otimes i} \otimes m^-_j \otimes 1^{\otimes k}) = \sum_{\substack{1 \leq r \leq n \\ i_1 +\cdots +i_r = n}} m^+_r \circ (\psi^{L, \varepsilon_-}_{i_1} \otimes \cdots \otimes \psi^{L, \varepsilon_-}_{i_r}).
\end{equation}
\begin{rem} \label{rem:cob-direct-defn}
	As in Remark~\ref{rem:lch-direct-defn}, we may directly define the sequence of
	maps making up the $A_\infty$ map $\boldsymbol{\psi}^{L,\varepsilon_-}$ as follows:
	\begin{multline*}
\psi^{\varepsilon_-}_k(c_1, \ldots,  c_k) = \\ 
\sum_{\dim \mathcal M^J_{L} (\cdots) = 0} \#\mathcal M^J_{L} (a; \mathbf{b}_0 c_1 \mathbf{b}_1 \cdots \mathbf{b}_{k-1} c_k \mathbf{b}_k) \, \varepsilon_-(\mathbf{b}_0) \cdots \varepsilon_-(\mathbf{b}_k) a.
\end{multline*}
\end{rem}

The $A_\infty$ map $\boldsymbol{\psi}^{L, \varepsilon_-}$ induces a map on homology denoted $\boldsymbol{\Psi}^{L, \varepsilon_-}$.  In particular, we have a sequence of maps:
\begin{equation}
\Psi_k^{L, \varepsilon_-}: LCH^*(\leg_-, \varepsilon_-)^{\otimes k} \to LCH^*(\leg_+, \varepsilon_+).
\end{equation}

\subsection{Non-triviality of the Cobordism Map }
\label{ssec:non-trivial}

In our applications in Section~\ref{sec:applications}, we will need use classes in $LCH^*(\leg_-, \varepsilon_-)$ that are not in the kernel of
$\Psi_k^{L,\varepsilon_-}$.   The following proposition, which essentially re-contextualizes results from \cite{c-dr-g-g-cobordism} and \cite{ehk:leg-knot-lagr-cob}, will be used in Section~\ref{sec:applications}.

\begin{prop}\label{prop:non-ker}  Suppose $L$ is a Lagrangian cobordism from $\leg_-$ to $\leg_+$ that is cylindrical outside $[s_-, s_+]$ and  $\leg_-$ has
augmentation $\varepsilon_-$.  Let 
$\Psi_1^{L, \varepsilon_-}: LCH^*(\leg_-, \varepsilon_-) \to LCH^*(\leg_+, \varepsilon_+)$
be the associated cobordism map.  
\begin{enumerate}
\item  If $\leg_\pm$ are connected and  $\lambda_- \in LCH^*(\leg_-, \varepsilon_-) $ is the fundamental class, then
$\Psi_1^{L, \varepsilon_-}(\lambda _-) \neq 0$.
\item If  $\bar{L}$ denotes $L \cap \{ s \in [s_-, s_+] \}$ and the pair $(\bar{L}, \leg_-)$ is acyclic, then
$\Psi_1^{L, \varepsilon_-}$ is an isomorphism.
\end{enumerate}
\end{prop}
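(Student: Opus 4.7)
For part (1), the plan is to establish the stronger statement $\Psi_1^{L, \varepsilon_-}(\lambda_-) = \lambda_+$, where $\lambda_+ \in LCH^n(\leg_+, \varepsilon_+)$ is the fundamental class of the top Legendrian with respect to the induced augmentation. Since $\sigma_+: H_0(\leg_+) \to LCH^n(\leg_+, \varepsilon_+)$ is injective by the duality exact sequence of \cite{high-d-duality}, the class $\lambda_+ = \sigma_+([pt])$ is nonzero, so this stronger statement already implies the desired non-vanishing. The central task is therefore to prove commutativity of the naturality square
\[ \xymatrix{ H_0(\leg_-) \ar[r]^-{\sigma_-} \ar[d]_{\cong} & LCH^n(\leg_-, \varepsilon_-) \ar[d]^{\Psi_1^{L, \varepsilon_-}} \\ H_0(\leg_+) \ar[r]^-{\sigma_+} & LCH^n(\leg_+, \varepsilon_+), } \]
whose left vertical map is the isomorphism induced by the inclusions $\leg_\pm \hookrightarrow \bar L$ (well-defined because $\leg_\pm$ and $\bar L$ are all connected). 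Chasing $[pt]$ around this diagram yields $\Psi_1^{L, \varepsilon_-}(\lambda_-) = \lambda_+$. The main obstacle is establishing this commutativity: it requires re-interpreting the geometric construction of $\sigma_*$ from \cite{high-d-duality} in the presence of a cobordism, with the key analytic input being the identification of the rigid $J$-holomorphic disks on $\bar L$ with a single positive puncture at a degree-$n$ Reeb chord of $\leg_+$ — this is essentially the count carried out in \cite[\S 7.4]{ehk:leg-knot-lagr-cob} to identify fundamental classes for augmentations coming from fillings, adapted to the cobordism setting with two ends.

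For part (2), the plan is to invoke the long exact sequence relating linearized Legendrian Contact Cohomology at the two ends of a Lagrangian cobordism to the singular cohomology of the pair $(\bar L, \leg_-)$ developed in \cite{c-dr-g-g-cobordism}. Such a sequence takes the form
\[ \cdots \to H^k(\bar L, \leg_-) \to LCH^k(\leg_-, \varepsilon_-) \xrightarrow{\Psi_1^{L, \varepsilon_-}} LCH^k(\leg_+, \varepsilon_+) \to H^{k+1}(\bar L, \leg_-) \to \cdots. \]
Under the acyclicity hypothesis every relative cohomology term vanishes, so $\Psi_1^{L, \varepsilon_-}$ lies between zero maps in the exact sequence and is therefore simultaneously injective and surjective. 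The principal obstacle here is aligning the conventions of \cite{c-dr-g-g-cobordism} (direction of the cobordism map, cohomological versus homological indexing, which augmentation is used to define the sequence) with those in the present paper; once the precise form of the long exact sequence is matched to the setup here, the conclusion follows immediately from exactness.
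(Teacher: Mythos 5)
Your approach to part (2) coincides with the paper's: both invoke the long exact sequence of \cite[Theorem 1.6]{c-dr-g-g-cobordism} and conclude by exactness once acyclicity kills the relative terms. (The paper writes it with homology groups $H_{n+1-k}(\bar L, \leg_-)$ rather than cohomology, but over $\ff_2$ this distinction is immaterial and the acyclicity hypothesis annihilates either form.)

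For part (1), your route differs from the paper's, and the difference matters. You propose to show the stronger statement $\Psi_1^{L,\varepsilon_-}(\lambda_-) = \lambda_+$ by establishing a naturality square relating $\sigma_-$ and $\sigma_+$ via $\Psi_1^{L,\varepsilon_-}$. The paper avoids ever needing to know where $\lambda_-$ lands. It instead passes to the adjoint map $\Phi^1_{L,\varepsilon_-}: LCH_*(\leg_+,\varepsilon_+) \to LCH_*(\leg_-,\varepsilon_-)$, cites \cite[Theorem 7.7]{ehk:leg-knot-lagr-cob} for the statement that $\Phi^1_{L,\varepsilon_-}(\eta_+) = \eta_-$ in homology, and then uses the pairing identity $\langle \lambda_-, \eta_- \rangle = 1$ (Equation~(\ref{eq:fund-classes}), coming from \cite[Theorem~1.1]{high-d-duality}) to conclude $\langle \Psi_1^{L,\varepsilon_-}(\lambda_-), \eta_+ \rangle = 1 \neq 0$. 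This is a purely algebraic manipulation once the homology statement is in hand, whereas your argument demands verifying that the geometric construction of $\sigma_*$ commutes with the cobordism map --- a step you correctly flag as the main obstacle but then wave at via ``essentially the count carried out in \cite[\S7.4]{ehk:leg-knot-lagr-cob}.'' That commutativity is plausible and morally encodes the same geometric information as EHK's Theorem 7.7, but it is not literally what they prove, and establishing it would amount to redoing a moduli-space argument rather than citing one. The paper's adjoint-plus-pairing trick is the more economical path, buying exactly what is needed ($\Psi_1(\lambda_-) \neq 0$) from a theorem already stated in the form required. If you do want the stronger conclusion $\Psi_1(\lambda_-) = \lambda_+$, note that the pairing argument alone does not give it in general, since $\langle \cdot, \eta_+ \rangle$ need not be a perfect pairing on $LCH^n$; your naturality argument would genuinely be needed for that.
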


\begin{proof}
  Let $\Phi^1_{L, \varepsilon_-}: LCH_*(\leg_+, \varepsilon_+) \to LCH_*(\leg_-, \varepsilon_-)$ denote the adjoint of $\Psi_1^{L,\varepsilon_-}$.  
  Under the hypothesis that $\leg_+$ is connected, we can consider its (homology) fundamental class, $\eta_+ \in LCH_n(\leg_-, \varepsilon_-)$.  As shown in \cite[Theorem 7.7]{ehk:leg-knot-lagr-cob},  $\Phi^1_{L, \varepsilon_-}(\eta_+)$ is the homology fundamental class in $LCH_n(\leg_-, \varepsilon_-)$.  Using Equation~(\ref{eq:fund-classes}), we have:
\begin{align*}
	1 &= \langle \lambda_-, \Phi^1_{L, \varepsilon_-}(\eta_+) \rangle \\
	&= \langle \Psi_1^{L, \varepsilon_-}(\lambda_-), \eta_+ \rangle.
\end{align*}
In particular, the fundamental class $\lambda_-$ is not in the kernel of $\Psi^{L, \varepsilon_-}_1$.  

Statement (2) follows immediately from a long exact sequence of \cite[Theorem 1.6]{c-dr-g-g-cobordism}, which relates  relative homology groups of a Lagrangian
cobordism to the linearized
Legendrian Contact Cohomology groups of its ends:
 \[ \xymatrix@=15pt{
		\cdots \ar[r] & H_{n+1-k}\left(\bar{L}, \leg_- \right) 
		 \ar[r] & LCH^k (\leg_-, \varepsilon_-) \ar[r]^-{\Psi_1^{L, \varepsilon_-}} & LCH^k(\leg_+, \varepsilon_+) \ar[r] & \cdots .
	}\]
\end{proof}

\subsection{Cobordisms maps on filtered complexes}
\label{ssec:lagr-capacity}

We now have the tools  to relate the capacities at the top and bottom of a Lagrangian cobordism.  We will prove Theorem~\ref{thm:a-infty-mono}, from which Theorem~\ref{thm:mono} will follow as an immediate corollary.

The first step in the proof is to show that the $A_\infty$ cobordism maps $\boldsymbol{\psi}^{L,\varepsilon_-}$ respect the filtrations on $A^*_\pm$.
 
\begin{lem}  \label{lem:filtered-map}  Suppose $L$ is a Lagrangian cobordism
from $\leg_-$ to $\leg_+$ that is cylindrical outside of $[s_-, s_+]$; let
$\psi^{L, \varepsilon_-}_k: (A_-^*)^{\otimes k} \to A_+^*$ be the associated $A_\infty$ maps.
	The image of $F^{w_1/e^{s_-}}A^*_- \otimes \cdots \otimes F^{w_k/e^{s_-}}A^*_-$ under $\psi^{L,\varepsilon_-}_k$ lies in $F^{(w_1 + \cdots + w_k)/e^{s_+}}A^*_+$.
\end{lem}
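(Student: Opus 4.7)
The plan is to unpack the direct formula for $\psi^{L,\varepsilon_-}_k$ given in Remark~\ref{rem:cob-direct-defn} and then apply the energy estimate of Lemma~\ref{lem:disk-energy}. Take generators $c_i$ with $h(c_i) \geq w_i/e^{s_-}$ so that $c_i \in F^{w_i/e^{s_-}}A^*_-$; linearity reduces us to this case. A Reeb chord $a \in \mathcal{R}_{\leg_+}$ appears in $\psi^{L,\varepsilon_-}_k(c_1,\ldots,c_k)$ with nonzero coefficient only if there exist monomials $\mathbf{b}_0,\ldots,\mathbf{b}_k$ in $\mathcal{A}_-$ with $\varepsilon_-(\mathbf{b}_j) \neq 0$ for each $j$ and a rigid disk
\[ u \in \mathcal{M}^J_L\bigl(a;\,\mathbf{b}_0 c_1 \mathbf{b}_1 \cdots \mathbf{b}_{k-1} c_k \mathbf{b}_k\bigr). \]

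Let $\a$ be the cobordism action defined in Equation~(\ref{eqn:cob-action}), and write $\a(\mathbf{b}_j) = \sum_l \a(b_{j,l})$ where the $b_{j,l}$ are the letters of $\mathbf{b}_j$. Lemma~\ref{lem:disk-energy} then gives
\[ 0 \leq E(u) = \a(a) - \sum_{i=1}^k \a(c_i) - \sum_{j=0}^k \a(\mathbf{b}_j). \]
Because every Reeb chord has positive height, each $\a(b_{j,l}) = e^{s_-} h(b_{j,l})$ is nonnegative, so the last sum may be discarded to yield $\a(a) \geq \sum_{i=1}^k \a(c_i)$. Substituting $\a(a) = e^{s_+} h(a)$ and $\a(c_i) = e^{s_-} h(c_i)$, and using $h(c_i) \geq w_i/e^{s_-}$, we conclude
\[ e^{s_+} h(a) \;\geq\; \sum_{i=1}^k e^{s_-} h(c_i) \;\geq\; \sum_{i=1}^k w_i, \]
so $h(a) \geq (w_1+\cdots+w_k)/e^{s_+}$, i.e.\ $a \in F^{(w_1+\cdots+w_k)/e^{s_+}}A^*_+$.

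There is no real obstacle here: the lemma is essentially a bookkeeping consequence of the energy identity in Lemma~\ref{lem:disk-energy} combined with positivity of Reeb-chord heights. The only mild subtleties are (i) remembering that the action scales differently at the two ends of the cobordism, which is precisely what produces the asymmetric $e^{s_-}$ versus $e^{s_+}$ in the filtration shift, and (ii) noting that the presence of the augmentation factors $\varepsilon_-(\mathbf{b}_j)$ does not affect the estimate because the associated chord heights only enter as nonnegative terms that are thrown away.
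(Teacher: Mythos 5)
Your proof is correct and follows the same approach as the paper: reduce to generators, invoke the energy identity of Lemma~\ref{lem:disk-energy} together with the direct formula of Remark~\ref{rem:cob-direct-defn}, discard the nonnegative contributions of the augmented monomials $\mathbf{b}_j$, and unwind the definition of the cobordism action. The paper compresses the discarding of the $\mathbf{b}_j$ terms into ``Lemma~\ref{lem:disk-energy} and the formula for the map \ldots\ show that $\a(a) \geq \sum_i \a(b_i)$,'' whereas you spell out explicitly that positivity of Reeb-chord heights is what justifies it --- a small but welcome clarification.
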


\begin{proof}  Recall that $F^{w_i/e^{s_-}}A^*_-$ is generated by the Reeb chords $b$
satisfying $h(b) \geq w_i/e^{s_-}$.  Thus it suffices to show that 
given a set of Reeb chords $\{b_1, \ldots, b_k\}$ with 
$b_i \in F^{w_i/e^{s_-}}A^*_-$, if $a$ appears as a term in  
$\psi^{L,\varepsilon_-}_k(b_1, \ldots, b_k)$, then $h(a)  \geq 
(w_1 + \cdots + w_k)/e^{s_+}$.  Given such $\{b_1, \ldots, b_k\}$ and $a$, 
 Lemma~\ref{lem:disk-energy} and the formula for the map $\psi^{L,\varepsilon_-}_k$ in Remark~\ref{rem:cob-direct-defn} show that
\begin{equation} \label{eq:filtered-action}
	\mathfrak{a}(a) \geq \sum_i \mathfrak{a}(b_i).
\end{equation}
We now use the definition of the cobordism action $\mathfrak{a}$, Equation~(\ref{eq:filtered-action}), and the fact that $h(b_i) \geq w_i/e^{s_-}$ to see that :
\begin{align*}
	h(a) &= e^{-s_+} \mathfrak{a} (a) \\
		&\geq e^{-s_+} \sum_i \mathfrak{a}(b_i) \\
		&= e^{s_- - s_+} \sum_i h(b_i) \\
		&\geq e^{-s_+} \sum_i w_i.
\end{align*}
It follows that  each summand of  $\psi^{L,\varepsilon_-}_k(b_1, \ldots, b_k)$ lies in $F^{(w_1 + \cdots + w_k)/e^{s_+}}A^*_+$, as desired.
\end{proof}

We are now ready to state and prove a more general form of Theorem~\ref{thm:mono}.

\begin{thm} \label{thm:a-infty-mono} Suppose $L$ is a  Lagrangian cobordism from $\leg_-$ to $\leg_+$ that is cylindrical outside of $[s_-,s_+]$.  For any  augmentation $\varepsilon_-$ of $(\mathcal{A}_-, \partial_-)$, let $\varepsilon_+ = \varepsilon_- \circ \phi_L$ denote the corresponding augmentation of $(\mathcal{A}_+, \partial_+)$.
Then, for any $\{\theta_1, \ldots, \theta_k\} \subset LCH^*(\leg_-, \varepsilon_-)$,  
	\[e^{s_-} \sum_i c\left(\leg_-,\epsilon_-,\theta_i\right) \leq e^{s_+}c\left(\leg_+, \epsilon_+, \Psi^{L,\varepsilon_-}_k(\theta_1, \ldots, \theta_k)\right).\]
In particular,  for any augmentation $\varepsilon_-$ of $\leg_-$ and any $\theta_1, \dots, \theta_k \in LCH^*(\leg_-, \varepsilon_-)$ with $\Psi_k^{L,\varepsilon_-}(\theta_1, \dots, \theta_k) \neq 0$, a lower bound to the length of $L$ is given by: 
\begin{equation} \label{eq:length-mono-k}
		\ell(L) \geq  \sum_{i=1}^k \ln c\left(\leg_-, \varepsilon_-,\theta_i\right) - \ln c \left(\leg_+, \varepsilon_+, \Psi_k^{L,\varepsilon_-}(\theta_1, \dots, \theta_k)\right).
\end{equation}
\end{thm}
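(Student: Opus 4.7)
The plan is to derive the capacity inequality directly from Lemma~\ref{lem:filtered-map} by passing to cocycle representatives whose filtration is tightly controlled by the capacities $v_i := c(\leg_-, \varepsilon_-, \theta_i)$, and then extract the length bound by taking logarithms.

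First, I would fix $\epsilon > 0$ small and unwind the definition of capacity. For each $i$, the vanishing $P^{v_i - \epsilon}(\theta_i) = 0$ means that any cocycle representative $\alpha'_i$ of $\theta_i$ has $p^{v_i - \epsilon}(\alpha'_i) = d^{\varepsilon_-}(\bar{\beta}_i)$ in the quotient complex $A^*_- / F^{v_i - \epsilon}A^*_-$. Because $d^{\varepsilon_-}$ preserves each subcomplex $F^{w}A^*_-$ (a direct consequence of Lemma~\ref{lem:leg-energy}: Reeb chords appearing in $d^{\varepsilon_-}(c)$ must have height at least $h(c)$), I can lift $\bar{\beta}_i$ to $\beta_i \in A^*_-$ and obtain a new cocycle representative $\alpha_i := \alpha'_i - d^{\varepsilon_-}\beta_i$ of $\theta_i$ lying in $F^{v_i - \epsilon}A^*_-$.

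Next, setting $w_i := e^{s_-}(v_i - \epsilon)$ so that $\alpha_i \in F^{w_i/e^{s_-}}A^*_-$, Lemma~\ref{lem:filtered-map} gives
\[
\psi^{L, \varepsilon_-}_k(\alpha_1, \ldots, \alpha_k) \in F^{(w_1 + \cdots + w_k)/e^{s_+}}A^*_+ = F^{e^{s_- - s_+}\left(\sum_i v_i - k \epsilon\right)}A^*_+.
\]
For $k = 1$, the map $\psi^{L, \varepsilon_-}_1$ is a chain map (the $n = 1$ case of~\eqref{eq:a-infty-map}), so this output is already a cocycle representing $\Psi^{L, \varepsilon_-}_1(\theta_1)$. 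For $k \geq 2$, the chain $\psi^{L, \varepsilon_-}_k(\alpha_1, \ldots, \alpha_k)$ is typically not a cocycle, but the higher $A_\infty$-relations together with the cocycle property of each $\alpha_i$ express $m^+_1 \circ \psi^{L, \varepsilon_-}_k(\alpha_1, \ldots, \alpha_k)$ in terms of lower-arity $\psi^{L, \varepsilon_-}_j$ evaluated on the $\alpha_i$'s. Running the homotopy transfer theorem for $A_\infty$-morphisms then produces a genuine cocycle representative of $\Psi^{L, \varepsilon_-}_k(\theta_1, \ldots, \theta_k)$; because $d^{\varepsilon_+}$ and all the $\psi^{L, \varepsilon_-}_j$ respect the filtration on $A^*_+$, this representative still lies in $F^{e^{s_- - s_+}(\sum_i v_i - k \epsilon)}A^*_+$.

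It follows that $c(\leg_+, \varepsilon_+, \Psi^{L, \varepsilon_-}_k(\theta_1, \ldots, \theta_k)) \geq e^{s_- - s_+}\bigl(\sum_i v_i - k\epsilon\bigr)$. Letting $\epsilon \to 0$ and multiplying through by $e^{s_+}$ yields the main capacity inequality of the theorem, and the length bound follows by rearranging, taking logarithms (using that $\Psi^{L, \varepsilon_-}_k(\theta_1, \ldots, \theta_k) \neq 0$), and infimizing $s_+ - s_-$ over all choices of $[s_-, s_+]$ containing the non-cylindrical portion of $L$. The main obstacle is precisely the $k \geq 2$ case above: one must verify that the homotopy transfer that promotes $\psi^{L, \varepsilon_-}_k$ to the cohomology-level operation $\Psi^{L, \varepsilon_-}_k$ can be carried out entirely within the filtered structure, so that inverting $m^+_1$ on acyclic complements does not leak out of the intended filtration level; for $k = 1$ the argument is an essentially formal consequence of Lemmas~\ref{lem:leg-energy} and~\ref{lem:filtered-map}.
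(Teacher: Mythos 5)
Your argument for $k=1$ is correct and is essentially the approach taken in the paper: both hinge on Lemma~\ref{lem:filtered-map}. The paper organizes the proof as a commutative diagram on the cohomology groups, whereas you produce an explicit cocycle representative in $F^{v_1-\epsilon}A^*_-$ and push it forward; since $\psi^{L,\varepsilon_-}_1$ is a filtered chain map, these are the same argument in different clothing.

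The obstacle you flag for $k \geq 2$ is genuine, and it applies equally to the paper's proof: the right-hand vertical arrow of the commutative diagram, and the commutativity of the square, presuppose that the homotopy transfer defining $\Psi^{L,\varepsilon_-}_k$ respects the height filtration, which is asserted (``exists by Lemma~\ref{lem:filtered-map}'') but not verified there either. The gap is closable: because $d^{\varepsilon}$ strictly increases Reeb chord height (Lemma~\ref{lem:leg-energy} with $E_\leg(u)>0$ for the nonconstant disks counted in the differential), one can choose the deformation retract $(i,p,h)$ of $A^*_\leg$ onto $LCH^*(\leg,\varepsilon)$ to be filtration-preserving; the tree formulas for $\boldsymbol{\mu}^\varepsilon$ and $\boldsymbol{\Psi}^{L,\varepsilon_-}$ then preserve the scaled filtrations, and a filtered $i$ automatically lands $\theta_l$ in $F^{w}A^*_-$ for every $w < c(\leg_-,\varepsilon_-,\theta_l)$, which is precisely the representative your argument needs. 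You should also be more careful with the final ``take logarithms'' step: the capacity inequality gives $\ell(L) \geq \ln\left(\sum_i c(\leg_-,\varepsilon_-,\theta_i)\right) - \ln c\left(\leg_+,\varepsilon_+,\Psi^{L,\varepsilon_-}_k(\theta_1,\ldots,\theta_k)\right)$, with the sum inside the logarithm; for $k \geq 2$ this is not the same as the bound with $\sum_i \ln c(\leg_-,\varepsilon_-,\theta_i)$, so ``rearranging'' does not directly yield~(\ref{eq:length-mono-k}) as written.
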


\begin{proof} 
We first consider the following commutative diagram, where the right-hand vertical arrow exists by Lemma~\ref{lem:filtered-map}:
\[
	\xymatrix@C=70pt{
		LCH^*(\leg_+,\varepsilon_+) \ar[r]^-{P^{(w_1 + \cdots + w_k)/e^{s_+}}} & LCH^*_{{(w_1 + \cdots + w_k)/e^{s_+}}}(\leg_+, \varepsilon_+) \\
		\bigotimes_{i=1}^k LCH^*(\leg_-, \varepsilon_-) \ar[u]^{\Psi^{L,\varepsilon_-}_k} \ar[r]^-{\bigotimes P^{w_i/e^{s_-}}} & \bigotimes_{i=1}^k LCH^*_{w_i / e^{s_-}  }(\leg_-, \varepsilon_-) \ar[u]_{\Psi^{L,\varepsilon_-}_k}
	}
\]

Next, we rewrite the definition of capacity to help take the cobordism action into account:
\begin{equation} \label{eq:cobord-capacity}
\begin{split}
	e^{s} c(\leg, \varepsilon, \theta) &= e^{s} \sup \{ w / e^{s} : P^{w / e^{s}}(\theta) = 0\} \\
	&= \sup \{w : P^{w / e^{s}}(\theta) = 0\}.
	\end{split}
\end{equation}

Fix $\{\theta_1, \ldots, \theta_k\} \subset LCH^*(\leg_-, \varepsilon_-)$.  The commutativity of the diagram above shows that if $P^{w_i / e^{s_-}}(\theta_i) = 0$ for each $i \in \{1, \ldots, k\}$, then we have
\[P^{(w_1 + \cdots + w_k) / e^{s_+}}\left(\Psi^{L,\varepsilon_-}_k(\theta_1, \ldots, \theta_k)\right) = 0.\]
Equation~(\ref{eq:cobord-capacity}) then implies that  
\[e^{s_+} c\left(\leg_+, \varepsilon_+,  \Psi^{L,\varepsilon_-}_k(\theta_1, \ldots, \theta_k)\right) \geq \sum w_i,\]
for any $w_i$ with the property that $P^{w_i / e^{s_-}}(\theta_i) = 0$, and the theorem follows.
\end{proof}

\section{Applications}
\label{sec:applications}

Having established constructions of Lagrangian cobordisms in 
  Section~\ref{sec:constructions} and lower bounds to the length of a Lagrangian
  cobordism in Theorem~\ref{thm:a-infty-mono},
 we proceed to examine the examples outlined in the Introduction:  the vertical  contractions of Theorem~\ref{thm:unknot-dilation} in Section~\ref{ssec:dilation}, the Hopf links of Theorem~\ref{thm:hopf} in Section~\ref{ssec:hopf}, the packing problem of Question~\ref{ques:packing} in Section~\ref{ssec:packing}, and the loops of Legendrians of Theorem~\ref{thm:loops} in Section~\ref{ssec:loops}.  

\subsection{Reeb Chord Contractions}
\label{ssec:dilation}

We begin by re-examining the vertical contraction of a Legendrian $\leg^\sigma$ from Proposition~\ref{prop:scaling} in light of Theorem~\ref{thm:mono}.
In Proposition~\ref{prop:scaling}, we saw that when $\sigma<1$, there exist Lagrangian cobordisms from
$\leg_- = \leg$ to $\leg_+ = \leg^\sigma$ of length arbitrarily close to $\ln(1/\sigma)$. We will apply Theorem~\ref{thm:mono} to  
give lower bounds on the length of such a cobordism:

\begin{prop} \label{prop:chord-diff} Suppose $\leg_-$ is a connected Legendrian submanifold with an augmentation with the property that all its Reeb chords  have length at least $u$, 
and $\leg_+$  is a connected Legendrian submanifold such that all its Reeb chords  have length at most $v$.  If $L$ is a Lagrangian cobordism
from $\leg_-$ to $\leg_+$, then
\[\ell(L) \geq \ln \frac{u}{v}.\]
\end{prop}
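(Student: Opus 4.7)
The plan is to apply Theorem~\ref{thm:mono} to the fundamental class of $\leg_-$. Given the augmentation $\varepsilon_-$ of $\leg_-$ supplied by the hypothesis, the cobordism $L$ induces an augmentation $\varepsilon_+ = \varepsilon_- \circ \phi_L$ of $\leg_+$, so both ends have well-defined linearized Legendrian Contact Cohomology and associated capacities.

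Let $\lambda_- \in LCH^n(\leg_-, \varepsilon_-)$ denote the fundamental class of $\leg_-$ as introduced in Example~\ref{ex:fund-class}. Since both $\leg_-$ and $\leg_+$ are connected, Proposition~\ref{prop:non-ker}(1) guarantees that $\Psi_1^{L,\varepsilon_-}(\lambda_-) \neq 0$, so $\lambda_-$ is eligible for the length bound~(\ref{eq:length-mono}) of Theorem~\ref{thm:mono}. This yields
\[
\ell(L) \geq \ln c(\leg_-, \varepsilon_-, \lambda_-) - \ln c\!\left(\leg_+, \varepsilon_+, \Psi_1^{L,\varepsilon_-}(\lambda_-)\right).
\]

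To convert this into the desired inequality $\ell(L) \geq \ln(u/v)$, I would invoke Lemma~\ref{lem:cap-Reeb-length}, which asserts that the capacity of any nonzero cohomology class is realized as the height of some Reeb chord of the underlying Legendrian. Applied at the negative end, this forces $c(\leg_-, \varepsilon_-, \lambda_-) \geq u$, since every Reeb chord of $\leg_-$ has height at least $u$ by hypothesis. Applied at the positive end, it forces $c(\leg_+, \varepsilon_+, \Psi_1^{L,\varepsilon_-}(\lambda_-)) \leq v$, since every Reeb chord of $\leg_+$ has height at most $v$. Combining these two estimates with the displayed inequality gives $\ell(L) \geq \ln u - \ln v = \ln(u/v)$.

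The proof is essentially a bookkeeping argument once the right class is chosen, so the only substantive step is verifying that $\Psi_1^{L,\varepsilon_-}(\lambda_-)$ is nonzero; this is precisely what Proposition~\ref{prop:non-ker}(1) provides, and it is the reason for the connectedness hypotheses on $\leg_\pm$. I do not expect any other obstacle: the capacity machinery from Section~\ref{sec:capacity} and the monotonicity of Theorem~\ref{thm:mono} do the rest of the work.
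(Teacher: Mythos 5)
Your proof matches the paper's argument essentially line by line: both apply Theorem~\ref{thm:mono} to the fundamental class $\lambda_-$, use Proposition~\ref{prop:non-ker}(1) to ensure $\Psi_1^{L,\varepsilon_-}(\lambda_-) \neq 0$, and invoke Lemma~\ref{lem:cap-Reeb-length} to bound the capacities by $u$ from below and $v$ from above. No discrepancies to flag.
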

 
\begin{proof} By Theorem~\ref{thm:mono} (i.e. Theorem~\ref{thm:a-infty-mono} with $k=1$), we know that for any $\theta \in LCH^*(\leg_-, \varepsilon_-)$ that is not in $\ker \Psi_1^{L,\varepsilon_-}$, 
\[
	\ell(L) \geq \ln c\left(\leg_-, \varepsilon_-, \theta\right) - \ln c\left(\leg_+,\varepsilon_+, \Psi_1^{L,\varepsilon_-}(\theta)\right).
\] 
 We also know, by Lemma~\ref{lem:cap-Reeb-length}, that as long as  $\Psi_1^{L,\varepsilon_-}(\theta)$ is nonzero, $$u\leq c(\leg_-,\varepsilon_-,\theta) \text{ and }
 c(\leg_+, \varepsilon_+, \Psi_1^{L,\varepsilon_-}(\theta)) \leq v.$$ 
 By Proposition~\ref{prop:non-ker}, our result follows by taking $\theta$ to be the fundamental class.
\end{proof}

Theorem~\ref{thm:unknot-dilation} follows immediately from Propositions~\ref{prop:scaling} and \ref{prop:chord-diff}.



\begin{rem}  For a connected Legendrian $\leg$ with an augmentation and with Reeb chords of lengths $h_1 \leq h_2 \leq \dots \leq h_n$,
for sufficiently small $\sigma$, namely $\sigma h_n < h_1$,  Proposition~\ref{prop:chord-diff}
will give an obstruction to arbitrarily short Lagrangian cobordisms from \leg\ to $\leg^\sigma$.  However, there is
a gap between  the upper and lower bounds to the length of a Lagrangian cobordism
given by, respectively,  Proposition~\ref{prop:scaling} and Proposition~\ref{prop:chord-diff}.   The challenge to getting better lower bounds  is to understand properties of 
 the cobordism map $\Psi_1^{L,\varepsilon_-}$ and the induced augmentation $\varepsilon_+ = 
\varepsilon_- \circ \phi_L$.  
      \end{rem}

\subsection{Lagrangians with Linked Boundaries}
\label{ssec:hopf}

For our second application, we show that  the length of a Lagrangian cobordism between two $2$-component links can differ significantly from the lengths of cobordisms between individual pairs of components.  

More specifically, let $U^n(1)$ denote a Legendrian submanifold with precisely
one Reeb chord of height $1$.  Form the Hopf link $H(\nu)$ of two components from this Legendrian
sphere and a vertical translate: 
\[H(\nu) = U^n(1) \cup (U^n(1))_\epsilon^\nu, \qquad 0 < \nu < 1,\]
where $(U^n(1))_\epsilon^\nu$ is a small horizontal dilation of the vertical translate so as to make the link chord generic.
We now consider Lagrangian cobordisms between $H(u)$ and $H(v)$, $0 < u,v < 1$. 
Recall that in Corollary~\ref{cor:vert-shift},  we showed that there always exists an
arbitrarily short Lagrangian cobordism between  $\leg_- = (U^n(1))_\epsilon^u$ and
its vertical translate $\leg_+ = (U^n(1))_\epsilon^v$, for any $u, v$.  
In particular, the vertical shifting between individual components of the Hopf links can be achieved with arbitrarily short cobordisms.  When considering cobordisms between links, however, Remark~\ref{rmk:vert-distort} hints that realizing vertical shifts in the presence of another cobordism could require a longer cobordism.  We will  apply the
the obstruction given by Theorem~\ref{thm:mono}  and the
constructions from Section~\ref{sec:constructions}  to show a higher dimensional version of Theorem~\ref{thm:hopf}:

\begin{thm}\label{thm:n-dim-hopf}
Any Lagrangian cobordism from $\leg_- = H(u)$ to $\leg_+ = H(v)$ composed of two Lagrangian cylinders that join the upper (resp. lower) component of $H(u)$ to the upper (resp. lower) component of $H(v)$ has length at least 
	\begin{itemize}
	\item $\ln \frac{1-u}{1-v}$ if $u\leq v$ and
	\item $\ln \frac{u}{v}$ if $u \geq v$,
	\end{itemize} 
	and the bounds are the best possible.
	\end{thm}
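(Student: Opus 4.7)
The plan is to establish the lower bound on $\ell(L)$ using the capacity machinery of Theorem~\ref{thm:mono}, then match it with an explicit construction from Section~\ref{sec:constructions}. Before starting, I would fix a small chord-generic horizontal perturbation of the top component of $H(v)$ so that the Legendrian Contact Homology DGA is well-defined. The Reeb chords split into two self-Reeb chords (one on each unknot component, each of height $1$) and mixed Reeb chords between the two components whose heights depend on $v$. I would verify that for a suitable perturbation the relevant mixed Reeb chord heights are $v$ and $1-v$. The trivial augmentation on each unknot component (Example~\ref{ex:unknot-computation}) then extends to an augmentation $\varepsilon$ of the Hopf link DGA by sending the mixed chords to zero, which is valid when their gradings are non-zero.

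The crucial observation for the lower bound is that the cobordism $L = L_1 \sqcup L_2$ is a disjoint union of two cylindrical cobordisms, each on a single component. Consequently, for each $i$, the pair $(\bar L_i, (\leg_-)_i)$ is acyclic since $\bar L_i$ is a cylinder on $U^n(1)$, and hence $(\bar L, \leg_-)$ is acyclic. By Proposition~\ref{prop:non-ker}(2), the induced map $\Psi_1^{L,\varepsilon_-}$ is an isomorphism on linearized LCH. To obtain the bound $\ln \frac{1-u}{1-v}$ when $u \leq v$, I would apply Theorem~\ref{thm:mono} to the class $\theta \in LCH^*(H(u), \varepsilon_-)$ dual to the mixed chord of height $1-u$: its image under $\Psi_1^{L,\varepsilon_-}$ is non-zero, and its capacity at the top is at most the height $1-v$ of the corresponding chord in $H(v)$. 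A symmetric argument applied to the class dual to the mixed chord of height $u$ yields the bound $\ln \frac{u}{v}$ when $u \geq v$.

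For sharpness, the plan is to construct Lagrangian cobordisms of length arbitrarily close to the bounds. Starting from $H(u)$, I would write down a Legendrian isotopy to $H(v)$ that combines a vertical translation of the top component (as in Corollary~\ref{cor:vert-shift}) with, when needed, a contraction of the vertical separation between the components. Applying Lemma~\ref{lem:isotopy-cobord} to such an isotopy and checking the embedding criterion of Lemma~\ref{lem:reeb-cobord} in the style of the proof of Proposition~\ref{prop:scaling} should produce cobordisms whose length approaches $\ln \frac{u}{v}$ (when $u \geq v$) or $\ln \frac{1-u}{1-v}$ (when $u \leq v$). The point is that the short and long mixed chord heights impose the governing constraints on $e^s \rho(s)$ being monotonic, just as a single Reeb chord did in Proposition~\ref{prop:scaling}.

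The principal obstacle is pinning down the precise Reeb chord structure of the chord-generic $H(v)$: concretely, justifying that the two mixed chord heights are exactly $v$ and $1-v$ and that the distinguished LCH classes used have the stated capacities. A secondary technical issue is to track $\Psi_1^{L,\varepsilon_-}$ at the chain level carefully enough to conclude that it identifies the distinguished chord classes rather than merely being an abstract isomorphism; this is what bounds the capacity of the image from above by the height of the corresponding chord in $H(v)$. If the mixed chord heights turn out to be $1 \pm v$ rather than $v$ and $1-v$, the bound $\ln \frac{u}{v}$ would instead require an application of the $A_\infty$ generalization Theorem~\ref{thm:a-infty-mono} combining multiple classes so that their capacities accumulate to $\ln u$ at the bottom and $\ln v$ at the top.
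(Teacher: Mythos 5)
Your overall strategy matches the paper's: compute capacities of $H(u)$ via distinguished cohomology classes, establish that the cobordism map sends them to the expected classes in $H(v)$, invoke Theorem~\ref{thm:mono} for the lower bound, and then match it with an isotopy-induced cobordism. The gap you flag as a ``secondary technical issue'' --- pinning down where $\Psi_1^{L,\varepsilon_-}$ sends the distinguished classes, rather than just knowing it is an isomorphism --- is precisely the nontrivial step, and the paper closes it with Mishachev's homotopy splitting. Because the two cylinders are disjoint, the cobordism map preserves the splitting of $A_{H(\cdot)}$ into four subcomplexes indexed by ordered pairs of components; the $\delta$-subcomplex is one-dimensional and the $\gamma$-subcomplex contributes a one-dimensional cohomology, so the isomorphism from Proposition~\ref{prop:non-ker}(2) (your acyclicity argument is correct here) restricts to the obvious identification $[\gamma_u]\mapsto[\gamma_v]$, $[\delta_{1-u}]\mapsto[\delta_{1-v}]$. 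Without the splitting, ``$\Psi_1$ is an isomorphism'' does not by itself bound $c(\leg_+,\varepsilon_+,\Psi_1[\gamma_u])$ above by $v$.

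On the Reeb chord count: the chord-generic $H(v)$ has six chords, not four --- two self-chords of height $1$, two mixed chords of height $v$, one of height $1-v$, and one of height $1+v$. Your hedge that the heights might be $1\pm v$ is thus half-right, but both $v$ and $1-v$ do appear and they are exactly the ones carrying the distinguished classes $[\gamma_v]$, $[\delta_{1-v}]$, so the capacity computation goes through as you sketch (with the aside that $[\gamma]$ being a nonzero class requires a small argument since its Mishachev subcomplex has three generators). The $1+\rho(s)$ chord does matter for the sharpness construction: you must verify the embedding condition of Lemma~\ref{lem:reeb-cobord} for \emph{all six} chords, including this one; the paper checks that it imposes the constraint $A > \ln\frac{u+1}{v+1}$ when $u>v$, which is weaker than $A>\ln\frac uv$ and hence introduces no new restriction. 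Omitting this check would leave the upper-bound argument incomplete. With Mishachev's splitting supplied and the full chord list tracked, your plan is the paper's proof.
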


\begin{proof} We begin by proving the claimed lower bound on the length of a cobordism $L$ from $\leg_- = H(u)$ to $\leg_+ = H(v)$.  To do so, we find nontrivial classes in the Legendrian Contact Cohomology of $H(u)$ and compute their associated capacities.  

 We may assume that 
 $H(u)$ has exactly six Reeb chords, one between each pair of strands at the midpoint of the (spun) front diagram in Figure~\ref{fig:hopf-chords}. As discussed in Example~\ref{ex:unknot-computation}, each component of $H(u)$ has a (trivial) augmentation, and hence $H(u)$ itself does; call it $\varepsilon_u$.  With respect to this trivial augmentation, we use Mishachev's homotopy splitting \cite{kirill} to decompose the vector space $A_{H(u)}$ into four subcomplexes, one for each ordered pair of components; a Reeb chord is a generator of the subcomplex corresponding to its lower and upper ends. The chord $\delta_{1-u}$ depicted in Figure~\ref{fig:hopf-chords} is the sole generator of its subcomplex, and hence yields a cohomology class $[\delta_{1-u}]$.  The chord $\gamma_u$, on the other hand, is one of three chords generating its subcomplex, but the argument in \cite[\S6]{bst:construct}\footnote{While the calculation in \cite[\S6]{bst:construct} is ostensibly for generating family homology, the same argument works for Legendrian contact homology as only length and grading of Reeb chords are used.} shows that it must yield a cohomology class $[\gamma_u]$.
It follows immediately from Lemma~\ref{lem:cap-Reeb-length} that the capacities of these classes are:
\begin{equation} \label{eq:hopf-cap}
	\begin{split}
	c(H(u), \varepsilon_u, [\gamma_u]) &= u,  \\
	c(H(u), \varepsilon_u, [\delta_{1-u}]) &= 1-u.
\end{split}
\end{equation}

\begin{figure}
\labellist
\small
\pinlabel $\gamma$ [l] at 44 20
\pinlabel $\delta$ [l] at 44 45
\endlabellist

\centerline{\includegraphics{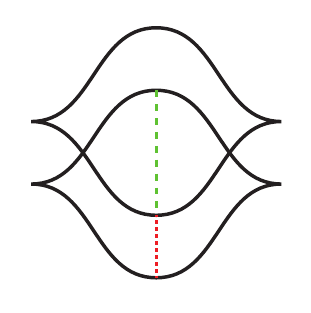}}
\caption{The chords $\gamma_u$ and $\delta_{1-u}$ each yield a non-zero class in $LCH^*(H(u), \varepsilon)$.}
\label{fig:hopf-chords}
\end{figure}

Suppose $L$ is a Lagrangian cobordism from $\leg_- = H(u)$ to $\leg_+ = H(v)$.  To apply Theorem~\ref{thm:mono},  we begin by asserting that it is clear that the cobordism 
map $\Psi_1^{L, \varepsilon_u}$ preserves Mishachev's homotopy splitting.  Combining this fact with Proposition~\ref{prop:non-ker}, we see that $\Psi_1^{L, \varepsilon_u}([\gamma_u]) = [\gamma_v]$ and that $\Psi_1^{L, \varepsilon_u}([\delta_{1-u}]) = [\delta_{1-v}]$.   Theorem~\ref{thm:mono} then gives the stated lower bounds:
\[\ell(L) \geq \max\left\{ \ln \frac{1-u}{1-v}, \ln \frac{u}{v}\right\} =
\begin{cases}
\ln \frac{1-u}{1-v}, & u < v\\
\ln \frac{u}{v}, & u > v.
\end{cases}
\]

To see that these bounds are the best possible, we apply Lemmas~\ref{lem:isotopy-cobord} and \ref{lem:reeb-cobord}.  Namely,
consider a smooth function $\rho(s)$ so that $\rho(s) = u$ for $s \leq 0$ and $\rho(s) = v$ for $s \geq A$. Parameterize $\leg$ by
$(x(t), y(t), z(t))$ for $t \in \Sigma$.  Let $\Sigma_1, \Sigma_2$ be disjoint  copies of $\Sigma$ and consider the Legendrian embedding 
 $\lambda_s(\Sigma_1 \cup \Sigma_2)$ given by
$$\lambda_s(t) = \begin{cases}
((1+\epsilon)x(t), y(t)/(1 + \epsilon), z(t) + \rho(s)), & t \in \Sigma_2 \\
(x(t), y(t), z(t)), & t \in \Sigma_1
\end{cases}. $$ 
It follows that 
$$\eta(s, t) - \eta(s, t') = 
\begin{cases}
-\rho'(s), &t \in \Sigma_1, t' \in \Sigma_2 \\
\rho'(s), & t \in \Sigma_2, t' \in \Sigma_1 \\
0, & t, t' \in \Sigma_i.
\end{cases}
$$

The Hopf link $\leg_s = \image \lambda_s$ has six Reeb chords.  By Lemma~\ref{lem:reeb-cobord}, it suffices to choose $\rho(s)$ so that
for each of these six Reeb chords, the height of the Reeb chord from $\lambda_s(t)$ to $\lambda_s(t')$
 does not equal  $\eta(s, t) - \eta(s, t')$.

Two Reeb chords  begin and end on the same component and have height $1$.  These Reeb chords give no obstruction to embedding since $1 \neq 0 = \eta(s, t) - \eta(s, t')$. 
 
There are two Reeb Chords of height $\rho(s)$ that come from $t \in \Sigma_1, t' \in \Sigma_2$.  
For these Reeb chords, the embedding condition is  $\rho(s) \neq - \rho'(s)$, or equivalently $\frac{d}{ds} (e^s \rho(s)) \neq 0$.  Thus, we are required to choose $\rho(s)$ so that $e^s \rho(s)$ is a strictly increasing function and so that
\begin{equation} \label{eq:hopf-condition}
	u = e^0 \rho(0) < e^A \rho(A) = e^A v.
\end{equation}
When $u < v$, we may find a $\rho(s)$ that satisfies Equation~\ref{eq:hopf-condition} for any $A>0$.  When $u > v$,  we may find a $\rho(s)$ that satisfies Equation~\ref{eq:hopf-condition} so long as $A > \ln{u}/{v}$.  

There will be one Reeb chord of height $1-\rho(s)$ coming from $t \in \Sigma_2, t' \in \Sigma_1$.  For this Reeb chord,
the embedding condition is $1 - \rho(s) \neq \rho'(s)$, or equivalently $\frac{d}{ds} (e^s (\rho(s) - 1  )) \neq 0$.  Since $\rho(s) - 1 < 0$, we now want to choose $\rho(s)$ so that
$e^s(\rho(s) - 1)$ is a strictly decreasing function.  If $u < v$,  finding an appropriate $\rho$ is possible as long as $A > \ln \frac{1-u}{1-v}$.   If $u > v$, we may find an appropriate $\rho(s)$ for any $A>0$.

The one remaining  Reeb chord has height $1 + \rho(s)$ and comes from $t \in \Sigma_1, t' \in \Sigma_2$. For this Reeb chord,
the embedding condition is: $1 + \rho(s) \neq -\rho'(s)$, or equivalently $\frac{d}{ds} (e^s (\rho(s) + 1  )) \neq 0$.   If $u < v$, $e^s(\rho(s) + 1)$ is
guaranteed to be a strictly increasing function   by
choosing $\rho(s)$ to be an increasing function.  When $u > v$, embeddedness can be guaranteed as long as
$(e^s \rho(s) + 1)|_{s = 0} < (e^s \rho(s)+1)|_{s = A}$, equivalently $A > \ln\frac{u+1}{v+1}$.  However, this is not a new restriction: we already know
that when $u > v$, we need $A > \ln \frac{u}{v}$, and since $\ln\frac{u}{v} > \ln \frac{u+1}{v+1}$, this Reeb chord introduces no new restrictions. 
\end{proof}

\subsection{Packing Lagrangian Cylinders}
\label{ssec:packing}

In the previous section, we examined the minimum length of a cobordism between two different Hopf links.  Now we turn this question
around and ask: if a Lagrangian cobordism has length $1$, which  links of unknots can appear as the boundaries? More precisely, we form a $(k+1)$-copy
of the Legendrian submanifold $U^n(1)$, where each copy is vertically translated by $v_i$ and then slightly dilated horizontally so that the resulting link,
$H(v_1, \ldots, v_k)$ is chord generic:
$$H(v_1, \ldots, v_k) = U^n(1) \cup (U^n(1))_{\epsilon_1}^{v_1} \cup \dots \cup  (U^n(1))_{\epsilon_k}^{v_k} \qquad 0 < v_1 < \dots < v_k  < 1,$$ 
When $v_i = \frac{i}{k+1}$, for each $i \in \{1, \ldots, k\}$, we call $H\left(\frac{1}{k+1}, \ldots, \frac{k}{k+1}\right)$ the \dfn{evenly shifted $(k+1)$-copy} of $U^n(1)$.

We return to the question asked in Question~\ref{ques:packing}:  fix the evenly shifted $(k+1)$-copy  as 
the negative boundary condition for the Lagrangian cobordism.  Which  
shifted $(k+1)$-copies $H(v_1, \dots, v_n)$  can appear as the  positive boundary 
of a Lagrangian cobordism of length one? See Figure~\ref{fig:packing}.

For the $2$-copy ($k=1$), the question reduces to:  for which $v$ does there exist a Lagrangian cobordism $L$ from $\leg_- = H(1/2)$ to $\leg_+ = H(v)$ with $\ell(L) \leq 1$?  This question was essentially answered in the previous section.   Theorem~\ref{thm:n-dim-hopf}  shows that such a cobordism exists if and only if
\[\frac{1}{2e} \leq v \leq 1 - \frac{1}{2e}.\]

Using similar techniques, we may answer the  question for arbitrary $k$:

\begin{thm} \label{thm:packing}  If there exists a Lagrangian cobordism  of length $1$ from the evenly spaced $(k+1)$-copy $H$ to
the shifted $(k+1)$-copy $H(v_1, \ldots, v_n)$ composed of $(k+1)$ Lagrangian cylinders joining corresponding components, then
	\[\frac{i-j}{(k+1)e} \leq v_i-v_j \leq 1- \frac{(k+1) - (i-j)}{(k+1)e},\]
	where $i\in \{1, \ldots, k\}$, $j \in \{0, \ldots, k-1\}$, $i>j$, and $v_0=0$.  These bounds are the best possible.
\end{thm}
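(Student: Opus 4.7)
My plan is to reduce the proof to a pairwise application of the Hopf-link argument of Theorem~\ref{thm:n-dim-hopf}. For each ordered pair of components $(j,i)$ with $0\le j<i\le k$ (using the convention $v_0=0$), Mishachev's splitting \cite{kirill} decomposes the DGA of $H$ (and of $H(v_1,\ldots,v_k)$) into subcomplexes indexed by such pairs, and the mixed Reeb chords $\gamma^{ji}$ and $\delta^{ij}$ between components $j$ and $i$ yield nonzero linearized cohomology classes with respect to the trivial augmentation: $\gamma^{ji}$ is the unique generator of its subcomplex, and $\delta^{ij}$ survives the differential by the argument of \cite[\S6]{bst:construct}. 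By Lemma~\ref{lem:cap-Reeb-length}, the associated capacities are $(i-j)/(k+1)$ and $1-(i-j)/(k+1)$ at the negative end, and $v_i-v_j$ and $1-(v_i-v_j)$ at the positive end.

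Next, given a length-$1$ cobordism $L$ built from $k+1$ disjoint Lagrangian cylinders, the cobordism map preserves Mishachev's splitting since each subcomplex is determined by the component labels at the chord endpoints, and the restriction of $L$ to the pair of cylinders over components $j$ and $i$ is itself a Lagrangian cobordism between Hopf links. Applying Proposition~\ref{prop:non-ker} to that sub-cobordism (exactly as in the proof of Theorem~\ref{thm:n-dim-hopf}) shows that $\Psi_1^{L,\varepsilon_-}$ sends $[\gamma^{ji}]_-$ and $[\delta^{ij}]_-$ to the corresponding nonzero classes at the positive end. Theorem~\ref{thm:mono} with $e^{s_+-s_-}\le e$ then gives
\begin{equation*}
\tfrac{i-j}{k+1}\le e\,(v_i-v_j) \quad\text{and}\quad 1-\tfrac{i-j}{k+1}\le e\bigl(1-(v_i-v_j)\bigr),
\end{equation*}
which rearrange to the two stated inequalities.

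For sharpness, I would adapt the construction at the end of the proof of Theorem~\ref{thm:n-dim-hopf}. Choose smooth functions $\rho_0\equiv 0,\rho_1,\ldots,\rho_k$ with $\rho_i(s)=\tfrac{i}{k+1}$ for $s\le 0$ and $\rho_i(s)=v_i$ for $s\ge A$, and use them together with small horizontal dilations (to preserve chord-genericity) to build a vertical-shift isotopy $\lambda_s$ of the $(k+1)$-copy. For each pair $(j,i)$ with $i>j$, the Reeb chords have heights $1$ (self-chords; no constraint, since $\eta$ is constant on each component), $\rho_i-\rho_j$, $1-(\rho_i-\rho_j)$, and $1+(\rho_i-\rho_j)$. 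Lemma~\ref{lem:reeb-cobord} then reduces embeddedness to requiring that both $e^s(\rho_i(s)-\rho_j(s))$ and $e^s(\rho_i(s)-\rho_j(s)-1)$ be strictly monotone for every $i>j$ (the $1+(\rho_i-\rho_j)$ chord imposes a strictly weaker constraint than the first). Matching boundary values gives precisely the strict versions of the stated inequalities with $e$ replaced by $e^A$; choosing $A$ arbitrarily close to $1$ from above shows that the bound is sharp.

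The main obstacle is assembling the functions $\rho_1,\ldots,\rho_k$ simultaneously: the monotonicity conditions cut out, at each $s$, an open region of admissible velocity vectors $(\rho_1'(s),\ldots,\rho_k'(s))\in\mathbb{R}^k$, and one must verify that this region is non-empty along an interpolating path connecting the evenly shifted tuple at $s=0$ to $(v_1,\ldots,v_k)$ at $s=A$ whenever the strict inequalities hold. This reduces to a convexity argument over the $\binom{k+1}{2}$ pairs, after which a partition-of-unity smoothing completes the construction; the weak-inequality boundary case follows by a limiting argument in $A$.
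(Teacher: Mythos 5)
Your proposal is correct and follows essentially the same route as the paper: Mishachev's splitting to isolate the pairwise subcomplexes, the same classes $[\gamma_{ij}]$ and $[\delta_{ij}]$ with the same capacity computations, Proposition~\ref{prop:non-ker} combined with preservation of the splitting to show that $\Psi_1^{L,\varepsilon_-}$ carries these classes to the corresponding nonzero classes at the positive end, and then Theorem~\ref{thm:mono} to deduce the inequalities. The sharpness construction is also the same in spirit --- an explicit vertical-shift isotopy controlled by Lemma~\ref{lem:reeb-cobord}.

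The one place your proposal is more explicit than the paper --- and where you are right to pause --- is the simultaneity of the constraints on the $\rho_i$. The paper dismisses this with ``entirely similar to Theorem~\ref{thm:hopf},'' which hides the fact that you now have $\binom{k+1}{2}$ simultaneous pairwise constraints rather than one. Your instinct to frame this as a nonemptiness-of-admissible-velocities problem is sound, and it does close cleanly: set $\mu_i(s) = e^s\rho_i(s)$ and observe that the conditions from Lemma~\ref{lem:reeb-cobord} for all pairs $i>j\ge 0$ (with $\mu_0\equiv 0$) reduce exactly to the chain $0<\mu_1'(s)<\mu_2'(s)<\cdots<\mu_k'(s)<e^s$ for all $s$, because the constraint $\mu_i'-\mu_j'<e^s$ for all pairs is implied by the extremal case $\mu_k'<e^s$ once the derivatives are strictly ordered. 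The admissible velocity region at each $s$ is therefore the open simplex $e^s\cdot\{0<x_1<\cdots<x_k<1\}$, which is manifestly nonempty and convex, and the explicit selection $\mu_i'(s) = \tfrac{e^s}{e^A-1}\bigl(e^A v_i - \tfrac{i}{k+1}\bigr)$ matches the required boundary values precisely when the strict versions of the stated inequalities hold (indeed the constraints $0<I_1<\cdots<I_k<e^A-1$ on $I_i = e^A v_i - \tfrac{i}{k+1}$ are equivalent to the full set of pairwise inequalities). So no delicate partition-of-unity or general convexity machinery is needed; the ``convexity argument over the pairs'' you anticipated collapses to one explicit choice. This is a useful supplement to what the paper writes, but it does not change the overall approach.
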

In the case of the $3$-copy, these inequalities are visualized in Figure~\ref{fig:packing-ineq}.

\begin{proof} 
	The proof relies on  rewriting the last inequality in Theorem~\ref{thm:mono} in the case of a cobordism of length at most  $1$:
	\begin{equation} \label{eq:length-1-mono}
		c\left(\leg_+,\varepsilon_+,\Psi_1^{L,\varepsilon_-}(\theta)\right) \geq \frac{1}{e} c\left(\leg_-,\varepsilon_-, \theta\right).
	\end{equation}
	
	Fix the trivial augmentation $\varepsilon_-$ on $H$.  For each pair of components of the $(k+1)$-copy, we will apply Equation~(\ref{eq:length-1-mono}) to cohomology classes corresponding to $\delta$ and $\gamma$ in the proof of Theorem~\ref{thm:n-dim-hopf}.  As in the proof of Theorem~\ref{thm:n-dim-hopf}, 
	 Mishachev's homotopy splitting implies that  the computation of $LCH^*(H, \varepsilon_-)$ and $LCH^*(H(v_1, \ldots, v_n), \varepsilon_+)$ splits into computations entirely similar to that for $LCH^*(H(u),\varepsilon)$. \  In particular, we obtain two non-trivial classes $[\gamma^\pm_{ij}]$ and $[\delta^\pm_{ij}]$ for every pair of components $i>j$.
	
As before, the capacity of each generator is easy to compute from the length of the associated Reeb chord:
\begin{align*}
		c(H, \varepsilon_-, [\gamma^-_{ij}]) &= \frac{i-j}{k+1}, & c(H(v_1, \ldots, v_n), \varepsilon_+, [\gamma^+_{ij}]) &= v_i-v_j \\
		c(H, \varepsilon_-, [\delta^-_{ij}]) &= 1-\frac{i-j}{k+1}, & c(H(v_1, \ldots, v_n), \varepsilon_+, [\delta^+_{kl}]) &= 1-(v_i-v_j).
\end{align*}
Proposition~\ref{prop:non-ker} and the fact that $\Psi^{L, \varepsilon_-}_1$ preserves the homotopy splitting together imply that $\Psi^{L, \varepsilon_-}_1([\gamma^-_{ij}]) = [\gamma^+_{ij}]$ and $\Psi^{L, \varepsilon_-}_1([\delta^-_{ij}]) = [\delta^+_{ij}]$. The bounds in the theorem now follow from applying (\ref{eq:length-1-mono}) to the capacities computed above.
	
	The construction to show that the bounds in the theorem are the best possible is entirely similar to that in
	Theorem~\ref{thm:hopf}, where once again the restrictions on the auxiliary function $\rho(s)$ precisely match the capacity inequalities.    
	\end{proof}

\subsection{Non-Trivial Loops of Isotopies}
\label{ssec:loops}

Non-trivial loops of Legendrian isotopies yield another interesting set of Lagrangian cobordisms.  To clarify the meaning of a non-trivial loop of isotopies,  if $\mathcal{L}(M,\xi)$ is the space of Legendrian submanifolds of $(M,\xi)$, suitably topologized, then we  consider non-trivial elements of $\pi_1(\mathcal{L}(M,\xi), \leg)$,  especially those that are contractible in the space of smooth submanifolds.  The first such example, due to K\'alm\'an \cite{kalman:mono1}, is the loop of Legendrian trefoils in the standard contact $\rr^3$ pictured in Figure~\ref{fig:trefoil-loop}.  We will also consider a higher-dimensional example due to the first author and Sullivan \cite{ss:pi-k}.

\begin{figure} 
\begin{center}
\includegraphics[width=5in]{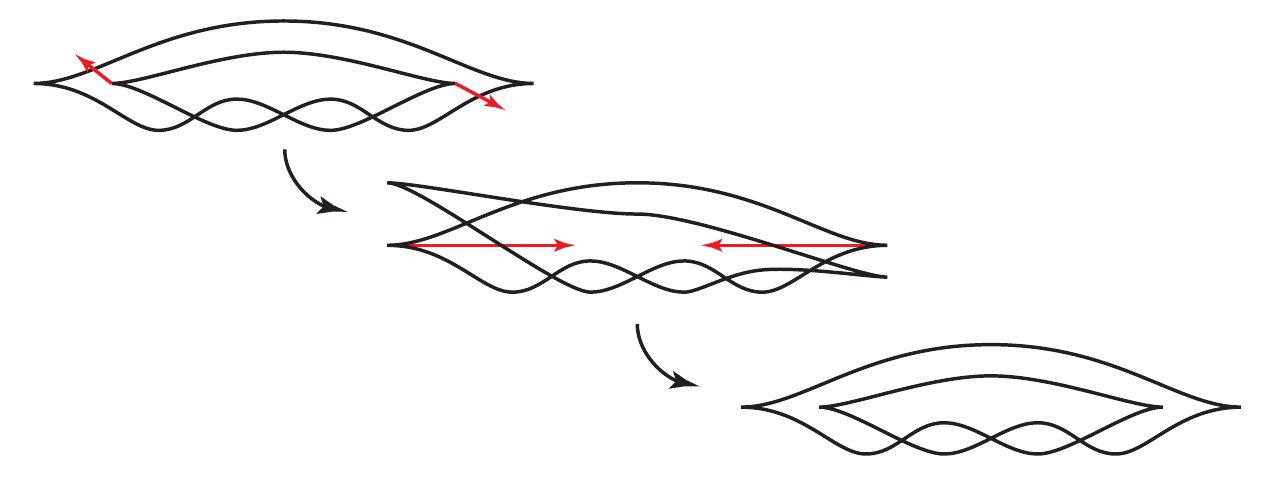} 
\caption{A non-trivial loop of Legendrian trefoils.}
\label{fig:trefoil-loop}
\end{center}
\end{figure}

First, consider the Lagrangian projection of the Legendrian trefoil shown in Figure~\ref{fig:trefoil-lagr}. It is straightforward to compute using the inequalities in \cite[Example 2.7]{kalman:mono1} that if the heights of the Reeb chords  satisfy the following constraints, then we obtain  a valid Lagrangian diagram:
	\begin{equation}
	\begin{split}	
		h(a_1) &> h(b_1)+h(b_2)+h(b_3), \\
		h(a_2) + h(b_1) + h(b_3) &> 2 h (a_1).
\end{split}
\end{equation}
Let us  denote $h(b_1) = h(b_3)$ by $h_1$ and $h(b_2)$ by  $h_2$.

\begin{prop}  \label{prop:kalman}   Let $L$ denote a Lagrangian cobordism induced by K\'alm\'an's non-trivial isotopy.  For any $L' \in [L]$, the length of $L'$ is bounded below by $|\ln h_1 - \ln h_2|$.
\end{prop}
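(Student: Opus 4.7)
The plan is to apply Theorem~\ref{thm:mono} to the cobordism $L'$, exploiting the fact that both ends equal the trefoil $\leg$ (since $L$ arises from a loop of Legendrian isotopies) while the induced cobordism map on linearized cohomology is nontrivial. Because compactly supported Lagrangian isotopies induce chain-homotopic DGA maps, the map on cohomology $\Psi_1^{L', \varepsilon_-}$ is an invariant of the class $[L]$, and we may carry out the computation with $L$ itself.

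First, fix a combinatorial augmentation $\varepsilon$ of the Chekanov--Eliashberg DGA of $\leg$ chosen so that the Reeb chords $b_1$ and $b_2$ represent nonzero classes in $LCH^{*}(\leg,\varepsilon)$; the heights of these chords and the inequalities on $h(a_i)$ constrain the differential enough that, after perhaps relabeling among $\{b_1, b_2, b_3\}$, such an augmentation can be produced directly from Chekanov's combinatorial model. By Lemma~\ref{lem:cap-Reeb-length} and the length hierarchy of the chords, one verifies
\begin{equation*}
c(\leg, \varepsilon, [b_1]) = h_1, \qquad c(\leg, \varepsilon, [b_2]) = h_2.
\end{equation*}

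Second, I would use K\'alm\'an's monodromy computation in \cite{kalman:mono1} to pin down the action of $\Psi_1^{L,\varepsilon}$ on these two classes. K\'alm\'an's analysis, translated to the cobordism map via Remark~\ref{rem:cob-direct-defn} and the isotopy construction in Lemma~\ref{lem:isotopy-cobord}, identifies the monodromy as a nontrivial permutation of linearized classes that exchanges $[b_1]$ and $[b_2]$ (up to classes of strictly smaller capacity that do not affect the inequality). Granting this, Theorem~\ref{thm:mono} applied with $\theta = [b_1]$ gives
\begin{equation*}
\ell(L') \geq \ln c(\leg, \varepsilon, [b_1]) - \ln c(\leg, \varepsilon, \Psi_1^{L,\varepsilon}([b_1])) = \ln h_1 - \ln h_2,
\end{equation*}
while the same argument with $\theta = [b_2]$ produces the reverse inequality $\ell(L') \geq \ln h_2 - \ln h_1$. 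Together these yield $\ell(L') \geq |\ln h_1 - \ln h_2|$.

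The main obstacle is the second step: verifying, at the cohomological (rather than chain) level, that the monodromy map indeed interchanges the capacities of $[b_1]$ and $[b_2]$. This requires translating K\'alm\'an's action, originally stated for the Chekanov DGA monodromy, into a statement about the filtered cobordism map $\Psi_1^{L,\varepsilon}$, and checking that lower-filtration corrections do not spoil the permutation. A subsidiary issue is selecting the augmentation $\varepsilon$ so that both $[b_1]$ and $[b_2]$ survive to nonzero classes; this is a direct but case-sensitive computation using the height inequalities relating $h(a_1), h(a_2), h_1, h_2$.
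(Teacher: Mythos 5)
Your overall strategy matches the paper's: apply Theorem~\ref{thm:mono} to capacity-carrying classes in linearized contact cohomology and use K\'alm\'an's explicit monodromy computation to identify $\Psi_1^{L,\varepsilon_-}$. However, the details you propose have concrete errors that the paper's proof reveals.

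First, the assumption that you may ``fix a combinatorial augmentation $\varepsilon$'' that serves at both ends is false. Although both ends of $L$ are the same trefoil $\leg$, the cobordism map does \emph{not} preserve the chosen augmentation: the correct negative-end augmentation in the paper sends $b_1 \mapsto 1$ and all other generators to $0$, and the induced positive-end augmentation $\varepsilon_+ = \varepsilon_- \circ \phi_L$ sends \emph{both} $b_1$ and $b_2$ to $1$. These are genuinely different augmentations, yielding different linearized complexes $LCH^*(\leg,\varepsilon_-) = \langle [a_2], [b_2], [b_1+b_3]\rangle$ and $LCH^*(\leg,\varepsilon_+) = \langle [a_1]=[a_2], [b_2], [b_3]\rangle$. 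Your plan as stated would require the monodromy to act as an automorphism of a single $LCH^*(\leg,\varepsilon)$; that is not what happens here.

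Second, $[b_1]$ is not a cohomology class in $LCH^*(\leg,\varepsilon_-)$. With $\varepsilon_-(b_1)=1$, the codifferential satisfies $d^{\varepsilon_-}(b_1)=a_1=d^{\varepsilon_-}(b_3)$, so only the sum $[b_1+b_3]$ (which also has capacity $h_1$ by Lemma~\ref{lem:cap-Reeb-length}, since $h(b_1)=h(b_3)=h_1$) represents a class. Third, the monodromy does not exchange $[b_1]$ and $[b_2]$: linearizing K\'alm\'an's chain-level map $\phi_L(b_1)=1+b_2b_3$, $\phi_L(b_2)=b_1$, $\phi_L(b_3)=b_2$ gives $\psi_1^{L,\varepsilon_-}(b_1)=b_2$, $\psi_1^{L,\varepsilon_-}(b_2)=b_3$, $\psi_1^{L,\varepsilon_-}(b_3)=0$, so $\Psi_1^{L,\varepsilon_-}$ sends $[b_1+b_3]\mapsto [b_2]$ and $[b_2]\mapsto [b_3]$. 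The second displayed inequality should therefore come from $[b_2]\mapsto [b_3]$ (using $h(b_3)=h_1$), not from a hypothesized $[b_2]\mapsto[b_1]$. You correctly flagged the ``translation to the cohomological level'' as the main obstacle; the resolution is precisely the augmentation bookkeeping and cycle identification above, not a cleaner permutation.
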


\begin{proof} 
	A standard computation, as in \cite[Example 2.7]{kalman:mono1}, shows that the generators $a_1, a_2$ of the Legendrian contact homology algebra have degree $1$ and that the others have degree $0$.  Further, following \cite[Example 2.14]{kalman:mono1}, we may compute the differential to be:
	\begin{align*}
		\df a_1 &= 1+b_1+b_3+b_1b_2b_3 \\
		\df a_2 &= b_2+b_2b_3+b_1b_2+b_2b_3b_1b_2\\
		\df b_{1}&= \df b_{2} = \df b_{3} = 0.
	\end{align*}
	
	\begin{figure}
		\labellist
		\small
		\pinlabel $a_2$ [b] at 40 74
		\pinlabel $a_1$ [b] at 112 74
		\pinlabel $b_3$ [l] at 203 98
		\pinlabel $b_2$ [l] at 203 69
		\pinlabel $b_1$ [l] at 203 40 
		\endlabellist
	\centerline{\includegraphics{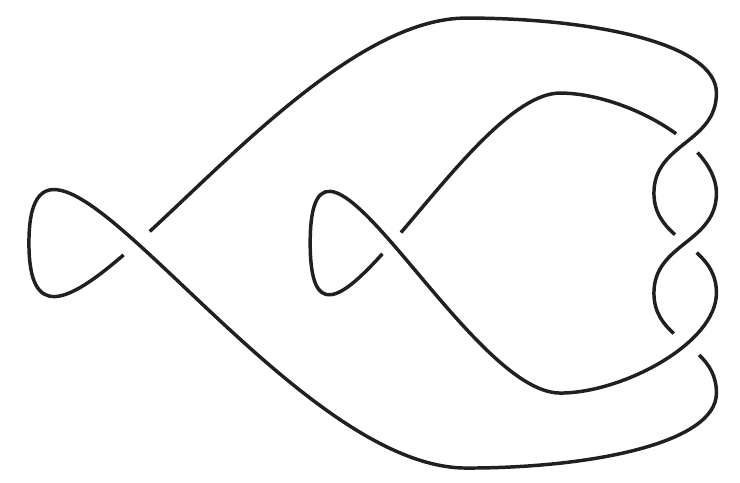}}
	\caption{The Lagrangian projection of the trefoil that K\'alm\'an uses as the base point for his non-trivial loop of Legendrians.}
	\label{fig:trefoil-lagr}
	\end{figure}

	Fixing the augmentations $\varepsilon_-$ that sends $b_1$ to $1$ and all other generators to $0$ and $\varepsilon_+$ that sends both $b_1$ and $b_2$ to $1$ and all other generators to $0$, we may compute that 
	\begin{align*}
		LCH^*(\leg, \varepsilon_-) &= \langle [a_2], [b_2], [b_1+b_3] \rangle \\
		LCH^*(\leg, \varepsilon_+) &= \langle [a_1] = [a_2], [b_2], [b_3] \rangle 	
	\end{align*}
Since we  computed linearized \emph{co}homology above,  we had to change coordinates, linearize the differential $\df^\varepsilon$, and then take adjoints to obtain $d^\varepsilon$.
		
	Using the fact that Chekanov's original description of the maps between Legendrian contact homology algebras induced by Reidemeister moves coincides with the chain-level cobordism map $\phi_L$ when $L$ is induced by the corresponding Reidemeister move \cite{ehk:leg-knot-lagr-cob}, we may say that in \cite[Example 3.6]{kalman:mono1}, K\'alm\'an computes the cobordism map for the Lagrangian induced by his loop of trefoils on the $b_*$ generators to be:
	\begin{align*}
		\phi_L(b_1) &= 1 + b_2 b_3 \\
		\phi_L(b_2) &= b_1 \\
		\phi_L(b_3) &= b_2
	\end{align*}
	We have made the obvious identification of the generators at the top and bottom of the cobordism.  We see immediately that $\varepsilon_+ = \varepsilon_- \circ \phi_L$.  
	
	Finally, we compute the linearized cobordism map on the $b_*$ generators to be:
	\begin{align*}
		\psi^{L, \varepsilon_-}_1(b_1) &= b_2 \\
		\psi^{L, \varepsilon_-}_1(b_2) &= b_3 \\
		\psi^{L, \varepsilon_-}_1(b_3) &= 0.
	\end{align*}
	On cohomology, we see that $\Psi^{L, \varepsilon_-}_1$ sends $[b_1+b_3]$ to $[b_2]$ and $[b_2]$ to $[b_3]$. Applying Theorem~\ref{thm:mono} to this cobordism map yields the inequalities 
	\begin{align*}
		\ell(L) &\geq \ln c(\leg, \varepsilon_-, [b_1+b_3]) -\ln c(\leg,\varepsilon_+, [b_2])) = \ln h_1 - \ln h_2, \\
		\ell(L) &\geq \ln c(\leg, \varepsilon_-, [b_2]) - \ln c(\leg,\varepsilon_+, [b_3]) = \ln h_2 - \ln h_1.
	\end{align*}
	We conclude that the length of the cobordism $L$ induced by K\'alm\'an's nontrivial isotopy is bounded below by $|\ln h_1 - \ln h_2|$.
\end{proof}

By choosing $h_1$ and $h_2$ appropriately, Proposition~\ref{prop:kalman} proves the second half of Theorem~\ref{thm:loops}:

\begin{cor}  For any $b > 0$, there exists a non-trivial loop $\gamma$ of Legendrian submanifolds with induced Lagrangian cobordism $L_\gamma$ such that the length of any Lagrangian cobordism in $[L_\gamma]$ is bounded below by $b$. 
\end{cor}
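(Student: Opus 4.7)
The plan is to derive the corollary essentially as a direct consequence of Proposition~\ref{prop:kalman} by exhibiting, for each $b>0$, a representative of K\'alm\'an's loop $\gamma$ whose base Legendrian trefoil has Reeb chord heights satisfying $|\ln h_1 - \ln h_2|\geq b$. Concretely, I would take the Lagrangian projection in Figure~\ref{fig:trefoil-lagr} and scale its $b$--chords so that, say, $h_1 = e^b$ and $h_2 = 1$ (or vice versa); then $|\ln h_1-\ln h_2|=b$, and Proposition~\ref{prop:kalman} gives $\ell(L')\geq b$ for every $L'\in[L_{\gamma^b}]$.

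The first step is to verify that a Legendrian trefoil with the desired Reeb chord heights actually exists. This reduces to checking that the pair of inequalities
\[
h(a_1) > 2h_1+h_2, \qquad h(a_2)+2h_1 > 2h(a_1)
\]
from \cite[Example 2.7]{kalman:mono1} can be satisfied with any prescribed positive $h_1,h_2$; this is immediate by choosing $h(a_1)$ slightly above $2h_1+h_2$ and $h(a_2)$ sufficiently large. (Alternatively, one may start from any convenient trefoil and apply the $yz$--scaling diffeomorphism of Proposition~\ref{prop:scaling} together with a suitable perturbation that adjusts only the ratio $h_1/h_2$ while leaving the isotopy type of K\'alm\'an's loop intact.)

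The second step is to realize K\'alm\'an's loop $\gamma^b$ starting at this chosen base Legendrian. The loop is defined by a prescribed sequence of Legendrian Reidemeister moves, and the combinatorial data used in the computation of $\phi_L$ in \cite[Example 3.6]{kalman:mono1} depends only on the isotopy class of the loop, not on the specific heights of Reeb chords. Consequently, the induced Lagrangian cobordism $L_{\gamma^b}$ exists and Proposition~\ref{prop:kalman} applies verbatim, producing the lower bound
\[
\ell(L') \;\geq\; |\ln h_1 - \ln h_2| \;=\; b
\]
for every $L'\in [L_{\gamma^b}]$.

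The only subtle point I would need to check carefully is that the computation of $\phi_L$, and hence of $\Psi_1^{L,\varepsilon_-}$, really is independent of the particular chord heights; this is essentially the statement that the cobordism maps associated to Reidemeister moves are dictated by the combinatorics of the isotopy. Modulo that routine verification, no further argument is required, and the corollary follows for the family $\{\gamma^b\}_{b>0}$.
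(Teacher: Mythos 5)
Your proposal is correct and takes essentially the same approach as the paper: the paper states the corollary follows immediately from Proposition~\ref{prop:kalman} ``by choosing $h_1$ and $h_2$ appropriately,'' which is exactly what you do. You usefully flesh out the two points the paper leaves implicit --- that a Legendrian trefoil with the prescribed ratio $h_1/h_2$ exists (via the inequalities from K\'alm\'an's Example~2.7) and that the combinatorial computation of $\phi_L$ is insensitive to the chord heights --- but these are elaborations of the same argument, not a different route.
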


The first half of Theorem~\ref{thm:loops} follows from the following proposition:

\begin{prop}  There exists a non-trivial loop $\gamma$ of  Legendrian $n$-spheres in $J^1\rr^n$ with induced Lagrangian cobordism $L_\gamma$ such that    $[L_\gamma]$ 
contains arbitrarily short cobordisms. 
\end{prop}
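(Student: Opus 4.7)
The approach is to realize the Sabloff--Sullivan loop $\gamma'$ from \cite{ss:pi-k} as a loop of \emph{strict} contactomorphisms of $J^1\rr^n$ and then apply essentially the same argument used for Proposition~\ref{prop:form-preserving-diffeo}. The key observation is that Proposition~\ref{prop:form-preserving-diffeo}, when applied to a loop (so that $\Phi_0 = \Phi_1 = \mathrm{id}$), produces arbitrarily short cobordisms from $\leg$ to $\leg$, and that by construction these cobordisms are precisely the ones induced by the loop via the trace construction of Lemma~\ref{lem:isotopy-cobord}.

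First, I would recall the Sabloff--Sullivan construction, which realizes a nontrivial element of $\pi_1$ of the space of Legendrian $n$-spheres in $J^1\rr^n$ by applying a suitable loop of base isometries to an appropriately chosen ``flying saucer'' Legendrian sphere. Any loop of orthogonal transformations $R_s \in SO(n)$ with $R_0 = R_1 = \mathrm{id}$ lifts to a loop of contactomorphisms of $J^1\rr^n$ via $(x,y,z) \mapsto (R_s x, R_s y, z)$, and a direct check shows that each such lift preserves the contact form $\alpha = dz - y \cdot dx$ exactly. Thus the loop $\gamma'$ is realized by a family $\Phi_s$ of strict contactomorphisms with $\Phi_0 = \Phi_1 = \mathrm{id}$, and the induced Legendrian loop is $\leg_s = \Phi_s(\leg)$ for a fixed base Legendrian sphere $\leg$.

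Next, I would apply the argument of Proposition~\ref{prop:form-preserving-diffeo} verbatim. Given $\epsilon > 0$, reparametrize the loop to be supported in $[0,\epsilon]$ and extend by the identity outside. The trace construction of Lemma~\ref{lem:isotopy-cobord} yields a Lagrangian immersion $\Gamma$ with primitive $e^s \eta(s,t)$. Since each $\Phi_s$ is a strict contactomorphism, the identity
\[
0 = \mathcal{L}_{X_s} \alpha = d(\alpha(X_s)) + \iota_{X_s} d\alpha
\]
combined with plugging in the Reeb vector field shows that $\eta(s,t)$ is constant along Reeb chords of $\leg_s$. Consequently $\eta(s,t) - \eta(s,t') = 0$ at the endpoints of every Reeb chord, while Reeb chord heights are positive, so the embedding hypothesis of Lemma~\ref{lem:reeb-cobord} holds. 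We therefore obtain an embedded exact Lagrangian cobordism $L_\epsilon$ from $\leg$ to $\leg$ of length at most $\epsilon$.

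Finally, I would verify that $L_\epsilon \in [L_{\gamma'}]$: by construction, $L_\epsilon$ is the Lagrangian cobordism induced by the loop $\gamma'$ after reparametrization of the $s$-variable, and such reparametrizations produce cobordisms that are compactly supported Lagrangian isotopic (just interpolate between the time parameters via a linear homotopy of smooth, monotone cutoff functions, noting that strict contactness is preserved throughout so that the embedding condition continues to hold along the homotopy). Since $\epsilon$ is arbitrary, $[L_{\gamma'}]$ contains arbitrarily short representatives. The main obstacle is the first step: pinpointing a specific nontrivial loop in $\pi_1$ of the space of Legendrian $n$-spheres from \cite{ss:pi-k} that is honestly realized by a loop of strict contactomorphisms; everything after that is a direct application of the machinery already developed in Section~\ref{sec:constructions}.
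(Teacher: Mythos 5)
Your approach is essentially the same as the paper's: the Sabloff--Sullivan loop is induced by a path of form-preserving (strict) contactomorphisms coming from base rotations of $\rr^n$, so the argument of Proposition~\ref{prop:form-preserving-diffeo} (the paper invokes its consequence, Corollary~\ref{cor:base-diffeo}) yields arbitrarily short embedded Lagrangian cobordisms. One small correction to your description: the Sabloff--Sullivan isotopy is not generated by a loop of rotations $R_s$ with $R_0=R_1=\mathrm{id}$; rather it comes from a \emph{path} of rotations from $\mathrm{id}$ to the rotation $R_\pi$ by $\pi$ in the $\mathbf x$-coordinates, with the base Legendrian $\leg$ chosen so that $R_\pi(\leg)=\leg$. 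This does not affect your argument, since Proposition~\ref{prop:form-preserving-diffeo} only requires that each $\Phi_s$ preserve $\alpha$, not that $\Phi_1 = \mathrm{id}$; one obtains a cobordism from $\leg$ to $\Phi_1(\leg)=\leg$, which is exactly what is needed.
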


\begin{figure}
\labellist
\small
\pinlabel $x$ [l] at 23 6
\pinlabel $z$ [b] at 7 23
\pinlabel $x_1$ [l] at 307 6
\pinlabel $x_2$ [b] at 292 23 
\endlabellist
\centerline{\includegraphics[width=5in]{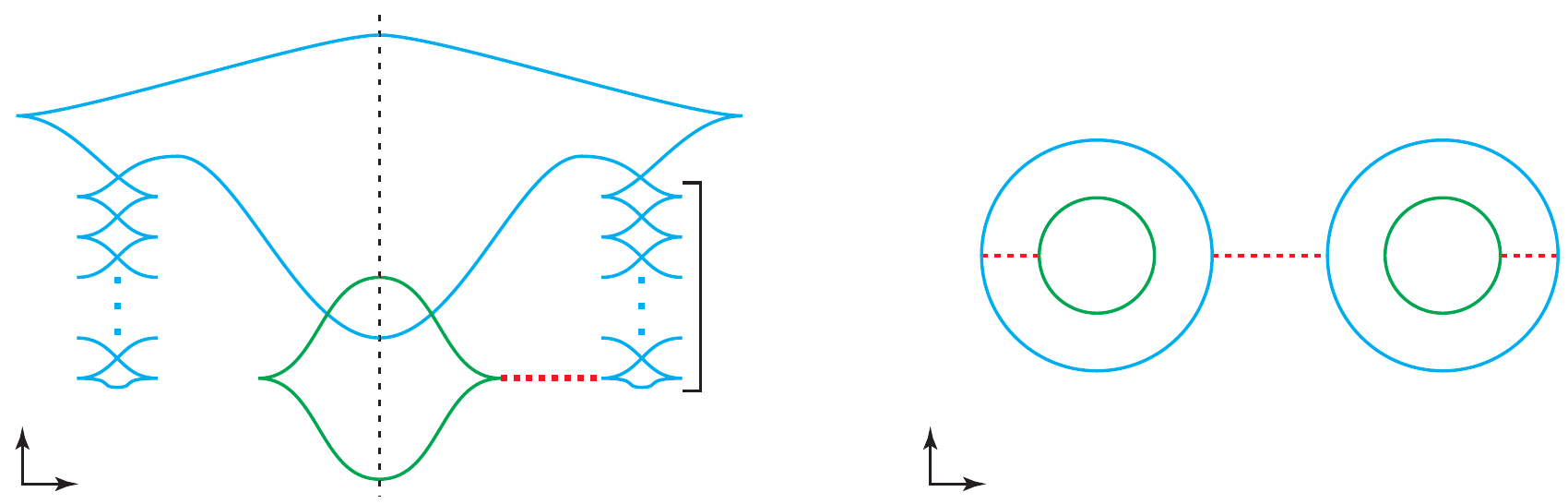} }
\caption{(a) A Legendrian sphere is obtained after spinning the diagram and then performing surgery along the dotted line. (b) Via an additional surgery, a connect sum of two copies
of this sphere
produces a sphere that has a front projection that is invariant under a  $180^{\circ}$ rotation of the $\mathbf x$-coordinates.
This rotation produces a non-trivial loop of Legendrian spheres \cite{ss:pi-k}.}
\label{fig:short-non-trivial-loop}
\end{figure}

\begin{proof} In \cite{ss:pi-k},
the first author and Sullivan constructed a non-contractible loop of Legendrian $n$-spheres in $J^1\rr^n$.   Briefly, the base point $\leg$ of the loop is constructed by first forming a sphere $\leg^r$ for $r \gg n$ using spinning and surgery; see Figure~\ref{fig:short-non-trivial-loop}(a) for a schematic picture.  Two copies of $\leg^r$, with one component shifted in the $x_1$ direction and rotated by $\pi$, are then joined together to form $\leg$ using a connect sum, as in Figure~\ref{fig:short-non-trivial-loop}(b).  The loop itself is simple to describe:  rotate the front projection of $\leg$ by $\pi$ in the $\mathbf{x}$ coordinates.  This rotation extends to a form-preserving diffeomorphism of $J^1 \rr^n$, so Corollary~\ref{cor:base-diffeo} implies that cobordisms induced by this isotopy may be arbitrarily short.
\end{proof}

\bibliographystyle{amsplain} 
\bibliography{main}

\end{document}